\newcommand{\N}{{\ensuremath{\mathbb{N}}}}
\newcommand{\Z}{{\ensuremath{\mathbb{Z}}}}
\newcommand{\C}{{\ensuremath{\mathbb{C}}}}
\def\BB{\mathrm{B}}
\def\KK{\mathrm{K}}
\def\MM{\mathrm{M}}
\def\H{\mathcal{H}}
\def\Z{\mathcal{Z}}
\def\S{\mathcal{S}}
\def\lm{\lambda}
\def\a{\alpha}
\def\ep{\varepsilon}
\def\CE{\mathcal{E}}
\def\P{\mathrm{Prim}}
\def\M{\mathrm{Max}}
\def\I{\mathrm{Id}}
\def\G{\mathrm{Glimm}}
\def\s{\mathop{\sigma}}
\def\di{\mathop{\mathrm{diag}}}
\def\int{\mathop{\mathrm{Int}}}
\def\cq{\mathrm{CQ}}
\newtheorem{proposition}{Proposition}[section]
\newtheorem{lemma}[proposition]{Lemma}
\newtheorem{theorem}[proposition]{Theorem}
\newtheorem{corollary}[proposition]{Corollary}
\theoremstyle{definition}
\newtheorem{remark}[proposition]{Remark}
\newtheorem{definition}[proposition]{Definition}
\newtheorem{example}[proposition]{Example}
\numberwithin{equation}{section}
\begin{document}

\title[]{The centre-quotient property and weak centrality for $C^*$-algebras}

\author{Robert J. Archbold}
\author{Ilja Gogi\'c}

\address{R.~J.~Archbold,  Institute of Mathematics, University of Aberdeen, King's College, Aberdeen AB24 3UE, Scotland, United Kingdom}
\email{r.archbold@abdn.ac.uk}

\address{I.~Gogi\'c, Department of Mathematics, Faculty of Science, University of Zagreb, Bijeni\v{c}ka 30, 10000 Zagreb, Croatia}
\email{ilja@math.hr}

\keywords{$C^*$-algebra, centre-quotient property, weak centrality, commutator}

\subjclass[2010]{Primary 46L05; Secondary 46L06}


\begin{abstract}
We give a number of equivalent conditions (including weak centrality) for a general $C^*$-algebra to have the centre-quotient property. We show that every $C^*$-algebra $A$ has a largest weakly central ideal $J_{wc}(A)$. For an ideal $I$ of a unital $C^*$-algebra $A$, we find a necessary and sufficient condition for a central element of $A/I$ to lift to a central element of $A$. This leads to a characterisation of the set $V_A$ of elements of an arbitrary $C^*$-algebra $A$ which prevent $A$ from having the centre-quotient property. The complement $\mathrm{CQ}(A):= A \setminus V_A$ always contains $Z(A)+J_{wc}(A)$ (where $Z(A)$ is the centre of $A$), with equality if and only if $A/J_{wc}(A)$ is abelian. Otherwise, $\mathrm{CQ}(A)$ fails spectacularly to be a $C^*$-subalgebra of $A$.
\end{abstract}

\maketitle

\section{Introduction}

Let $A$ be a $C^*$-algebra with centre $Z(A)$. If $I$ is a closed two-sided ideal of $A$, it is immediate that
\begin{equation}\label{cent}
(Z(A)+I)/I= q_I(Z(A)) \subseteq Z(A/I),
\end{equation}
where $q_I : A \to A/I$ is the canonical map. A $C^*$-algebra $A$ is said to have the \emph{centre-quotient property} (\cite{Vest}, \cite[Section~2.2]{ArcTh} and \cite[p.~2671]{ART}) if for any closed two-sided ideal $I$ of $A$, equality holds in (\ref{cent}). For the sake of brevity we shall usually refer to the centre-quotient property as the \emph{CQ-property}.

\smallskip

In 1971, Vesterstr\o m \cite{Vest} proved the following theorem.

\begin{theorem}[Vesterstr\o m]\label{VT}
If $A$ is a unital $C^*$-algebra, then the following conditions are equivalent:
\begin{itemize}
\item[(i)] $A$ has the CQ-property.
\item[(ii)] $A$ is weakly central, that is for any pair of maximal ideals  $M$ and $N$  of $A$,
$M \cap Z(A) =N \cap Z(A)$ implies $M=N$.
\end{itemize}
\end{theorem}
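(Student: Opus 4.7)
\emph{For $(i)\Rightarrow(ii)$} I would argue by contradiction via the Chinese remainder theorem. Suppose $M\neq N$ are maximal ideals of $A$ with $M\cap Z(A)=N\cap Z(A)$. Distinctness and maximality give $M+N=A$, so $A/(M\cap N)\cong A/M\oplus A/N$ as $C^*$-algebras. The idempotent $p:=(1_{A/M},0)$ is central in $A/(M\cap N)$, so the CQ-property lifts it to some $z\in Z(A)$ with $q_{M\cap N}(z)=p$. Then $z\in N$ and $1-z\in M$. But $z\in Z(A)\cap N = Z(A)\cap M$ forces $z\in M$, and hence $1=z+(1-z)\in M$, contradicting $M\neq A$.

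\emph{For $(ii)\Rightarrow(i)$} let $I$ be a closed two-sided ideal of $A$ and let $c+I\in Z(A/I)$; splitting into real and imaginary parts reduces to the case $c=c^*$. My plan is to translate the problem into the Dauns--Hofmann picture $Z(B)\cong C(\G(B))$ for unital $B$ and then invoke Tietze extension. A routine pull-back through $q_I(Z(A))\subseteq Z(A/I)$ shows that $A/I$ inherits weak centrality from $A$. For any weakly central unital $B$, each maximal ideal $M$ of $B$ corresponds to the unique point $t(M)\in\G(B)$ with $M\cap Z(B)=G_{t(M)}\cap Z(B)$, and the resulting bijection $\M(B)\to\G(B)$ is a homeomorphism (a continuous bijection from a compact space to a Hausdorff space). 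With these tools in hand, for each $M\in\M(A)$ containing $I$ the quotient $A/M$ is simple unital, so $Z(A/M)=\C$; thus $c\equiv\lambda(M)\cdot 1\pmod{M}$ for some real $\lambda(M)$, and $\lambda$ agrees with the continuous function representing $c+I$ under Dauns--Hofmann for $A/I$. The inclusion $\G(A/I)\hookrightarrow\G(A)$ induced by $\P(A/I)\subseteq\P(A)$ is a closed embedding of compact Hausdorff spaces, so Tietze yields a continuous extension $\tilde\lambda\in C(\G(A))$, which via Dauns--Hofmann corresponds to a self-adjoint $z\in Z(A)$. Every primitive $P\supseteq I$ is contained, by weak centrality, in a unique maximal ideal $M$ in the same Glimm class as $P$, and by construction $z+P=\tilde\lambda(t(P))\cdot 1=\lambda(M)\cdot 1=c+P$. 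Intersecting over such $P$ yields $z-c\in I$, as required.

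I expect the main obstacle to be the topological content of the second direction: the identification of $\G(B)$ with $\M(B)$ (in the Jacobson topology) as \emph{topological} spaces under weak centrality, together with the fact that $\G(A/I)$ embeds as a closed subspace of $\G(A)$. Both are standard consequences of the Dauns--Hofmann theory and the structure of the Glimm space, but they are the non-trivial inputs; once granted, the rest is bookkeeping with Tietze extension and Dauns--Hofmann.
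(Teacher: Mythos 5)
Your argument is correct and is essentially the paper's own: the paper states Theorem \ref{VT} as a citation of Vesterstr\o m, but the machinery it develops in Section \ref{S4} --- Theorem \ref{thmlift} (complete regularization map, compactness of $\P^I(A)$, Tietze extension, Dauns--Hofmann) together with the remark following Theorem \ref{thmcqel} --- recovers both directions exactly along the lines you propose, and your $(i)\Rightarrow(ii)$ is the standard Chinese-remainder argument. The one step to tighten is your claim that $\G(A/I)\hookrightarrow\G(A)$ is a closed embedding as a ``standard consequence'' of Glimm-space theory: injectivity of that map is \emph{not} automatic (its failure for general $I$ is precisely what Theorem \ref{thmlift} quantifies), and it holds here only because weak centrality forces any two $P_1,P_2\in\P^I(A)$ with $P_1\cap Z(A)=P_2\cap Z(A)$ under a common maximal ideal, whence $\Psi_{A/I}(c+I)$ takes equal values at $P_1/I$ and $P_2/I$. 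Since you supply exactly that observation later (uniqueness of the maximal ideal over each $P\supseteq I$ in its Glimm class), the proof is complete once this is moved up to justify the well-definedness and continuity of $\tilde\lambda$ on the compact image of $\P^I(A)$ in $\G(A)$.
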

Weakly central $C^*$-algebras were introduced by Misonou and Nakamura in \cite{MN,Mis} in the unital context. The most prominent examples of weakly central $C^*$-algebras $A$ are those satisfying the Dixmier property, that is for each $x \in A$ the closure of the convex hull of the unitary orbit of $x$ intersects $Z(A)$ \cite[p.~275]{Arc4}. In particular, von Neumann algebras are weakly central (see \cite[Th\'{e}or\`{e}me~7]{Dix} and \cite[Theorem~3]{Mis}). It was shown by Haagerup and Zsid\'o in \cite{HZ} that a unital simple $C^*$-algebra satisfies the Dixmier property if and only if it admits at most one tracial state. In particular, weak centrality does not imply the Dixmier property. However, in \cite{Mag} Magajna gave a characterisation of weak centrality in terms of averaging involving unital completely positive elementary operators. Recently, Robert, Tikuisis and the first-named author found the exact gap between
weak centrality and the Dixmier property for unital $C^*$-algebras \cite[Theorem~2.6]{ART} and showed that a postliminal $C^*$-algebra has the (singleton) Dixmier property if and only if it has the CQ-property \cite[Theorem~2.12]{ART}. Also, in a recent paper \cite{BG}, Bre\v sar and the second-named author studied an analogue of the CQ-property in a wider algebraic setting (so called `centrally stable algebras').

\smallskip

In this paper we study weak centrality, the CQ-property and several equivalent conditions for general $C^*$-algebras that are not necessarily unital. We then investigate the failure of weak centrality in two different ways. Firstly, we show that every $C^*$-algebra $A$ has a largest weakly central ideal $J_{wc}(A)$, which can be readily determined in several examples. Secondly, we study the set $V_A$ of individual elements of $A$ which prevent the weak centrality (or the CQ-property) of $A$. The set $V_A$ is contained in the complement of $J_{wc}(A)$ and, in certain cases, is somewhat smaller than one might expect. In the course of this, we address a fundamental lifting problem that is closely linked to the CQ-property: for a fixed ideal $I$ of a unital $C^*$-algebra $A$, we find a necessary and sufficient condition for a central element of $A/I$ to lift to a central element of $A$.

\smallskip

The paper is organised as follows. After some preliminaries in Section 2, the main results are obtained in Section \ref{S3} and Section \ref{S4}. In Section \ref{S3}, we study weak centrality and the CQ-property for arbitrary $C^*$-algebras. In the non-unital context, the appropriate maximal ideals for the definition of weak centrality are the modular maximal ideals (Definition \ref{defwc}). In Theorem \ref{chQC}, we give a number of conditions (including the CQ-property) that are equivalent to the weak centrality of a $C^*$-algebra $A$. In Theorem \ref{thmJcq}, we show that every $C^*$-algebra $A$ has a largest ideal $J_{wc}(A)$ that is weakly central. In doing so, we obtain a formula for $J_{wc}(A)$ in terms of the set $T_A$ of those modular maximal ideals of $A$ which witness the failure of the weak centrality of $A$. This formula leads easily to the explicit description of $J_{wc}(A)$  in a number of examples. For example, $J_{wc}(A) = \{0\}$ when either $A$ is the rotation algebra (the $C^*$-algebra of the discrete three-dimensional Heisenberg group, Example \ref{ex:Heis}) or $A=C^*(\mathbb{F}_2)$ (the full $C^*$-algebra of the free group on two generators, Example \ref{ex:freegroup}), and $J_{wc}(A) = \KK(\H)$ for Dixmier's classic example of a $C^*$-algebra in which the Dixmier property fails (Example \ref{ex:Dix}). We also obtain the stability of weak centrality and the CQ-property in the context of arbitrary $C^*$-tensor products (Theorem \ref{thm:tenprod}).

\smallskip

In Section \ref{S4}, we undertake the more difficult task of describing the individual elements which prevent a $C^*$-algebra $A$ from having the CQ-property. We say that an element $a\in A$ is a \emph{CQ-element} if for every closed two-sided ideal $I$ of $A$, $a+I \in Z(A/I)$ implies $a \in Z(A)+I$
(Definition \ref{def:cqel}). We denote by $\cq(A)$ the set of all CQ-elements of $A$.

Clearly, $A$ has the CQ-property if and only if $\cq(A)=A$ and the complement $V_A:= A\setminus \cq(A)$ is precisely the set of elements which prevent the CQ-property for $A$. For an ideal $I$ of a unital $C^*$-algebra $A$, we use the complete regularization map, the Tietze extension theorem and the Dauns-Hofmann theorem to obtain a necessary and sufficient condition for a central element of $A/I$ to lift to a central element of $A$ (Theorem \ref{thmlift}). This then leads to a description of $V_A$ (and hence $\cq(A)$) for an arbitrary $C^*$-algebra $A$ in terms of the subset $T_A$ (Theorem \ref{thmcqel}).  

We show that $\cq(A)$ contains $Z(A)+J_{wc}(A)$ (Corollary \ref{corsumsub}), all commutators $[a,b]$ ($a,b\in A$) and all products $ab$, $ba$ where $a\in A$ and $b$ is a quasi-nilpotent element of $A$ (Proposition \ref{prop:commutators}). It follows from this, together with a result of Pop \cite[Theorem~1]{Pop}, that if $A$ is not weakly central (so that $\cq(A)\neq A$), $\cq(A)$ contains the norm-closure $\overline{[A,A]}$ of $[A,A]$ (the linear span of all commutators in $A$) if and only if all quotients $A/M$ ($M \in T_A$) admit tracial states (Theorem \ref{thm:comtstates}). In particular, if $A$ is postliminal or an AF-algebra, then $\overline{[A,A]}\subseteq \cq(A)$ (Corollary \ref{cor:postAF}). On the other hand, if the tracial condition is not satisfied, then $\cq(A)$ does not even contain $[A,A]$ (Theorem \ref{thm:comtstates} (b)). 

Further, we show that for any $C^*$-algebra $A$ the following conditions are equivalent:
\begin{itemize}
\item[(i)] $A/J_{wc}(A)$ is abelian.
\item[(ii)] $\cq(A)=Z(A)+J_{wc}(A)$.
\item[(iii)] $\cq(A)$ is closed under addition.
\item[(iv)] $\cq(A)$ is closed under multiplication. 
\item[(v)] $\cq(A)$ is norm-closed.
\end{itemize}
(Theorem \ref{thm:cqequiv}). If $A$ is postliminal or an AF-algebra, then the conditions (i)-(v) are also equivalent to the condition 
\begin{itemize}
\item[(vi)] For any $x \in \cq(A)$, $x^n \in \cq(A)$ for all positive integers $n$.
\end{itemize}
(Corollary \ref{cor:postlim}). We also show that (vi) does not have to imply (i)-(v) for general (separable nuclear) $C^*$-algebras (Example \ref{ex:JiSutp}). The methods for these results involve the lifting of nilpotent elements, commutators and simple projectionless $C^*$-algebras.

We finish with an example of a separable continuous-trace $C^*$-algebra $A$ for which $J_{wc}(A)=Z(A)=\{0\}$ but $\cq(A)$ is norm-dense in $A$ (Example \ref{exRobTh}). In other words, although no non-zero ideal of $A$ has the CQ-property, the set $V_A$ of elements which prevent the CQ-property of $A$ has empty interior in $A$.

\section{Preliminaries}\label{prel}

Throughout this paper $A$ will be a $C^*$-algebras with the centre $Z(A)$.  By $\S(A)$ we denote the set of all states on $A$. As usual, if $x,y \in A$ then $[x,y]$ stands for the commutator $xy-yx$. If $A$ is non-unital, we denote the (minimal) unitization of $A$ by $A^\sharp$. If $A$ is unital we assume that $A^\sharp=A$.

By an ideal of $A$ we shall always mean a closed two-sided ideal. If $X$ is a subset of $A$, then $\I_A(X)$ denotes the  ideal of $A$ generated by $X$. An ideal $I$ of $A$ is said to be \emph{modular} if the quotient $A/I$ is unital. If 
$I$ is an ideal of $A$ then it is well-known that $Z(I)=I \cap Z(A)$.

The set of all primitive ideals of $A$ is denoted by $\P(A)$. As usual, we equip $\P(A)$ with the Jacobson topology. It is well-known that any proper modular ideal of $A$ (if such exists) is contained in some modular maximal ideal of $A$ (see e.g. \cite[Lemma~1.4.2]{Kan}) and that all modular maximal ideals of $A$ are primitive. We denote the set of all modular maximal ideals of $A$ by $\M(A)$, so that $\M(A) \subseteq \P(A)$. Note that $\M(A)$ can be empty (e.g. the algebra $A=\KK(\H)$ of compact operators on a separable infinite-dimensional Hilbert space $\H$). 

\begin{remark}\label{nmod}
Every non-modular primitive ideal of a $C^*$-algebra $A$ contains $Z(A)$. Indeed, if $z \in Z(A)$ then for $P \in \P(A)$, $z+P\in Z(A/P)$ and so is either zero or a multiple of the identity in $A/P$ if $A/P$ has one. Therefore, if $P$ is non-modular, we have $z \in P$ for all $z \in Z(A)$, so $Z(A)\subseteq P$.  In particular, if the set of all non-modular primitive ideals of $A$ is dense in $\P(A)$, then $Z(A)=\{0\}$.
\end{remark}

For any subset $S \subseteq \P(A)$ we define its kernel $\ker S$ as the intersection of all elements of $S$. For the case $S=\emptyset$, we define $\ker S=A$. Note that $S$ is closed in $\P(A)$ if and only if for any $P \in \P(A)$, $\ker S \subseteq P$ implies $P \in S$.

For any ideal $I$ of $A$ we define the following two subsets of $\P(A)$:
$$\P_I(A):=\{P \in \P(A) : \ I \not \subseteq P\} \quad \mbox{and} \quad \P^I(A):=\{P \in \P(A) : \ I \subseteq P\}.$$
Then $\P_I(A)$ is an open subset of $\P(A)$ homeomorphic to $\P(I)$ via the map $P \mapsto P\cap I$, while $\P^I(A)$ is a closed subset of $\P(A)$ homeomorphic to $\P(A/I)$ via the map $P \mapsto P/I$ (see e.g. \cite[Proposition~A.27]{RW}).
Similarly, we introduce the following subsets of $\M(A)$:
$$\M_I(A):=\{M  \in \M(A) : \ I \not \subseteq M\} \quad \mbox{and} \quad \M^I(A):=\{M \in \M(A) : \ I \subseteq M\}.$$

We shall frequently use the next simple fact which is probably well-known but as we have been unable to find a reference we include a proof for completeness. 

\begin{lemma}\label{lemmaxJ} Let $A$ be a $C^*$-algebra and let $I$ be an arbitrary ideal of $A$. 
Then the assignment  $M \mapsto M \cap I$ defines  a homeomorphism from the set $\M_I(A)$ onto the set $\M(I)$.

\end{lemma}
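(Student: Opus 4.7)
The plan is to piece the statement together from the known homeomorphism $\P_I(A) \to \P(I)$, $P \mapsto P \cap I$ (given by the paper as \cite[Proposition~A.27]{RW}), and then check that this map carries $\M_I(A)$ bijectively onto $\M(I)$. Granted the bijection, the homeomorphism claim is automatic: $\M(I)$ and $\M_I(A)$ inherit their topologies from $\P(I)$ and $\P_I(A)$ respectively, so the restriction of a homeomorphism to subsets with the corresponding subspace topologies is a homeomorphism.

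The easy direction is that $M \in \M_I(A) \Rightarrow M \cap I \in \M(I)$. Here I would use that $M$ is modular maximal and $I \not\subseteq M$, so the closed ideal $M + I$ properly contains $M$ and hence equals $A$. The second isomorphism theorem gives $I/(M\cap I) \cong (I+M)/M = A/M$, which is unital and simple because $M \in \M(A)$. Hence $M \cap I$ is modular and maximal in $I$.

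The harder direction is the converse: given $N \in \M(I)$, let $M$ be the unique primitive ideal of $A$ with $M \not\supseteq I$ and $M \cap I = N$; I must show $M \in \M(A)$. The idea is that the unit of the unital simple $C^*$-algebra $I/N$ transports, via the isomorphism $I/N \cong (I+M)/M$, to a non-zero projection $p$ in the ideal $(I+M)/M$ of the primitive $C^*$-algebra $B := A/M$. The key calculation is that $p$ is \emph{central} in $B$: for any $b \in B$, both $bp$ and $pb$ lie in the ideal $(I+M)/M$ on which $p$ is a two-sided unit, so $pbp = bp$ and $pbp = pb$, giving $bp = pb$. Setting $J_1 := pB$ and $J_2 := \{b \in B : pb = 0\}$, centrality of $p$ makes both $J_i$ closed two-sided ideals of $B$ with $J_1 J_2 = 0$. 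Since $B$ is primitive it is prime, and $J_1 \ni p \neq 0$ forces $J_2 = 0$; hence $p$ is a left (and symmetrically a right) identity for $B$, so $B$ is unital with $B = pB = (I+M)/M \cong I/N$. This last algebra is simple, so $M$ is modular and maximal, i.e.\ $M \in \M_I(A)$.

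The main obstacle is the centrality/primeness step in the reverse direction; everything else is either a standard isomorphism computation or a transport along the homeomorphism $\P_I(A) \cong \P(I)$. Once the bijection $\M_I(A) \leftrightarrow \M(I)$ has been established, I would close the argument with a single sentence observing that the Jacobson topologies on $\M_I(A)$ and $\M(I)$ are the subspace topologies from $\P_I(A)$ and $\P(I)$, so the restriction of the underlying homeomorphism is itself a homeomorphism.
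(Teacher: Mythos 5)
Your proposal is correct and follows essentially the same route as the paper: restrict the canonical homeomorphism $\P_I(A)\to\P(I)$, use the second isomorphism theorem $I/(M\cap I)\cong (I+M)/M$ for the forward inclusion, and for surjectivity show that $(I+M)/M$ is a unital ideal of the primitive algebra $A/M$ and hence all of it. The only difference is that you spell out in detail (via centrality of the unit $p$ and primeness) the step the paper states as immediate, namely that a unital ideal of a primitive $C^*$-algebra must be the whole algebra; that verification is correct.
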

\begin{proof}
For $M \in \M_I(A)$ set $\psi(M):=M \cap I$. 

Let $M \in \M_I(A)$. Since $I\nsubseteq M$, by maximality and modularity of $M$ we have $I+M=A$ and $A/M$ is a simple unital $C^*$-algebra. 
Using the canonical isomorphism $I/(M\cap I)\cong (I+M)/M=A/M$, 
we conclude that $I/(M\cap I)$ is also a simple unital $C^*$-algebra, so $\psi(M)=M \cap I \in \M(I)$. 

The injectivity of the map $\psi$ follows directly from the injectivity of the assignment $\P_I(A)\to \P(I)$, $P \mapsto P \cap I$, and the fact that all modular maximal ideals of $A$ are primitive.  

To show the surjectivity of $\psi$, choose an arbitrary $N \in \M(I)$. Then there is $M \in \P_I(A)$ such that
$N=M\cap I$. Since $I/N \cong (I+M)/M$, it follows that $(I+M)/M$ is a unital ideal of the primitive $C^*$-algebra $A/M$. This forces $(I+M)/M=A/M$, so $A/M$ is a unital simple $C^*$-algebra. Thus, $M \in \M_I(A)$.

Finally, since $\psi$ is a restriction of a canonical homeomorphism $\P_I(A)\to \P(I)$ on $\M_I(A)$ with the image $\M(I)$, it is itself a homeomorphism.
\end{proof}

\begin{remark}
It is also easy to see that for any ideal $I$ of a $C^*$-algebra $A$, the assignment $M \mapsto M/I$ defines a homeomorphism from the set $\M^I(A)$ onto the set $\M(A/I)$, but we shall not use this fact in this paper. 
\end{remark}

If $A$ is a $C^*$-algebra and $P$ a primitive ideal of $A$ such that $Z(A) \nsubseteq P$, then $P \cap Z(A)$ is a maximal ideal of $Z(A)$. In particular, if $A$ is unital, then the map 
$$\P(A) \to \M(Z(A)) \qquad \mbox{defined by} \qquad P \mapsto  P \cap Z(A)$$
if a well-defined continuous surjection. We shall continue to assume that $A$ is unital in this paragraph and the next. For all $P, Q \in \mathrm{Prim}(A)$ we define 
$$
P \approx Q \qquad  \mbox{if} \qquad P \cap Z(A)= Q \cap Z(A). 
$$
By the Dauns-Hofmann theorem \cite[Theorem~A.34]{RW}, there exists an isomorphism
$$\Psi_A : Z(A) \to C(\P(A)) \qquad \mbox{such that} \qquad z+P=\Psi_A(z)(P)1+P$$
for all $z \in Z(A)$ and $P \in \P(A)$ (note that $\P(A)$ is compact, as $A$ is unital \cite[II.6.5.7]{Bla}). Hence, for all $P,Q \in \P(A)$ we have 
$$P \approx Q \qquad \Longleftrightarrow \qquad f(P)=f(Q) \quad \mbox{for all } f \in C(\P(A)).$$  
Note that $\approx$ is an equivalence relation on $\P(A)$ and the equivalence
classes are closed subsets of $\P(A)$. It follows there is one-to-one correspondence between the quotient set $\mathrm{Prim}(A)/\approx$
and a set of ideals of $A$ given by
$$[P]_\approx \longleftrightarrow \ker [P]_\approx \quad (P \in \mathrm{Prim}(A)),$$
where $[P]_\approx$ denotes the equivalence class of $P$. The set of ideals obtained in this way is denoted by $\G(A)$, and its
elements are called \textit{Glimm ideals} of $A$. The quotient map 
$$\phi_A : \mathrm{Prim}(A) \to
\mathrm{Glimm}(A), \qquad \phi_A(P):=\ker [P]_\approx$$ is known as the \textit{complete regularization map}. 
We equip $\G(A)$ with the quotient topology, which coincides with the complete regularization topology, since $A$ is unital. In this way $\G(A)$ becomes a compact Hausdorff space. In fact, $\G(A)$ is homeomorphic to $\M(Z(A))$ via the assignment 
$G \mapsto G \cap Z(A)$ (see \cite{AS} for further details).

\begin{definition}\label{qcdef}
A $C^*$-algebra $A$ is said to be \textit{quasi-central} if no primitive ideal of $A$ contains $Z(A)$.
\end{definition}

Quasi-central $C^*$-algebras were introduced by Delaroche in \cite{Del2}. 
We have the following useful characterisation of quasi-central $C^*$-algebras.

\begin{proposition}\cite[Proposition~1]{Arc}\label{qcchar} Let $A$ be a $C^*$-algebra. The following
conditions are equivalent:
\begin{itemize}
\item[(i)] $A$ is quasi-central.
\item[(ii)] $A$ admits a central approximate unit, i.e. there exists an approximate unit $(e_\alpha)$ of $A$ such that $e_\alpha \in Z(A)$ for all $\alpha$.
\end{itemize}
\end{proposition}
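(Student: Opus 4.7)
\medskip

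\noindent\textbf{Proof proposal.}

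The easier direction is (ii)$\Rightarrow$(i). Suppose $(e_\alpha)$ is an approximate unit for $A$ contained in $Z(A)$, and assume for a contradiction that some primitive ideal $P$ of $A$ satisfies $Z(A)\subseteq P$. Then $e_\alpha \in P$ for every $\alpha$, and since $P$ is closed and $e_\alpha a \to a$ for every $a\in A$, we conclude $A \subseteq P$, contradicting that $P$ is proper.

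For (i)$\Rightarrow$(ii), the plan is to pass through the ideal generated by $Z(A)$. Let $I := \overline{\sp\{za : z\in Z(A),\ a \in A\}}$; this is a closed two-sided ideal of $A$, because the elements $z\in Z(A)$ commute with everything. If $I$ were a proper ideal, then the nonzero quotient $A/I$ would possess a primitive ideal, whose pre-image under $q_I$ is a primitive ideal of $A$ containing $I$ and hence containing $Z(A)$. This contradicts (i), so we must have $I = A$.

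Now let $(e_\alpha)$ be any approximate unit of the commutative $C^*$-algebra $Z(A)$, with $0\le e_\alpha \le 1$; each $e_\alpha$ lies in $Z(A)$. To show that $(e_\alpha)$ is an approximate unit of $A$, take $a \in A$ and $\varepsilon>0$. Since $I=A$, we may pick $x = \sum_{i=1}^{n} z_i a_i$ with $z_i \in Z(A)$, $a_i \in A$, and $\|a - x\| < \varepsilon/3$. For each $i$, $\|e_\alpha z_i - z_i\| \to 0$, so there is $\alpha_0$ such that for all $\alpha \geq \alpha_0$,
\[
\|e_\alpha x - x\| \le \sum_{i=1}^{n} \|e_\alpha z_i - z_i\|\,\|a_i\| < \varepsilon/3.
\]
Combining this with $\|e_\alpha\|\le 1$ gives $\|e_\alpha a - a\| \le \|e_\alpha(a-x)\| + \|e_\alpha x - x\| + \|x - a\| < \varepsilon$ for all $\alpha \geq \alpha_0$, completing the proof.

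The single conceptual step is the identification $\overline{Z(A)\cdot A} = A$ as the correct reformulation of quasi-centrality; once this is established, the construction of a central approximate unit is just a standard density-plus-bounded-operator argument. There is no serious obstacle beyond this, which is presumably why the result is a short proposition in \cite{Arc}.
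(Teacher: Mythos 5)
Your proof is correct. The paper does not reproduce a proof of this proposition --- it is quoted directly from \cite[Proposition~1]{Arc} --- but your argument is the standard one and is essentially the route the paper itself records in Remark \ref{remqcid}(a), namely that quasi-centrality is equivalent to $A=Z(A)A$; the only implicit step worth a word is that $Z(A)\subseteq I$ (e.g.\ because $z=\lim_\beta z u_\beta$ for any approximate unit $(u_\beta)$ of $A$, and each $zu_\beta\in I$), which you need in order to pass from ``a primitive ideal containing $I$'' to ``a primitive ideal containing $Z(A)$''.
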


\begin{remark}
It is easily seen from Proposition \ref{qcchar} (ii) that quasi-centrality passes to quotients and tensor products. 
\end{remark}

We have the following prominent examples of quasi-central $C^*$-algebras.
\begin{example}\label{exqc}
\begin{itemize}
\item[(a)] Every unital $C^*$-algebra is obviously quasi-central.
\item[(b)] Every abelian $C^*$-algebra is quasi-central. More generally, a $C^*$-algebra $A$ is said to be \textit{$n$-homogeneous} if all irreducible representations of $A$ have the same finite dimension $n$ (note that the abelian $C^*$-algebras are precisely $1$-homogeneous $C^*$-algebras). Then by \cite[Theorem~4.2]{Kap2} $\mathrm{Prim}(A)$ is a (locally compact) Hausdorff space and by a well-known theorem of Fell \cite[Theorem~3.2]{Fell} and Tomiyama-Takesaki \cite[Theorem~5]{TT} there is a locally trivial bundle $\CE $ over $\mathrm{Prim}(A)$ with fibre $\MM_n(\C)$ and structure group $\mathrm{Aut}(\MM_n(\C))\cong PU(n)$ (the projective unitary group) such that $A$ is isomorphic to the $C^*$-algebra $\Gamma_0(\CE )$ of continuous sections of $\CE $ that vanish at infinity. Using the local triviality of the underlying bundle $\CE $ one can now easily check that $A\cong \Gamma_0(\CE )$ is quasi-central (see also \cite[p.~236]{Kap2}).
\item[(c)] For a locally compact group $G$, the following conditions are equivalent:
\begin{itemize}
\item[(i)] The full group $C^*$-algebra $C^*(G)$ is quasi-central.
\item[(ii)] The reduced group $C^*$-algebra $C^*_{r}(G)$ is quasi-central.
\item[(iii)] $G$ is an SIN-group (that is, the identity has a base of neighbourhoods that are invariant under conjugation by elements of $G$).
\end{itemize}
(See \cite[Corollary~1.3]{Lo} and the remark which follows it.)
\end{itemize}
\end{example}

\begin{remark}\label{remqcid}
Let $A$ be an arbitrary $C^*$-algebra.
\begin{itemize}
\item[(a)] Using the Hewitt-Cohen factorization theorem (see e.g. \cite[Theorem~A.6.2]{BLM}) we have 
$$K_A:=\I_A(Z(A))=Z(A)A=\{za : \, z \in Z(A),\, a \in A\}$$
(finite sums are not needed). In particular, $A$ is quasi-central if and only if $A=Z(A)A$ (\cite[Proposition~3.2]{Gog}).
\item[(b)] The ideal $K_A$ is in fact the largest quasi-central ideal of $A$ \cite{Del2}. Indeed, $K_A$ contains $Z(A)$,  so $Z(K_A)= K_A\cap Z(A)=Z(A)$. Therefore, $Z(K_A)K_A=K_A$, so $K_A$ is quasi-central. On the other hand, if $K$ is an arbitrary quasi-central ideal of $A$, then (a) implies $K=Z(K)K$. Since $Z(K)=K \cap Z(A) \subseteq Z(A)$, $K \subseteq Z(A)A=K_A$.
\item[(c)] Since $P \in \P(A)$ contains $Z(A)$ if and only if $P$ contains $K_A$, it follows
$$K_A=\ker\{P \in \P(A) : \, Z(A) \subseteq P\}.$$ 
\item[(d)] If $A$ is quasi-central, then all primitive ideals of $A$ are modular. This follows directly from Remark \ref{nmod}.
\end{itemize}
\end{remark}

The following well-known example shows that the converse of Remark \ref{remqcid} (d) is not true in general. First recall that a $C^*$-algebra $A$ is called \emph{$n$-subhomogeneous} ($n \in \N$) if all irreducible representations of $A$ have dimension at most $n$ and $A$ also admits an $n$-dimensional irreducible representation.
\begin{example}\label{ex2subnqc}
Consider the $C^*$-algebra $A$ that consists of all continuous functions 
$f:[0,1]\to \MM_2(\C)$ such that $f(1)=\di(\lambda(f),0)$, for some scalar $\lambda(f)\in \C$. Since $A$ is $2$-subhomogeneous, all primitive ideals of $A$ are modular. On the other hand, $$Z(A)=\{\di(f,f) : \, f\in C([0,1]), \, f(1)=0\}$$ is contained in the kernel of the one-dimensional (hence irreducible) representation $\lambda : A \to \C$, defined by the assignment $\lambda : f \mapsto \lambda(f)$. Hence, $A$ is not quasi-central. In fact the largest quasi-central ideal $K_A$ of $A$ consists of all $f \in A$ such that $f(1)=0$, since the primitive ideals of this ideal have the form
$$\{f \in A: \, f(t)=0 \, \mbox{ and } \, f(1)=0\}$$
for $t \in [0,1)$.
\end{example}

\section{Characterisations of $C^*$-algebras with the CQ-property}\label{S3}

We begin this section with the following $C^*$-algebraic version of \cite[Proposition 2.1]{BG} in which for $a\in A$,
$$[a,A]:=\{[a,x] : \, x \in A\}.$$
Since the proof requires only obvious changes, we omit it.

\begin{proposition}\label{ldef}
Let $A$ be a $C^*$-algebra. The following conditions are equivalent:
\begin{enumerate}
\item[{\rm (i)}] $A$ has the CQ-property.
\item[{\rm (ii)}] For every $*$-epimorphism $\phi : A \to B$, where $B$ is another $C^*$-algebra,  $\phi(Z(A))=Z(B)$.
\item[{\rm (iii)}] For every $a\in A$, $a\in Z(A) +{\rm Id}_A([a,A])$.
\end{enumerate}
\end{proposition}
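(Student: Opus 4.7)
The plan is to prove the chain of implications (i) $\Rightarrow$ (ii) $\Rightarrow$ (iii) $\Rightarrow$ (i), using only the first isomorphism theorem for $C^*$-algebras and the characterisation of centrality via vanishing commutators.

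For (i) $\Rightarrow$ (ii), I would let $\phi : A \to B$ be a surjective $*$-homomorphism and put $I := \ker \phi$. The induced isomorphism $A/I \cong B$ identifies $q_I(Z(A)) = (Z(A) + I)/I$ with $\phi(Z(A))$ and identifies $Z(A/I)$ with $Z(B)$. The inclusion $\phi(Z(A)) \subseteq Z(B)$ is automatic from \eqref{cent}; the reverse inclusion is precisely the CQ-property applied to $I$. For (ii) $\Rightarrow$ (i), given an ideal $I$ of $A$, apply (ii) to the quotient map $q_I : A \to A/I$ to obtain $q_I(Z(A)) = Z(A/I)$, which is the equality in \eqref{cent}.

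For (i) $\Rightarrow$ (iii), fix $a \in A$ and set $I := \I_A([a,A])$. Then for every $x \in A$, $[a,x] \in I$, so $a + I$ commutes with every $q_I(x)$ in $A/I$, i.e.\ $a + I \in Z(A/I)$. The CQ-property gives $a + I \in (Z(A) + I)/I$, i.e.\ $a \in Z(A) + \I_A([a,A])$. For the converse (iii) $\Rightarrow$ (i), take an ideal $I$ of $A$ and an element $a \in A$ with $a + I \in Z(A/I)$. Then $[a,x] \in I$ for every $x \in A$, so $[a,A] \subseteq I$ and hence $\I_A([a,A]) \subseteq I$. Using (iii) we obtain
\[
a \in Z(A) + \I_A([a,A]) \subseteq Z(A) + I,
\]
which proves the non-trivial inclusion in \eqref{cent}.

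There is essentially no obstacle here beyond keeping the correspondence between ideals, quotients, and $*$-epimorphisms straight, together with the trivial observation that $a + I$ lies in $Z(A/I)$ precisely when $[a,A] \subseteq I$, equivalently $\I_A([a,A]) \subseteq I$. The only $C^*$-specific point (as opposed to the purely algebraic version in \cite{BG}) is that all ideals, quotients, and homomorphism images under consideration are automatically closed, so that $q_I$ is a well-defined $*$-epimorphism and $A/I$ is a $C^*$-algebra; once this is noted the argument from \cite[Proposition~2.1]{BG} transfers verbatim.
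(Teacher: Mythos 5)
Your argument is correct and is exactly the transfer of \cite[Proposition~2.1]{BG} that the paper invokes when it omits the proof; the equivalences (i)$\Leftrightarrow$(ii) and (i)$\Leftrightarrow$(iii) are established just as you describe, with the key observation that $a+I\in Z(A/I)$ iff $[a,A]\subseteq I$ iff $\I_A([a,A])\subseteq I$. The one $C^*$-specific point you rightly note is that $\I_A([a,A])$ denotes the \emph{closed} ideal generated by $[a,A]$, so all quotients involved are genuinely $C^*$-algebras.
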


The next fact was obtained in \cite[Lemma~2.2.3]{ArcTh} but we include the details here for completeness.

\begin{proposition}\label{idquCQ}
If a $C^*$-algebra $A$ has the CQ-property, so do all ideals and quotients of $A$.
\end{proposition}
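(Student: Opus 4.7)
The plan is to treat the two assertions separately, invoking the characterisations of the CQ-property from Proposition \ref{ldef}. The quotient case is handled by condition (ii), and the ideal case (which is genuinely more delicate) by condition (iii).

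For the quotient case, I would let $J$ be an ideal of $A$ and $\sigma : A/J \to C$ an arbitrary $*$-epimorphism onto a $C^*$-algebra $C$. Then $\sigma \circ q_J : A \to C$ is a $*$-epimorphism, so by the CQ-property of $A$ (in the form of Proposition \ref{ldef}(ii)) we have $(\sigma \circ q_J)(Z(A)) = Z(C)$. Another application of the same criterion to the $*$-epimorphism $q_J$ itself gives $q_J(Z(A)) = Z(A/J)$. Combining these, $\sigma(Z(A/J)) = Z(C)$, and a second appeal to Proposition \ref{ldef}(ii) establishes the CQ-property for $A/J$.

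For the ideal case, let $I$ be an ideal of $A$ and fix $a \in I$. By the CQ-property of $A$ and Proposition \ref{ldef}(iii), $a \in Z(A) + \I_A([a,A])$; the goal is to upgrade this to $a \in Z(I) + \I_I([a,I])$ and then quote (iii) for $I$. The main obstacle is converting the ideal generated in $A$ to the ideal generated in $I$: one needs (a) $[a,A] \subseteq \overline{[a,I]}$ and (b) $\I_A(S) = \I_I(S)$ whenever $S \subseteq I$. I would establish both using an approximate unit $(e_\lambda)$ of $I$: for (a), $[a,x] = \lim_\lambda [a, e_\lambda x e_\lambda]$ since $ae_\lambda \to a$ and $e_\lambda a \to a$, and $e_\lambda x e_\lambda \in I$; for (b), any generator $bsb'$ ($b,b' \in A$, $s \in S$) is approximated by $(be_\lambda) s (e_\lambda b')$, whose factors $be_\lambda, e_\lambda b'$ lie in $I$. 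Together these give $\I_A([a,A]) \subseteq \I_I([a,I]) \subseteq I$.

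With that in hand, write $a = z + y$ with $z \in Z(A)$ and $y \in \I_A([a,A]) \subseteq \I_I([a,I]) \subseteq I$. Then $z = a - y \in I$, so $z \in I \cap Z(A) = Z(I)$ (by the standard identity recalled in Section~\ref{prel}). Thus $a \in Z(I) + \I_I([a,I])$ for every $a \in I$, and Proposition \ref{ldef}(iii) yields the CQ-property for $I$. The only real work in the whole argument is the approximate-unit manipulation that localises the ideal-generation operation from $A$ to $I$; everything else is a direct transcription of the equivalent conditions.
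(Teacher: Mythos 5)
Your proof is correct, but the ideal half takes a genuinely different route from the paper's. For quotients you argue exactly as the paper does: compose $*$-epimorphisms and apply Proposition \ref{ldef}(ii) three times. For ideals, the paper avoids approximate units entirely: given an ideal $J$ of $I$, it notes that $J$ is an ideal of $A$, uses $Z(I/J)=(I/J)\cap Z(A/J)$ together with the CQ-property of $A$ to get $Z(I/J)=(I/J)\cap\bigl((Z(A)+J)/J\bigr)$, and then checks by a one-line computation (using $Z(I)=I\cap Z(A)$) that this set equals $(Z(I)+J)/J$. Your argument instead localises criterion (iii) of Proposition \ref{ldef} from $A$ to $I$ via an approximate unit, showing $\I_A([a,A])\subseteq \I_I([a,I])$ and then pulling the central summand $z$ into $Z(I)$. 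Both steps of your localisation are sound (for (a) one also needs $e_\lambda(xa)\to xa$ and $(ax)e_\lambda\to ax$, which hold since $ax, xa\in I$; and (b) follows even without approximate units from the standard fact that a closed ideal of $I$ is a closed ideal of $A$). What your approach buys is that it is precisely the technique the paper deploys later, in the proof of Proposition \ref{propcq}(c), to establish the stronger local statement $\cq(I)=I\cap\cq(A)$ — of which the ideal half of the present proposition is the special case $\cq(A)=A$; the paper's own proof here is shorter because it works with quotients globally rather than element by element.
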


\begin{proof}
Assume that $A$ has the CQ-property and let $I$ be an ideal of $A$.

If $J$ is an ideal of $I$, then $J$ is an ideal of $A$ and $I/J$ is an ideal of $A/J$. The CQ-property of $A$ implies
\begin{equation}\label{eqidqu}
Z(I/J)=(I/J) \cap Z(A/J)=(I/J) \cap ((Z(A)+J)/J).
\end{equation}
Let $a \in I$ such that $a+J \in (Z(A)+J)/J$. Then there is $z \in Z(A)$ such that $a-z \in J\subseteq I$, so
$z \in I \cap Z(A)=Z(I)$. It follows that
$$(I/J) \cap ((Z(A)+J)/J)=(Z(I)+J)/J,$$
so by (\ref{eqidqu}), $$Z(I/J)=(Z(I)+J)/J.$$ Therefore, $I$ has the CQ-property.

We now show that $A/I$ has the CQ-property. Let $q_I : A \to A/I$ be the canonical map
and $\phi : A/I \to B$ any $*$-epimorphism, where $B$ is another $C^*$-algebra.
Then $\phi \circ q_I : A \to B$ is a $*$-epimorphism, so the CQ-property of $A$ implies
$$Z(B)=Z((\phi\circ q_I)(A))=(\phi \circ q_I)(Z(A))=\phi(q_I(Z(A)))=\phi(Z(A/I)).$$
Therefore, $A/I$ has the CQ-property.
\end{proof}

\begin{proposition}\label{propzerocen}
For a $C^*$-algebra $A$ the following conditions are equivalent:
\begin{itemize}
\item[{\rm (i)}]  $Z(A) = \{0\}$ and $A$ has the CQ-property.
\item[{\rm (ii)}] Every primitive ideal of $A$ is non-modular. 
\end{itemize}
\end{proposition}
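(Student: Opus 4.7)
The plan is to prove both implications by leveraging Remark \ref{nmod} together with the elementary fact that $\bigcap \mathrm{Prim}(A) = \{0\}$, and by exploiting the natural bijection between primitive ideals of $A/I$ and primitive ideals of $A$ containing $I$.

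For (ii) $\Rightarrow$ (i), I would first observe that if every primitive ideal of $A$ is non-modular, then Remark \ref{nmod} forces $Z(A) \subseteq P$ for every $P \in \mathrm{Prim}(A)$, hence $Z(A) \subseteq \bigcap \mathrm{Prim}(A) = \{0\}$. To establish the CQ-property, let $I$ be an arbitrary ideal of $A$; since $Z(A) = \{0\}$, the inclusion in \eqref{cent} reads $\{0\} \subseteq Z(A/I)$, so I only need $Z(A/I) = \{0\}$. The primitive ideals of $A/I$ are exactly the quotients $P/I$ with $P \in \mathrm{Prim}(A)$, $I \subseteq P$, and the isomorphism $(A/I)/(P/I) \cong A/P$ shows that if $P$ is non-modular in $A$ then $P/I$ is non-modular in $A/I$. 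Applying the same argument (Remark \ref{nmod} plus $\bigcap \mathrm{Prim}(A/I) = \{0\}$) to $A/I$ then yields $Z(A/I) = \{0\}$, completing this direction.

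For (i) $\Rightarrow$ (ii), I would argue by contradiction: suppose $P$ is a modular primitive ideal of $A$. Then $A/P$ is unital, so its centre contains the identity and is in particular non-zero. On the other hand, the CQ-property applied to $I = P$ gives $Z(A/P) = (Z(A)+P)/P$, which equals $P/P = \{0\}$ because $Z(A) = \{0\}$ by hypothesis. This contradiction shows that no primitive ideal of $A$ can be modular.

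I do not anticipate a genuine obstacle here: the proof is essentially a direct application of Remark \ref{nmod} in both $A$ and its quotients, and the only point needing care is the transfer of non-modularity from $P \subset A$ to $P/I \subset A/I$, which is immediate from $(A/I)/(P/I) \cong A/P$.
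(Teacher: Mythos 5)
Your proof is correct and follows essentially the same route as the paper: both directions rest on Remark \ref{nmod} applied to $A$ and to its quotients, with (i) $\Rightarrow$ (ii) amounting to the observation that $Z(A/P)=(Z(A)+P)/P=\{0\}$ forces $P$ to be non-modular. The only cosmetic differences are your contradiction framing in that direction and your explicit spelling-out of the correspondence $P \mapsto P/I$ between $\P^I(A)$ and $\P(A/I)$, which the paper leaves implicit.
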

\begin{proof}
(i) $\Longrightarrow$ (ii). Assume that  $Z(A) = \{0\}$ and that $A$ has the CQ-property. Then for any $P \in \P(A)$ we have $Z(A/P)=(Z(A)+P)/P=\{0\}$, so $P$ is non-modular.

\smallskip 

(ii) $\Longrightarrow$ (i). Assume that all primitive ideals of $A$ are non-modular. By Remark \ref{nmod} $Z(A)=\{0\}$. Also, for any ideal $I$ of $A$, all primitive ideals of $A/I$ are non-modular, so Remark \ref{nmod} again implies $Z(A/I)=\{0\}$. Thus, $A$ has the CQ-property.
\end{proof}

The following result was obtained in \cite[Proposition~2.2.4]{ArcTh} but we give a shorter argument in one direction by using the method of \cite[Proposition~2.15]{BG}.

\begin{proposition}\label{propcqunit}
For a non-unital $C^*$-algebra $A$ the following conditions are equivalent:
\begin{itemize}
\item[(i)] $A$ has the CQ-property.
\item[(ii)] $A^\sharp$ has the CQ-property.
\end{itemize}
\end{proposition}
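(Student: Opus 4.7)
The plan is to prove (ii) $\Rightarrow$ (i) very quickly, then deduce (i) $\Rightarrow$ (ii) by translating the characterisation of Proposition \ref{ldef}(iii) from $A^\sharp$ back to $A$.

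For (ii) $\Rightarrow$ (i), I would simply observe that $A$ is an ideal of $A^\sharp$, so Proposition \ref{idquCQ} applies directly.

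For (i) $\Rightarrow$ (ii), the plan is to use the commutator formulation (iii) of Proposition \ref{ldef}. Fix $b \in A^\sharp$ and write $b = a + \lambda 1$ with $a \in A$, $\lambda \in \C$. Since $[\lambda 1, A^\sharp] = 0$ and $[a, 1] = 0$, a short computation gives $[b, A^\sharp] = [a, A^\sharp] = [a, A]$, where the last equality holds because $[a, A^\sharp] \subseteq [a, A] + [a, \C 1] = [a, A]$. Next I would verify that $\I_{A^\sharp}([a, A]) = \I_A([a, A])$: the right-hand side is contained in $A$, and any ideal of $A$ contained in the ideal $A$ of $A^\sharp$ is automatically closed under the action of the scalar $1 \in A^\sharp$, hence is already an ideal of $A^\sharp$.

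The remaining ingredient is the identification $Z(A^\sharp) = Z(A) \oplus \C 1$, which is straightforward: $\C 1 \subseteq Z(A^\sharp)$ and the sum is direct because $A$ is non-unital (so $\C 1 \cap A = \{0\}$); conversely, if $z + \mu 1$ commutes with every element of $A$, then so does $z$, forcing $z \in Z(A)$. Combining these pieces, the statement
\[
b \in Z(A^\sharp) + \I_{A^\sharp}([b, A^\sharp])
\]
becomes $a + \lambda 1 \in Z(A) + \C 1 + \I_A([a, A])$, and since $\I_A([a,A]) \subseteq A$ and $\C 1 \cap A = \{0\}$ (using non-unitality of $A$ once more), this reduces to $a \in Z(A) + \I_A([a, A])$. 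Thus Proposition \ref{ldef}(iii) for $A$ transfers exactly to Proposition \ref{ldef}(iii) for $A^\sharp$, completing the equivalence.

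There is no serious obstacle here; the only point requiring a bit of care is the bookkeeping of scalars — in particular making essential use of non-unitality of $A$ to ensure $\C 1$ can be cleanly absorbed into the centre term without creating a spurious scalar in $A$.
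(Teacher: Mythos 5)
Your proposal is correct and follows essentially the same route as the paper: the direction (ii)~$\Rightarrow$~(i) via Proposition~\ref{idquCQ} applied to the ideal $A$ of $A^\sharp$, and the direction (i)~$\Rightarrow$~(ii) via the commutator characterisation of Proposition~\ref{ldef}(iii) together with the identities $\I_{A^\sharp}([\lambda 1+a,A^\sharp])=\I_A([a,A])$ and $Z(A^\sharp)=\C 1+Z(A)$. The only difference is that you spell out the verification of these two identities, which the paper states without proof.
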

\begin{proof}
(i) $\Longrightarrow$ (ii). Suppose that $A$ has the CQ-property and let $\lambda 1 + a \in A^\sharp$, where $a \in A$ and $\lambda \in \C$. Then, by Proposition \ref{ldef}, we have $a\in Z(A)+{\rm Id}_A([a,A])$. Since 
$${\rm Id}_{A^\sharp}([\lambda 1+ a,A^\sharp])={\rm Id}_{A}([a,A]),$$
it follows that $a \in Z(A)+{\rm Id}_{A^\sharp}([\lambda 1+ a,A^\sharp])$. Since $Z(A^\sharp)=\C1+Z(A)$ we conclude that
$$\lambda 1+ a \in  Z(A^\sharp)+{\rm Id}_{A^\sharp}([\lambda 1 + a,A^\sharp]).$$ 
Therefore, by Proposition \ref{ldef}, $A^\sharp$ has the CQ-property. 

(ii) $\Longrightarrow$ (i). Since $A$ is an ideal of $A^\sharp$, this follows directly from Proposition \ref{idquCQ}.
\end{proof}

We now extend the notion of weak centrality to arbitrary $C^*$-algebras.
\begin{definition}\label{defwc}
We say that a $C^*$-algebra $A$ is \emph{weakly central} if the following two conditions are satisfied:
\begin{itemize}
\item[(a)] No modular maximal ideal of $A$ contains $Z(A)$.
\item[(b)] For each pair of modular maximal ideals $M_1$ and $M_2$ of $A$, $M_1 \cap Z(A)=M_2\cap Z(A)$ implies  $M_1=M_2$.   
\end{itemize}
\end{definition}
Note that if $A$ is unital then the above definition agrees with the standard notion of weak centrality.

\begin{remark}\label{remmod}
Since all modular maximal ideals of a $C^*$-algebra $A$ are primitive and since each modular primitive ideal of $A$ is contained in a modular maximal ideal of $A$, the condition (a) in Definition \ref{defwc} can be restated as: 
\begin{itemize}
\item[{\rm (a')}] No modular primitive ideal of $A$ contains $Z(A)$.
\end{itemize}
\end{remark}

The justification of Definition \ref{defwc} will be given in the following series of results. First consider one example.

\begin{example}
Let $X$ be a compact Hausdorff space, $\H$ a separable infinite-dimensional Hilbert space and $A:=C(X,\KK(\H))$. Then each primitive ideal of $A$ is of the form $P_t:=\{f \in A : \, f(t)=0\}$ for some $t \in X$. Since $A/P_t \cong \KK(\H)$ for all $t \in X$, all primitive ideals of $A$ are maximal and non-modular. It follows from Proposition \ref{propzerocen} that $Z(A)=\{0\}$ and $A$ has the CQ-property. Secondly, $\M(A)=\emptyset$ so that $A$ is trivially weakly central even though $P_t \cap Z(A) =\{0\}$ for all of the maximal ideals $P_t$. On the other hand, $A^\sharp$ can be identified with the $C^*$-subalgebra of $B:=C(X,\BB(\H))$ that consists of all functions $f \in B$ for which there exists a scalar $\lm$ such that $f(t)-\lm 1 \in \KK(\H)$ for all $t \in X$. Then $A$ is the unique maximal ideal of  $A^\sharp$ and hence $A^\sharp$ is weakly central.
\end{example}

\begin{proposition}\label{propwc}
For a non-unital $C^*$-algebra $A$ the following conditions are equivalent:
\begin{itemize}
\item[{\rm (i)}] $A$ is weakly central.
\item[{\rm (ii)}] $A^\sharp$ is weakly central. 
\end{itemize}
\end{proposition}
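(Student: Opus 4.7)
The plan is to exploit the explicit structure of the unitization. Write $A^\sharp = \C 1 \oplus A$ and $Z(A^\sharp) = \C 1 \oplus Z(A)$ as internal vector-space direct sums. The ideal $A$ is itself a modular maximal ideal of $A^\sharp$, and by Lemma \ref{lemmaxJ} the remaining modular maximal ideals $M' \in \M_A(A^\sharp)$ correspond bijectively to $\M(A)$ via $M' \mapsto M' \cap A$. Note that condition (a) of Definition \ref{defwc} is automatic for $A^\sharp$, since $1 \in Z(A^\sharp)$ lies in no proper ideal; so only condition (b) requires verification on the unitization side.

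The key bookkeeping device is, for each $M' \in \M_A(A^\sharp)$, the scalar-valued function $f_{M'} : Z(A) \to \C$ determined by $z + M' = f_{M'}(z) \cdot 1_{A^\sharp/M'}$, which is well-defined because the simple unital quotient $A^\sharp/M'$ has one-dimensional centre. A direct computation then yields
\[
M' \cap Z(A^\sharp) = \{-f_{M'}(z) \cdot 1 + z : z \in Z(A)\}, \qquad A \cap Z(A^\sharp) = Z(A).
\]
Moreover $f_{M'}$ is a $*$-homomorphism whose kernel equals $(M' \cap A) \cap Z(A)$; it vanishes identically precisely when $Z(A) \subseteq M' \cap A$, and when nonzero is a character of $Z(A)$ determined by its kernel (as $Z(A)/\ker f_{M'}$ is one-dimensional).

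For the direction (ii) $\Rightarrow$ (i), assume $A^\sharp$ is weakly central. If some $M \in \M(A)$ contained $Z(A)$, the corresponding $M' \in \M_A(A^\sharp)$ would have $f_{M'} = 0$, giving $M' \cap Z(A^\sharp) = Z(A) = A \cap Z(A^\sharp)$ in spite of $M' \neq A$, contradicting weak centrality of $A^\sharp$; this yields (a) for $A$. For (b), given $M_1, M_2 \in \M(A)$ with $M_1 \cap Z(A) = M_2 \cap Z(A)$, the (now nonzero) characters $f_{M'_1}, f_{M'_2}$ have equal kernels and are therefore equal, so $M'_1 \cap Z(A^\sharp) = M'_2 \cap Z(A^\sharp)$, whence $M'_1 = M'_2$ and $M_1 = M_2$.

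For the converse (i) $\Rightarrow$ (ii), let $M'_1, M'_2 \in \M(A^\sharp)$ satisfy $M'_1 \cap Z(A^\sharp) = M'_2 \cap Z(A^\sharp)$. If exactly one, say $M'_1$, equals $A$, then $Z(A) = M'_1 \cap Z(A^\sharp) \subseteq M'_2$, forcing $Z(A) \subseteq M'_2 \cap A \in \M(A)$ and violating (a) for $A$. If both differ from $A$, the direct-sum decomposition $\C 1 \oplus Z(A)$ turns the hypothesis into $f_{M'_1} = f_{M'_2}$, hence $M_1 \cap Z(A) = M_2 \cap Z(A)$, and (b) for $A$ delivers $M_1 = M_2$. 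The main obstacle I anticipate is purely organisational: the careful setup of the map $f_{M'}$ and the casework on whether $M'_j$ equals $A$; once those are in place the proof reduces to a linear-algebra rearrangement on $\C 1 \oplus Z(A)$.
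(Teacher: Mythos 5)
Your argument is correct, and its overall skeleton is the same as the paper's: both proofs use Lemma \ref{lemmaxJ} to identify $\M(A)$ with $\M(A^\sharp)\setminus\{A\}$ via $M'\mapsto M'\cap A$, note that condition (a) of Definition \ref{defwc} is automatic for the unital algebra $A^\sharp$, and then run a case analysis on whether the maximal ideals in play equal $A$. Where you genuinely differ is in the mechanism for passing between $M'\cap Z(A^\sharp)$ and $(M'\cap A)\cap Z(A)$, which is the only nontrivial step. The paper handles it by a second application of Lemma \ref{lemmaxJ}, now to the ideal $Z(A)$ of the commutative algebra $Z(A^\sharp)$: the maximal ideals $N_1\cap Z(A^\sharp)$ and $N_2\cap Z(A^\sharp)$ do not contain $Z(A)$ and have equal intersection with it, hence coincide. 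You instead introduce the character $f_{M'}$ and the explicit formula $M'\cap Z(A^\sharp)=\{-f_{M'}(z)1+z:\ z\in Z(A)\}$, reducing that step to the fact that a nonzero character of $Z(A)$ is determined by its kernel; this is the same Gelfand-duality fact in computational clothing. Each of your intermediate claims (well-definedness of $f_{M'}$ since $A^\sharp/M'$ is simple and unital, the displayed formula, $\ker f_{M'}=(M'\cap A)\cap Z(A)$, and the two case analyses) checks out, so the proof is complete; the trade-off is only that you spend a few lines setting up $f_{M'}$ where the paper reuses a lemma it already has, while your version makes the bookkeeping on $\C 1\oplus Z(A)$ completely explicit.
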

\begin{proof}
(i) $\Longrightarrow$ (ii). Assume that $A$ is weakly central and let $M_1,M_2 \in \M(A^\sharp)$ such that $M_1 \cap Z(A^\sharp)=M_2 \cap Z(A^\sharp)$.

If one of $M_1$ or $M_2$ is $A$, so is the other. Indeed, assume for example that  $M_1=A$. If $M_2 \neq A$ then, by Lemma \ref{lemmaxJ}, $M_2 \cap A$ is a modular maximal ideal of $A$. We have
       $Z(A) = A \cap Z(A^{\sharp}) = M_2 \cap Z(A^{\sharp})$
and so $Z(A)$ is contained in $M_2 \cap A$, contradicting the weak centrality of $A$.

Therefore assume that both $M_1$ and $M_2$ are not $A$. Again, by Lemma  \ref{lemmaxJ}, $N_1:= M_1 \cap A$ and $N_2:=M_2 \cap
A$ are modular maximal ideals of $A$ such that
$N_1 \cap Z(A) =N_2 \cap Z(A)$. The weak centrality of $A$ forces $N_1=N_2$, so Lemma \ref{lemmaxJ} implies $M_1=M_2$. Hence, $A^\sharp$ is weakly central.

\smallskip

(ii) $\Longrightarrow$ (i). Suppose that $A^{\sharp}$ is weakly central.
Let $M \in \M(A)$. By Lemma \ref{lemmaxJ}, there exists $N \in \M(A^{\sharp})$ such that $M = N \cap A$. Since $N \neq A$, it follows that
 $$N \cap Z(A^{\sharp}) \neq A \cap Z(A^{\sharp}) = Z(A).$$
But $Z(A)$ is a maximal ideal in $Z(A^{\sharp})$. Thus $Z(A)$ is not contained in
$N$ and consequently neither in $M$.
    
Now suppose that $M_1, M_2 \in \M(A)$ and $M_1 \cap Z(A) = M_2 \cap Z(A)$. By Lemma \ref{lemmaxJ}, there exist $N_1, N_2 \in \M(A^{\sharp})$ such that $M_1 = N_1 \cap A$ and $M_2 = N_2 \cap A$.  We have
$$(N_1 \cap Z(A^{\sharp})) \cap Z(A) = M_1 \cap Z(A) = (N_2 \cap Z(A^{\sharp})) \cap Z(A).
$$
By the previous paragraph, $M_1$ and $M_2$ do not contain $Z(A)$. It follows that the maximal ideals $N_1 \cap Z(A^{\sharp})$ and $N_2 \cap Z(A^{\sharp})$ of $Z(A^{\sharp})$ do not contain $Z(A)$ and hence must be equal by Lemma \ref{lemmaxJ} (applied to the ideal $Z(A)$ of $Z(A^{\sharp})$). By the weak centrality of $A^{\sharp}$, we have $N_1= N_2$ and hence $M_1 = M_2$. Thus $A$ is weakly central.
\end{proof}

As a direct consequence of Vesterstr\o m's theorem (Theorem \ref{VT}) and Propositions \ref{propwc} and \ref{propcqunit} we get the next characterisation.
\begin{corollary}\label{corCQWC}
For a $C^*$-algebra $A$ the following conditions are equivalent:
\begin{itemize}
\item[{\rm (i)}] $A$ has the CQ-property.
\item[{\rm (ii)}] $A$ is weakly central.
\end{itemize}
\end{corollary}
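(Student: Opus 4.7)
My plan is to chain together the three results already in hand. The statement involves arbitrary $C^*$-algebras, but Vesterstr\o m's theorem gives the equivalence only in the unital case, so the strategy is to split on unitality and, in the non-unital case, transport both properties up to the minimal unitization.

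If $A$ is unital, then Definition \ref{defwc} reduces to the classical notion of weak centrality (every modular maximal ideal is just a maximal ideal of the unital algebra, and $Z(A)$ is not contained in any proper ideal since $1\in Z(A)$), so Theorem \ref{VT} gives the equivalence (i) $\Leftrightarrow$ (ii) directly.

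If $A$ is non-unital, I would argue via the unitization $A^\sharp$. Proposition \ref{propcqunit} yields that $A$ has the CQ-property if and only if $A^\sharp$ has the CQ-property. Since $A^\sharp$ is unital, Vesterstr\o m's theorem then says this is equivalent to $A^\sharp$ being weakly central. Finally, Proposition \ref{propwc} transfers weak centrality back down, giving that $A^\sharp$ is weakly central if and only if $A$ is weakly central. Composing these three equivalences completes the non-unital case.

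There is no real obstacle here: all of the technical content (the definition of weak centrality in the non-unital setting, the behaviour of modular maximal ideals under passage to $A^\sharp$ via Lemma \ref{lemmaxJ}, and the CQ/weak-centrality correspondences between $A$ and $A^\sharp$) has already been established earlier in the section. The proof is therefore just a three-step chain of biconditionals, and the write-up can be a couple of lines.
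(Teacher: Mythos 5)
Your proposal is correct and matches the paper exactly: the paper derives this corollary precisely as a direct consequence of Vesterstr\o m's theorem (Theorem \ref{VT}) together with Propositions \ref{propwc} and \ref{propcqunit}, passing through the unitization $A^\sharp$ in the non-unital case. Your observation that Definition \ref{defwc} collapses to the classical notion in the unital case (since $1\in Z(A)$ rules out condition (a) failing) is also the paper's implicit justification, so nothing is missing.
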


\begin{remark}\label{remclosed}
An immediate consequence of Proposition \ref{idquCQ} and Corollary \ref{corCQWC} is that the class of weakly central $C^*$-algebras is closed under forming ideals and quotients. 
\end{remark}

The next simple fact follows directly from Corollary \ref{corCQWC}, Remark \ref{remqcid} (d) and Remark \ref{remmod}.

\begin{proposition}\label{propmodid}
For a $C^*$-algebra $A$ the following conditions are equivalent:
\begin{itemize}
\item[{\rm (i)}] $A$ is quasi-central and weakly central.
\item[{\rm (ii)}] $A$ has the CQ-property and every primitive ideal of $A$ is modular.
\end{itemize}
\end{proposition}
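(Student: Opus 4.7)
The plan is to use Corollary \ref{corCQWC} as the bridge between weak centrality and the CQ-property, so that the argument reduces to comparing quasi-centrality with the condition ``every primitive ideal is modular''.

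For (i)$\Rightarrow$(ii), I would observe that weak centrality gives the CQ-property immediately by Corollary \ref{corCQWC}. It remains to show that every primitive ideal of $A$ is modular, which is precisely the content of Remark \ref{remqcid}(d) applied to the quasi-central algebra $A$.

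For (ii)$\Rightarrow$(i), the CQ-property yields weak centrality by Corollary \ref{corCQWC}, so the real task is to deduce quasi-centrality from the combination of the CQ-property and the modularity of every primitive ideal. Here I would invoke Remark \ref{remmod}: condition (a) of Definition \ref{defwc} can be restated as (a'), namely that no modular primitive ideal of $A$ contains $Z(A)$. Under our hypothesis every primitive ideal is modular, so (a') simply says no primitive ideal of $A$ contains $Z(A)$, which is exactly the definition of quasi-centrality. Alternatively, and more concretely, for any $P\in\P(A)$ the modularity of $P$ makes $A/P$ unital, so $Z(A/P)\ni 1_{A/P}\neq 0$; the CQ-property then forces $(Z(A)+P)/P=Z(A/P)\neq\{0\}$, i.e.\ $Z(A)\not\subseteq P$.

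There is no substantive obstacle: both implications are essentially bookkeeping assembled from Corollary \ref{corCQWC}, Remark \ref{remqcid}(d) and Remark \ref{remmod}, which is why the statement is described as a direct consequence in the text.
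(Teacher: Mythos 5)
Your proof is correct and follows exactly the route the paper intends: the paper itself states that Proposition \ref{propmodid} ``follows directly from Corollary \ref{corCQWC}, Remark \ref{remqcid} (d) and Remark \ref{remmod}'', which are precisely the three ingredients you assemble. The additional concrete argument you give for deducing quasi-centrality from (ii) (using that $Z(A/P)\neq\{0\}$ for modular $P$ and the CQ-property) is a valid alternative to the citation of Remark \ref{remmod}, but it does not change the substance of the approach.
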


Part (b) of the next result overlaps with \cite[Corollary 2.13]{ART}, but the proof here avoids the use of a composition series.

\begin{corollary}\label{cor:postDix}
Let $A$ be a postliminal $C^*$-algebra.
\begin{itemize}
\item[(a)] $A$ has the (singleton) Dixmier property if and only if $A$ is weakly central.
\item[(b)] If every irreducible representation of $A$ is infinite-dimensional, then $A$ has the CQ-property and the (singleton) Dixmier property and is weakly central.
\end{itemize}
\end{corollary}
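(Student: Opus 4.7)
The plan is to reduce both parts to results already stated in the excerpt together with one external input, namely the equivalence between the CQ-property and the (singleton) Dixmier property for postliminal $C^*$-algebras established in \cite[Theorem~2.12]{ART} (cited in the introduction). The bridge to that result is Corollary \ref{corCQWC}, which identifies the CQ-property with weak centrality for arbitrary $C^*$-algebras.

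For part (a), the forward direction (Dixmier implies weak centrality) is essentially folklore and can be extracted by noting that the Dixmier property forces the CQ-property: if $x + I \in Z(A/I)$ then averaging $x$ by unitaries (from $A^{\sharp}$) produces a net in the convex hull whose image in $A/I$ remains centrally fixed and whose limit in $A$ lies in $Z(A)$, giving $x \in Z(A) + I$. Then Corollary \ref{corCQWC} gives weak centrality. For the converse, postliminality is crucial: if $A$ is weakly central then by Corollary \ref{corCQWC} it has the CQ-property, and the cited \cite[Theorem~2.12]{ART} then produces the singleton Dixmier property.

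For part (b), I would first argue that every primitive ideal of $A$ is non-modular. This uses the standard fact that when $A$ is postliminal and $P \in \P(A)$, the quotient $A/P$ is primitive and postliminal, and hence (by the Gelfand-Naimark description of postliminal primitive $C^*$-algebras) is isomorphic to $\KK(\H_P)$ for some Hilbert space $\H_P$, where $\H_P$ is the space of the associated irreducible representation. Under the hypothesis that every irreducible representation of $A$ is infinite-dimensional, $\H_P$ is infinite-dimensional, so $A/P \cong \KK(\H_P)$ is non-unital; thus $P$ is non-modular. Proposition \ref{propzerocen} then yields $Z(A)=\{0\}$ and the CQ-property simultaneously. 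Corollary \ref{corCQWC} gives weak centrality, and part (a) (applicable because $A$ is postliminal) gives the singleton Dixmier property.

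The only real content to verify is the identification $A/P \cong \KK(\H_P)$ for postliminal $A$, which I would cite from a standard reference (e.g. Dixmier's treatise) rather than reprove. No substantial obstacle is expected: once this identification is in place, both parts are short diagram-chases through Corollary \ref{corCQWC}, Proposition \ref{propzerocen}, and the result of \cite{ART}.
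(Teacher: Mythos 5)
Part (a) of your proposal is essentially the paper's argument: both reduce to \cite[Theorem~2.12]{ART} (CQ-property $\Leftrightarrow$ singleton Dixmier property for postliminal algebras) combined with Corollary \ref{corCQWC}. Your hands-on verification that the Dixmier property implies the CQ-property is correct (each unitary conjugate of $x$ is congruent to $x$ modulo $I$ when $x+I$ is central, so the central element supplied by the Dixmier property lies in $x+I$), but it is not needed, since the cited theorem is already an equivalence.

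Part (b), however, rests on a false lemma. You claim that for a postliminal $A$ and $P\in\P(A)$ one has $A/P\cong\KK(\H_P)$. That identification characterises \emph{liminal} algebras; for a merely postliminal algebra one only knows $\pi(A)\supseteq\KK(\H_\pi)$, and the containment can be strict. The Toeplitz algebra $T$ is postliminal and primitive, yet $T/\{0\}=T$ is unital and certainly not $\KK(\ell^2)$; and the failure persists under the hypothesis of (b), since $\KK(\H)\otimes T$ is postliminal with all irreducible representations infinite-dimensional, while its zero ideal is primitive with quotient containing the non-compact projection $p\otimes 1_T$. So your justification of the key step ``every primitive ideal is non-modular'' does not go through (in these examples the zero ideal happens to be non-modular anyway, but not for the reason you give). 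The paper's route avoids the problem: if some primitive ideal of $A$ were modular it would be contained in a modular \emph{maximal} ideal $M$ (Remark \ref{remmod}), and $A/M$ is then a unital \emph{simple} postliminal $C^*$-algebra, hence isomorphic to $\MM_n(\C)$ by Remark \ref{rem:postlimmax} --- it is simplicity, not primitivity, that forces the quotient to be elementary. This produces a finite-dimensional irreducible representation of $A$, contradicting the hypothesis; Proposition \ref{propzerocen} and Corollary \ref{corCQWC} then finish as you say.
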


Before proving Corollary \ref{cor:postDix} we record the next simple fact which will be also used in Section \ref{S4}.
\begin{remark}\label{rem:postlimmax}
Let $A$ be a postliminal $C^*$-algebra. If $\M(A)\neq \emptyset$, then for each $M \in \M(A)$, $A/M$ is a unital simple postliminal $C^*$-algebra and  thus $A/M\cong \MM_n(\C)$ for some $n \in \N$.
\end{remark}

\begin{proof}[Proof of Corollary \ref{cor:postDix}]
(a) By  \cite[Theorem~2.12]{ART} a postliminal $C^*$-algebra $A$ has the (singleton) Dixmier property if and only if it has the CQ-property. It remains to apply Corollary \ref{corCQWC}.

(b) Assume that $A$ contains a modular maximal ideal $M$. By Remark \ref{rem:postlimmax} $A/M \cong \MM_n(\C)$ for some $n\in \N$. Therefore, $A$ has a finite-dimensional irreducible representation; a contradiction. Thus all primitive ideals of $A$ are non-modular (Remark \ref{remmod}), so by Proposition \ref{propzerocen} $A$ has the CQ-property. The other properties follow from \cite[Theorem~2.12]{ART} and Corollary \ref{corCQWC}.
\end{proof}

\smallskip

For the main results of this section (Theorems \ref{chQC} and \ref{thmJcq}), we shall need to consider the following subsets of $\M(A)$ for an arbitrary $C^*$-algebra $A$:
\begin{itemize}
\item[-] $T^1_A$ as the set of all $M \in \M(A)$ such that $Z(A)\subseteq M$.
\item[-] $T^2_A$ as the set of all $M \in \M(A)$ for which exists $N \in \M(A)$ such that $M\neq N$, $Z(A)\nsubseteq M, N$ and $M \cap Z(A)=N \cap Z(A)$.
\item[-] $T_A:=T_A^1 \cup T_A^2$. 
\end{itemize}

The set $T^1_A$ is obviously closed in $\M(A)$. The next example shows that this is not generally true for the set $T_A^2$ (and consequently $T_A$).

\begin{example}\label{ex2sub2}
Let
$A$ be the $C^*$-algebra consisting of all functions $a \in C([0,1],\MM_2(\C))$ which are diagonal at $1/n$ for all $n\in \N$ and scalar at zero. Then $A$ is a unital $2$-subhomogeneous $C^*$-algebra, so $\M(A)=\P(A)$ and
$$T_A=T_A^2=\{P \in \P(A):\, \exists Q \in \P(A), \, P\neq Q,\, P\cap Z(A)=Q\cap Z(A)\}.$$
Let $\lambda_n(a),$
$\mu_n(a)$ and $\eta(a)$ be complex numbers such that 
$$a(1/n)=\di(\lambda_n(a),\mu_n(a)) \ \ (n \in \N) \quad \mbox{and} \quad a(0)=\di(\eta(a),\eta(a)).$$  
If we denote by $\lambda_n$, $\mu_n$ and  $\eta$ the $1$-dimensional (irreducible) representations of $A$ defined respectively by the assignments $a \mapsto \lambda_n(a)$, $a \mapsto \mu_n(a)$ and $a \mapsto \eta(a)$, it is easy to verify that
$$T_A=\{\ker \lambda_n : \, n \in \N\}\cup \{\ker \mu_n : \, n \in \N\}.$$
Then $\ker T_A$  consists of all functions in $A$ which vanish at $1/n$ $(n\in \N)$, and hence vanish at $0$ too.
Therefore, $\ker \eta \in \overline{T_A} \setminus  T_A$, so $T_A$ is not closed in $\M(A)=\P(A)$.
\end{example}

\begin{lemma}\label{lem:cqcont}
If $A$ is a $C^*$-algebra, then $\ker T_A$ contains any weakly central ideal of $A$.
\end{lemma}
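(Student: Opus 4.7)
The plan is a proof by contradiction: I assume $I$ is a weakly central ideal of $A$ that is not contained in $\ker T_A$ and derive a violation of either condition (a) or condition (b) of Definition \ref{defwc} (applied to the ideal $I$ itself). Concretely, pick $M \in T_A$ with $I \nsubseteq M$; then Lemma \ref{lemmaxJ} furnishes the modular maximal ideal $M \cap I \in \M(I)$, which is the object to play off against the weak centrality of $I$. Throughout I will use the identity $Z(I) = I \cap Z(A)$ without further mention.

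If $M \in T_A^1$, the contradiction is immediate: $Z(A) \subseteq M$ gives $Z(I) = I \cap Z(A) \subseteq M \cap I$, contradicting condition (a) for $I$. The more interesting case is $M \in T_A^2$, where I invoke a witness $N \in \M(A)$ with $N \neq M$, $Z(A) \nsubseteq M, N$ and $M \cap Z(A) = N \cap Z(A)$, and split on whether $I \subseteq N$. If $I \subseteq N$, then intersecting the equality $M \cap Z(A) = N \cap Z(A)$ with $I$ forces $Z(I) \subseteq M \cap Z(A) \subseteq M$, so once again $Z(I) \subseteq M \cap I$ and condition (a) fails. If $I \nsubseteq N$, then Lemma \ref{lemmaxJ} also yields $N \cap I \in \M(I)$, and the injectivity in that lemma, together with $M \neq N$, gives $M \cap I \neq N \cap I$; intersecting $M \cap Z(A) = N \cap Z(A)$ with $I$ now produces $(M \cap I) \cap Z(I) = (N \cap I) \cap Z(I)$, in direct conflict with condition (b) for $I$.

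I do not anticipate any serious obstacle here; the argument is essentially a careful bookkeeping exercise built on Lemma \ref{lemmaxJ}. The only point requiring a moment of care is the subcase $I \subseteq N$ inside the $T_A^2$ analysis: since $N$ itself does not induce a modular maximal ideal of $I$, one cannot produce the second half of a pair needed to invoke clause (b) and must instead route the contradiction through clause (a) by first showing $Z(I) \subseteq M$.
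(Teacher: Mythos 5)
Your proof is correct and follows essentially the same route as the paper: both arguments hinge on Lemma \ref{lemmaxJ} to pass from $M\in T_A$ to $M\cap I\in\M(I)$, split on $T_A^1$ versus $T_A^2$, and exploit the identity $(M\cap I)\cap Z(I)=(M\cap Z(A))\cap I=(N\cap I)\cap Z(I)$. The only cosmetic difference is that you case-split explicitly on whether $I\subseteq N$, whereas the paper derives $I\nsubseteq N$ from condition (a) before applying condition (b); the content is the same.
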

\begin{proof}
Let $J$ be a weakly central ideal of $A$. Suppose that $M \in T^1_A$, so that $Z(A)\subseteq M$. Then $J \subseteq M$, for otherwise $M \cap J \in \M(J)$ (Lemma \ref{lemmaxJ}) and $M \cap J$ contains $Z(J)$, contradicting the weak centrality of $J$.  Secondly, suppose that $M \in T^2_A$. Then there exists $N\in \M(A)$ such that $N \neq M$, $Z(A)\nsubseteq M,N$ and $M \cap Z(A)=N\cap Z(A)$.
We have
\begin{equation}\label{eq:intcent}
(M \cap J) \cap Z(J) = (M \cap Z(A)) \cap J = (N \cap J) \cap Z(J). 
\end{equation}
Suppose that $M$ does not contain $J$. Then $M \cap J \in \M(J)$ (Lemma \ref{lemmaxJ}) and $M \cap J$ does not contain $Z(J)$ by the weak centrality of $J$.  By (\ref{eq:intcent}), $N \cap J$ does not contain $Z(J)$ and hence $N$ does not contain $J$. Since $J$ is weakly central, it follows from (\ref{eq:intcent}) that $M \cap J = N \cap J$ and hence (again by Lemma \ref{lemmaxJ}) $M=N$; a contradiction. Thus $J \subseteq M$ as required.
\end{proof}

Given a $C^*$-algebra $A$ we also define 
$$S_A:=\{P \in \P(A) : \, P \mbox{ is non-modular}\} \qquad \mbox{and} \qquad J_A:=\ker S_A.$$
By Remarks \ref{nmod} and \ref{remqcid} (c), $J_A$ contains the largest quasi-central ideal $K_A$ of $A$, so in particular $Z(J_A)=Z(A)$. Example \ref{ex2subnqc} shows that $J_A$ can strictly contain $K_A$ (in this case $J_A=A$).

\begin{theorem}\label{chQC}
For a $C^*$-algebra $A$ the following conditions are equivalent:
\begin{itemize}
\item[(i)]  $A$ has the CQ-property. 
\item[(ii)] $A$ is weakly central.
\item[(iii)] $S_A$ is a closed subset of $\P(A)$ and $J_A=K_A$ is a quasi-central weakly central $C^*$-algebra.  
\item[(iv)] There is a weakly central ideal $J$ of $A$ such that all primitive ideals of $A$ that contain $J$ are non-modular.
\item[(v)] There is an ideal $J$ of $A$ such that both $J$ and $A/J$ have the CQ-property and $Z(A/J)=(Z(A)+J)/J$.
\end{itemize}
\end{theorem}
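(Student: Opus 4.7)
The plan is to establish the cycle (i)$\Leftrightarrow$(ii), (ii)$\Rightarrow$(iii)$\Rightarrow$(iv)$\Rightarrow$(v)$\Rightarrow$(ii). The equivalence (i)$\Leftrightarrow$(ii) is immediate from Corollary~\ref{corCQWC}, so the work is in the remaining implications.

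For (ii)$\Rightarrow$(iii), I would use the CQ-property to identify $S_A$ with the hull $\{P\in \P(A):Z(A)\subseteq P\}$ of $Z(A)$. One inclusion is Remark~\ref{nmod}. For the reverse, if $Z(A)\subseteq P$ then $Z(A/P)=(Z(A)+P)/P=\{0\}$ by the CQ-property, which rules out $P$ being modular (as $A/P$ would then be simple unital with $Z(A/P)=\C$). Since the hull of $Z(A)$ is closed, $S_A$ is closed and, using Remark~\ref{remqcid}(c), $J_A=\ker S_A = K_A$. That $K_A$ is quasi-central is Remark~\ref{remqcid}(b), and it inherits the CQ-property (hence weak centrality, by Corollary~\ref{corCQWC}) from $A$ via Proposition~\ref{idquCQ}.

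For (iii)$\Rightarrow$(iv), take $J=J_A=K_A$. Closedness of $S_A$ ensures $\{P\in\P(A):J\subseteq P\}=S_A$, so every primitive ideal of $A$ containing $J$ is non-modular. For (iv)$\Rightarrow$(v), take the same $J$. The primitive ideals of $A/J$ correspond bijectively to those of $A$ containing $J$, and under this correspondence modularity is preserved; so every primitive ideal of $A/J$ is non-modular, whence by Proposition~\ref{propzerocen} $A/J$ has the CQ-property and $Z(A/J)=\{0\}$. In particular $(Z(A)+J)/J\subseteq Z(A/J)=\{0\}$, so equality holds trivially, and $J$ itself has the CQ-property by Corollary~\ref{corCQWC}.

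The main step is (v)$\Rightarrow$(ii), which I would verify by checking the two conditions of Definition~\ref{defwc} directly using Lemma~\ref{lemmaxJ}. Let $M\in\M(A)$. If $J\subseteq M$, then $M/J\in\M(A/J)$; weak centrality of $A/J$ together with the hypothesis $Z(A/J)=(Z(A)+J)/J$ gives $(Z(A)+J)/J\not\subseteq M/J$, so $Z(A)\not\subseteq M$. If $J\not\subseteq M$ then $M\cap J\in\M(J)$, and weak centrality of $J$ combined with $Z(J)=J\cap Z(A)$ again yields $Z(A)\not\subseteq M$. This is condition~(a). For condition~(b), suppose $M_1,M_2\in\M(A)$ satisfy $M_1\cap Z(A)=M_2\cap Z(A)$. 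A mixed case (say $J\subseteq M_1$ but $J\not\subseteq M_2$) is ruled out because $Z(J)=J\cap Z(A)\subseteq M_1\cap Z(A)=M_2\cap Z(A)\subseteq M_2\cap J$, contradicting weak centrality of $J$ applied to $M_2\cap J\in\M(J)$. If both contain $J$, pass to $A/J$, use $Z(A/J)=(Z(A)+J)/J$ to translate the intersection equality to $(M_1/J)\cap Z(A/J)=(M_2/J)\cap Z(A/J)$, and conclude $M_1=M_2$ from the weak centrality of $A/J$. If neither contains $J$, take intersections with $J$, use $Z(J)=J\cap Z(A)$ to transfer the equality, and conclude via the weak centrality of $J$ and Lemma~\ref{lemmaxJ}. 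The only real obstacle is the bookkeeping between the three cases; the elimination of the mixed case is the one that is perhaps least obvious.
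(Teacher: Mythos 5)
Your proof is correct, and for the pivotal implication it takes a genuinely different route from the paper. For (ii)$\Rightarrow$(iii) you identify $S_A$ outright with the hull of $Z(A)$ (Remark \ref{nmod} for one inclusion, the CQ-property of primitive quotients for the other), which yields the closedness of $S_A$ and the identity $J_A=K_A$ in one stroke via Remark \ref{remqcid} (c); the paper instead establishes closedness and the quasi-centrality of $J_A$ in two separate steps, the latter by contradiction, so your packaging is slightly more economical. (Two harmless slips: for $P$ modular primitive, $A/P$ is unital primitive rather than simple, but unital primitive already forces $Z(A/P)=\C 1\neq\{0\}$; and in your mixed case the displayed inclusion $M_2\cap Z(A)\subseteq M_2\cap J$ is not literally valid, though the intended conclusion $Z(J)\subseteq M_2\cap J$ is immediate since $Z(J)\subseteq J$ and $Z(J)\subseteq M_2\cap Z(A)\subseteq M_2$.) The real divergence is the final step: the paper proves (v)$\Rightarrow$(i) by contradiction, using Lemma \ref{lem:cqcont} to place $J$ inside $\ker T_A$ and then Lemma \ref{lemquot} to compute $Z(A/M)$ and $Z(A/(M\cap N))$ for witnesses $M,N$ of the failure of weak centrality, obtaining a dimension contradiction in each case. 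You instead verify Definition \ref{defwc} for $A$ directly by a three-way case split on whether the relevant modular maximal ideals contain $J$, transferring the data to $\M(A/J)$ or to $\M(J)$ via Lemma \ref{lemmaxJ} and invoking the weak centrality of the quotient or of the ideal respectively; the hypothesis $Z(A/J)=(Z(A)+J)/J$ is exactly what lets the intersection condition pass to $A/J$, and your elimination of the mixed case plays the role of Lemma \ref{lem:cqcont}. Your version dispenses with the auxiliary Lemmas \ref{lem:cqcont} and \ref{lemquot} at the cost of heavier bookkeeping; both arguments are complete.
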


In the proof of implication (v) $\Longrightarrow$ (i) of Theorem \ref{chQC}
we shall use the next simple fact.

\begin{lemma}\label{lemquot}
Let $A$ be a $C^*$-algebra and $J$ an ideal of $A$ such that $A/J$ has the CQ-property and $Z(A/J)=(Z(A)+J)/J$. Then $Z(A/I)=(Z(A)+I)/I$ for any ideal $I$ of $A$ that contains $J$.
\end{lemma}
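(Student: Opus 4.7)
The plan is to reduce the claim to the CQ-property of $A/J$ via the third isomorphism theorem. Since $J \subseteq I$, the canonical map $A/J \to A/I$ factors through $(A/J)/(I/J)$, and we have a standard $*$-isomorphism
$$\Phi \colon A/I \xrightarrow{\;\cong\;} (A/J)/(I/J), \qquad a+I \longmapsto (a+J) + I/J.$$
This isomorphism carries $Z(A/I)$ onto $Z((A/J)/(I/J))$, so the task becomes identifying the centre of the double quotient.

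Next I would apply the CQ-property of $A/J$ to its ideal $I/J$. This gives
$$Z\bigl((A/J)/(I/J)\bigr) \;=\; \bigl(Z(A/J) + I/J\bigr)\big/\bigl(I/J\bigr).$$
Using the hypothesis $Z(A/J) = (Z(A)+J)/J$, the numerator becomes
$$\frac{Z(A)+J}{J} + \frac{I}{J} \;=\; \frac{Z(A)+J+I}{J} \;=\; \frac{Z(A)+I}{J},$$
where the last equality uses $J \subseteq I$. Thus
$$Z\bigl((A/J)/(I/J)\bigr) \;=\; \bigl((Z(A)+I)/J\bigr)\big/\bigl(I/J\bigr).$$

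Finally, pulling this back through $\Phi^{-1}$ and again invoking the third isomorphism theorem (this time in the form $((Z(A)+I)/J)/(I/J) \cong (Z(A)+I)/I$), we obtain $Z(A/I) = (Z(A)+I)/I$, as required. There is no substantive obstacle: the argument is a routine diagram chase, and the only thing to verify is that each isomorphism invoked is the canonical one so that the identifications of centres and of the subalgebras $(Z(A)+\cdot)/\cdot$ are compatible, which they plainly are.
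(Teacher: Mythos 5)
Your proof is correct and follows essentially the same route as the paper's: both pass through the canonical isomorphism $A/I \cong (A/J)/(I/J)$, apply the CQ-property of $A/J$ to the ideal $I/J$, and substitute the hypothesis $Z(A/J)=(Z(A)+J)/J$. The only cosmetic difference is that you identify the centres as sets while the paper chases a single element $a+I$ through the isomorphism; the content is identical.
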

\begin{proof}
Let $a \in A$ and suppose that $a+I \in Z(A/I)$. Let $\phi: A/I \to (A/J)/(I/J)$ be the canonical isomorphism. Then
$$\phi(a+I) \in Z((A/J)/(I/J))=\frac{Z(A/J)+ I/J}{I/J}=\frac{(Z(A)+J)/J + I/J}{I/J},$$
so there exists $z \in Z(A)$ such that $\phi(a+I) =\phi(z+I)$.
Hence $a+I = z+I$ and so $a \in Z(A)+I$, as required.
\end{proof}

\begin{proof}[Proof of Theorem \ref{chQC}]
(i) $\Longleftrightarrow$ (ii). This is Corollary \ref{corCQWC}.

\smallskip

(ii) $\Longrightarrow$ (iii). Assume that $A$ is weakly central. Let $P\in \P(A)$ be in the closure of $S_A$ in $\P(A)$, that is  $J_A \subseteq P$. Then $Z(A)=Z(J_A)\subseteq P$. Since $A$ is weakly central, $P$ must be non-modular (Remark \ref{remmod}), so $P \in S_A$. Therefore $S_A$ is closed in $\P(A)$. 

Since $J_A$ is an ideal of $A$ and $A$ is weakly central, so is $J_A$ by Remark \ref{remclosed}. It remains to show that $J_A=K_A$. Since $K_A \subseteq J_A$, it suffices to show that $J_A$ is quasi-central. Assume there exists $R\in \P(J_A)$ that contains $Z(A)=Z(J_A)$ and let $P\in \P_{J_A}(A)$ such that  $R=P \cap J_A$. Obviously $Z(A)\subseteq P$. Since $S_A$ is closed in $\P(A)$, the set $\P_{J_A}(A)$ consists of all modular primitive ideals of $A$. In particular, $P$ is a modular primitive ideal of $A$ that contains $Z(A)$, which  (together with Remark \ref{remmod}) contradicts the weak centrality of $A$.

\smallskip

(iii) $\Longrightarrow$ (iv). Choose $J=J_A=K_A$.

\smallskip

(iv) $\Longrightarrow$ (v). Let $J$ be a weakly central ideal of $A$ such that all primitive ideals in $\P^J(A)$ are non-modular. By Corollary \ref{corCQWC} $J$ has the CQ-property. Also, all primitive ideals of $A/J$ are non-modular, so by  Proposition \ref{propzerocen} $Z(A/J)=\{0\}$ and $A/J$ has the CQ-property.  Further, since $J=\ker \P^J(A)$ and $Z(A)$ is contained in each $P \in \P^J(A)$ (Remark \ref{nmod}), $Z(A)\subseteq J$. Thus $$(Z(A)+J)/J=\{0\}=Z(A/J).$$

\smallskip

(v) $\Longrightarrow$ (i). Assume that $A$ does not have the CQ-property. By Corollary \ref{corCQWC} this is equivalent to say that $A$ is not weakly central. Since $J$ has the CQ-property, it is weakly central (Corollary \ref{corCQWC}), so by Lemma \ref{lem:cqcont} $J$ is contained in $\ker T_A$. We have the following two possibilities.

\emph{Case 1}. There is $M \in \M(A)$ such that $Z(A)\subseteq M$. Then $M \in T_A^1$ so $J \subseteq \ker T_A \subseteq M$. Thus, by Lemma \ref{lemquot},
$$\C \cong Z(A/M)=(Z(A)+M)/M=\{0\};$$
a contradiction.

\emph{Case 2}. There are distinct $M,N \in \M(A)$ such that $Z(A)\nsubseteq M,N$ and $M \cap Z(A) =N\cap Z(A)$. Then 
$$Z\left(\frac{A}{M\cap N}\right)\cong Z(A/M) \oplus Z(A/N) \cong \C \oplus \C.$$
On the other hand, $M,N \in T_A^2$, so $J \subseteq \ker T_A \subseteq M \cap N$. 
Since $Z(A) \nsubseteq M$, $M \cap Z(A)$ is a maximal ideal of $Z(A)$, so using Lemma \ref{lemquot} we get  
$$
Z\left(\frac{A}{M\cap N}\right)=\frac{Z(A)+(M \cap N)}{M \cap N}\cong \frac{Z(A)}{(M \cap N) \cap Z(A)} = \frac{Z(A)}{M \cap Z(A)} \cong \C;
$$
a contradiction.
\end{proof}

Recall that a $C^*$-algebra $A$ is said to be \emph{central} if $A$ is quasi-central and for all $P_1,P_2 \in \P(A)$, $P_1 \cap Z(A)=P_2 \cap Z(A)$ implies $P_1=P_2$ (see \cite[Section~9]{Kap1}). 
\begin{remark}\label{remcent}
It is well-known that a quasi-central $C^*$-algebra $A$ is central if and only if $\P(A)$ is a Hausdorff space (see e.g. \cite[Proposition~3]{Del1}). In particular, by Example \ref{exqc} (b), all homogeneous $C^*$-algebras are central. Further, all central $C^*$-algebras are obviously weakly central.
\end{remark}

\begin{corollary}\label{liminalCQ}
A liminal $C^*$-algebra $A$ has the CQ-property if and only if the set of all
kernels of infinite-dimensional irreducible representations of $A$ is closed in $\P(A)$ and the intersection of these kernels is a central $C^*$-algebra.
\end{corollary}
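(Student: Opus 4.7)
The plan is to specialise Theorem \ref{chQC}(iii) to the liminal setting and translate the two conditions there into the language of the statement. The main ingredient is that, for a liminal $C^*$-algebra $A$, every primitive quotient $A/P$ is isomorphic to $\KK(\H_\pi)$ for some irreducible representation $\pi$ with $\ker\pi=P$. Since $\KK(\H_\pi)$ is simple, every primitive ideal of $A$ is maximal, and it is unital if and only if $\H_\pi$ is finite-dimensional. Hence $S_A$ coincides with the set of kernels of infinite-dimensional irreducible representations of $A$, and $J_A=\ker S_A$ is precisely the intersection of these kernels. This identifies the first condition of the corollary with the closedness of $S_A$ in $\P(A)$.

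The main step will be to show that, for the ideal $J_A$ inside the liminal algebra $A$, the condition ``$J_A=K_A$ is a quasi-central weakly central $C^*$-algebra'' from Theorem \ref{chQC}(iii) is equivalent to ``$J_A$ is central''. Since $J_A$ is an ideal of a liminal algebra it is itself liminal, and since $K_A\subseteq J_A$ always holds, quasi-centrality of $J_A$ forces $J_A=K_A$. By Remark \ref{remqcid}(d), quasi-centrality also forces every primitive ideal of $J_A$ to be modular; combined with liminality (which makes every primitive quotient simple) this gives $\P(J_A)=\M(J_A)$, with every member being a maximal ideal. Under these conditions, condition (a) of Definition \ref{defwc} is automatic, and weak centrality of $J_A$ reduces to injectivity of the map $P\mapsto P\cap Z(J_A)$ on $\P(J_A)$ — which, together with quasi-centrality, is exactly the definition of a central $C^*$-algebra. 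Conversely, a central $C^*$-algebra is quasi-central by definition, and weakly central by Remark \ref{remcent}.

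Putting these two steps together, the conditions ``$S_A$ is closed in $\P(A)$ and $J_A=K_A$ is quasi-central weakly central'' become ``the set of kernels of infinite-dimensional irreducible representations of $A$ is closed in $\P(A)$ and its intersection $J_A$ is a central $C^*$-algebra'', and the corollary then follows from the equivalence (i)$\Leftrightarrow$(iii) in Theorem \ref{chQC}. The only subtle point is the equivalence in the second step, which genuinely uses both liminality (to identify modular primitive ideals with maximal ones and to inherit the structural hypotheses from $A$ down to $J_A$) and Remark \ref{remqcid}(d) (to pass from quasi-centrality to all primitive ideals of $J_A$ being modular).
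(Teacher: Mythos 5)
Your proposal is correct and follows essentially the same route as the paper: both specialise Theorem \ref{chQC}(iii), identify $S_A$ with the set of kernels of infinite-dimensional irreducible representations (using that liminality makes every primitive quotient isomorphic to an algebra of compact operators, hence simple, and unital exactly in the finite-dimensional case), and then observe that for $J_A$ the conjunction ``quasi-central and weakly central'' collapses to ``central'' once all primitive ideals of $J_A$ are known to be modular and maximal. The only cosmetic difference is that the paper derives modularity of the primitive ideals of $J_A$ from the closedness of $S_A$ (all irreducible representations of $J_A$ are then finite-dimensional), whereas you derive it from quasi-centrality via Remark \ref{remqcid}(d); both are valid and lead to the same conclusion.
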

\begin{proof}
Since $A$ is liminal, an irreducible representation of $A$ is infinite-dimensional if and only if its kernel is a non-modular primitive ideal of $A$.  Thus 
$$S_A=\{\ker \pi : \, [\pi] \in \hat{A}, \, \pi \mbox { infinite-dimensional}\},$$
where $\hat{A}$ denotes the spectrum of $A$. Hence, by Theorem \ref{chQC}, $A$ has the CQ-property if and only if $S_A$ is closed in $\P(A)$ and $J_A=\ker S_A$ is a quasi-central weakly central $C^*$-algebra. Suppose that $S_A$ is closed in $\P(A)$. Then all irreducible representations of $J_A$ are finite-dimensional. In particular, all primitive ideals of $J_A$ are modular and maximal, so weak centrality and quasi-centrality of $J_A$ in this case is equivalent to  centrality.      
\end{proof}

We also record the following special case of Corollary \ref{liminalCQ}.

\begin{corollary}\label{corfinwc}
If all irreducible representations of a $C^*$-algebra $A$ are finite-dimensional, then $A$ has the CQ-property if and only if $A$ is central.
\end{corollary}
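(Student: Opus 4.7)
The plan is to deduce this as a direct corollary of Corollary \ref{liminalCQ}, noting that the hypothesis makes the set $S_A$ of kernels of infinite-dimensional irreducible representations trivial.

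First I would observe that a $C^*$-algebra all of whose irreducible representations are finite-dimensional is automatically liminal: if $\pi$ is an irreducible representation of $A$ on a Hilbert space $H_\pi$ with $\dim H_\pi < \infty$, then $\BB(H_\pi) = \KK(H_\pi)$, so $\pi(A) \subseteq \KK(H_\pi)$. Hence Corollary \ref{liminalCQ} applies to $A$.

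Under the hypothesis, there are no infinite-dimensional irreducible representations of $A$, so the set $S_A = \{\ker \pi : [\pi] \in \hat{A}, \, \pi \text{ infinite-dimensional}\}$ appearing in the proof of Corollary \ref{liminalCQ} is empty. The empty set is vacuously closed in $\P(A)$, and by the convention introduced in Section \ref{prel} ($\ker \emptyset = A$), we have $J_A = \ker S_A = A$. Applying Corollary \ref{liminalCQ}, $A$ has the CQ-property if and only if $J_A = A$ is a central $C^*$-algebra, i.e., if and only if $A$ itself is central.

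There is no real obstacle here; the only point requiring a moment of care is the conventional value $\ker\emptyset = A$ (fixed in the preliminaries), which ensures that the ``intersection of kernels'' clause in Corollary \ref{liminalCQ} reduces to the statement that $A$ itself is central when $S_A$ is empty. One could equivalently give a self-contained derivation from Theorem \ref{chQC}: when every irreducible representation is finite-dimensional, every primitive ideal is modular and maximal, so $S_A = \emptyset$ and the condition ``$J_A = K_A$ is quasi-central and weakly central'' becomes ``$A$ is quasi-central and weakly central'', which for a $C^*$-algebra with Hausdorff primitive ideal space (forced by finite-dimensionality together with weak centrality via $\M(Z(A))$) is exactly centrality in the sense of Remark \ref{remcent}. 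However, invoking Corollary \ref{liminalCQ} directly is the shortest route.
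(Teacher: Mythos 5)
Your proposal is correct and matches the paper's intent exactly: the paper records this corollary as a special case of Corollary \ref{liminalCQ}, and your argument (all irreducible representations finite-dimensional $\Rightarrow$ $A$ liminal, $S_A=\emptyset$ is closed, and $\ker\emptyset = A$ so the centrality condition applies to $A$ itself) is precisely the intended specialisation. The only cosmetic quibble is the parenthetical in your alternative route invoking Hausdorffness of $\P(A)$, which is unnecessary --- when every primitive ideal is modular and maximal one has $\M(A)=\P(A)$, so quasi-centrality plus weak centrality is literally the definition of centrality --- but this does not affect your main argument.
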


\begin{remark}
In contrast to Proposition \ref{propwc} the multiplier algebras of weakly central $C^*$-algebras do not have to be weakly central. In fact, Somerset and the first-named author exhibited an example of a homogeneous (hence central) $C^*$-algebra $A$ such that $\P(M(A))$ is not Hausdorff \cite[Theorem~1]{AS1}. Specifically,
 the subhomogeneous $C^*$-algebra $M(A)$ (see e.g. \cite[Proposition~IV.1.4.6]{Bla}) is not (weakly) central.
\end{remark}

\smallskip

It is possible to show that every $C^*$-algebra contains a largest ideal with the CQ-property by using Zorn's lemma and the fact that the sum of two ideals with the CQ-property has the CQ-property. However, in view of Corollary \ref{corCQWC}, we are able to take a different approach that has the merit of obtaining a formula for this ideal in terms of the set $T_A$ of those modular maximal ideals of $A$ which witness the failure of the weak centrality of $A$.

\begin{theorem}\label{thmJcq}
Let $A$ be a $C^*$-algebra. Then $\ker T_A$ is the largest weakly central ideal of $A$. 
\end{theorem}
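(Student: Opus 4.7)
By Lemma \ref{lem:cqcont}, the ideal $J := \ker T_A$ already contains every weakly central ideal of $A$, so it remains to show that $J$ itself is weakly central. I plan to reduce to the unital case by passing to $A^\sharp$. A short bookkeeping of $\phi_{A^\sharp}$-values shows that for $M \in \M(A)$ and its lift $\widetilde M \in \M(A^\sharp)$, $\phi_{A^\sharp}(\widetilde M) = \phi_A(M)$ when $M \notin T_A^1$ and $\phi_{A^\sharp}(\widetilde M) = \phi_{A^\sharp}(A)$ otherwise; it follows that $T_{A^\sharp}$ consists of the lifts of elements of $T_A$, together with $A$ itself exactly when $T_A^1$ is non-empty, and hence $(\ker T_{A^\sharp}) \cap A = J$. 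Granting the unital case, $\ker T_{A^\sharp}$ is weakly central, and since $J$ is an ideal of $A^\sharp$ contained in $\ker T_{A^\sharp}$, Remark \ref{remclosed} yields that $J$ is weakly central.

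Now assume $A$ is unital. By Lemma \ref{lemmaxJ} the modular maximal ideals of $J$ are precisely $M \cap J$ for $M \in \M_J(A) = \M(A) \setminus \overline{T_A}$, and $Z(J) = J \cap Z(A)$. Weak centrality of $J$ therefore amounts to: (a) $Z(J) \not\subseteq M$ for every $M \in \M_J(A)$; and (b) for distinct $M_1, M_2 \in \M_J(A)$, $M_1 \cap Z(J) \neq M_2 \cap Z(J)$.

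The heart of the proof is (a). Under the Dauns--Hofmann identification $Z(A) \cong C(X)$ with $X = \G(A)$ compact Hausdorff, the ideal $Z(J)$ consists of the continuous functions on $X$ vanishing on $\overline{\phi_A(T_A)}$, so by Urysohn the condition $Z(J) \subseteq M$ is equivalent to $\phi_A(M) \in \overline{\phi_A(T_A)}$. Since $\phi_A \colon \P(A) \to X$ is a continuous surjection from the compact space $\P(A)$ onto the Hausdorff space $X$, it is a closed map; hence $\phi_A(\overline{T_A}) = \overline{\phi_A(T_A)}$. Assume for contradiction that $\phi_A(M) \in \overline{\phi_A(T_A)}$ and choose $P \in \overline{T_A}$ with $P \approx M$. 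Because $A/P$ is a non-zero unital $C^*$-algebra, Zorn's lemma supplies some $N \in \M(A)$ with $N \supseteq P$; then $N \supseteq P \supseteq J$ gives $N \in \overline{T_A}$, and $N \cap Z(A) \supseteq P \cap Z(A) = M \cap Z(A)$ together with the maximality of $N \cap Z(A)$ and $M \cap Z(A)$ in $Z(A)$ forces $N \approx M$. If $N = M$ then $J \subseteq P \subseteq M$ contradicts $M \in \M_J(A)$; if $N \neq M$ then $M$ and $N$ are distinct elements of $\M(A)$ sharing a Glimm class, so $M \in T_A^2$, contradicting $M \notin T_A$.

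Condition (b) is then immediate: distinct $M_1, M_2 \in \M_J(A) \subseteq \M(A) \setminus T_A$ with $\phi_A(M_1) = \phi_A(M_2)$ would directly put both in $T_A^2$ by definition, so their Glimm images are distinct points of $X \setminus \overline{\phi_A(T_A)}$, and the identification of $Z(J)$ with the functions on $X$ vanishing on $\overline{\phi_A(T_A)}$ separates them. The main obstacle is the case analysis in (a): the primitive ideal $P$ obtained from $\phi_A(\overline{T_A}) = \overline{\phi_A(T_A)}$ need not lie in $\M(A)$, which is exactly why the lifting step $P \subseteq N \in \M(A)$ and the maximality of $M$ are needed to route back into $T_A^2$.
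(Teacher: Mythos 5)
Your proof is correct, and it shares the paper's overall skeleton: Lemma \ref{lem:cqcont} disposes of maximality, the non-unital case is reduced to $A^\sharp$ by relating $T_{A^\sharp}$ to the lifts of $T_A$ (the paper proves only the inclusion $\ker T_A \subseteq \ker T_{A^\sharp}\cap A$, which suffices, by essentially the case analysis you sketch), and the unital case rests on compactness of $\P(A)$. Where you genuinely diverge is in the execution of the unital case. The paper argues by contradiction with nets: from a modular maximal ideal of $J$ containing $Z(J)$ it produces a net $(N_\alpha)$ in $T_A$ with $N_\alpha\cap Z(A)$ converging in $\M(Z(A))$, extracts a subnet converging in $\M(A)$, and uses continuity of $M\mapsto M\cap Z(A)$ to manufacture a second Glimm-equivalent maximal ideal; its condition (b) is handled by applying Lemma \ref{lemmaxJ} to the ideal $Z(J)$ of $Z(A)$. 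You instead encode everything in the Glimm space: the equivalence $Z(J)\subseteq M \Leftrightarrow \phi_A(M)\in\overline{\phi_A(T_A)}$ via Dauns--Hofmann and Urysohn, the closed-map property of $\phi_A$ in place of the subnet extraction, and the enlargement of the resulting primitive ideal $P\in\overline{T_A}$ to a maximal ideal $N$ in place of the paper's appeal to compactness of $\M(A)$ itself (a slightly delicate point, since a limit in $\P(A)$ of maximal ideals need not a priori be maximal; your explicit enlargement step is the cleaner way to handle it). Your condition (b) becomes a Urysohn separation in $C(\G(A))$, which for commutative algebras is equivalent to the paper's use of Lemma \ref{lemmaxJ} on $Z(J)\triangleleft Z(A)$. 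What your route buys is that all the compactness is concentrated in one transparent statement (a continuous map from a compact space to a Hausdorff space is closed); what it costs is the Dauns--Hofmann/Tietze machinery, which the paper's bare-hands net argument avoids. One presentational caveat: $\phi_A$ is defined in the paper only for unital $A$, so the ``bookkeeping of $\phi_{A^\sharp}$-values'' in your reduction should be phrased in terms of intersections with $Z(A^\sharp)$ and an appeal to Lemma \ref{lemmaxJ} for the ideal $Z(A)$ of $Z(A^\sharp)$, as the paper does; the content of your description of $T_{A^\sharp}$ is nevertheless correct.
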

\begin{proof}
Set $J:=\ker T_A$. By Lemma \ref{lem:cqcont} it suffices to prove that $J$ is weakly central. For this, we begin by assuming that $A$ is unital (so that $T_A=T_A^2$) and that $J$ is not weakly central. We have two possibilities.

\emph{Case 1.} There is $M_0 \in \M(J)$ such that $Z(J)\subseteq M_0$. By Lemma \ref{lemmaxJ} there exists $N_0 \in \M_J(A)$ such that $M_0=N_0 \cap J$. Since
$$\ker\{N \cap Z(A) :\, N \in T_A\}=Z(J)\subseteq N_0 \cap Z(A)\in \M(Z(A)),$$
there is a net $(N_\a)$ in $T_A$ such that 
\begin{equation}\label{eqwc1}
\lim_\a N_\a \cap Z(A) = N_0 \cap Z(A).
\end{equation}
in $\M(Z(A))$. Since $A$ is unital, $\M(A)$ is a compact subspace of $\P(A)$, so there is a subnet $(N_{\a(\beta)})$ of $(N_\a)$ that converges to some $N_0' \in \M(A)$. Then the continuity of the map $\M(A) \to \M(Z(A))$, defined by $M \mapsto M \cap Z(A)$, implies that
\begin{equation}\label{eqwc2}
\lim_\beta N_{\a(\beta)} \cap Z(A)= N_0' \cap Z(A).
\end{equation}
Since $\M(Z(A))$ is Hausdorff, (\ref{eqwc1}) and  (\ref{eqwc2}) imply 
\begin{equation}\label{eqwc3}
N_0 \cap Z(A)= N_0' \cap Z(A).
\end{equation}
Obviously $N_0'$ lies in the closure of $T_A$, so $J\subseteq N_0'$. Since $N_0 \in  \M_J(A)$, $N_0 \neq N_0'$, so (\ref{eqwc3}) implies $N_0,N_0' \in T_A$. In particular, $J \subseteq N_0$; a contradiction.

\smallskip

\emph{Case 2.} There are $M_1,M_2 \in \M(J)$ such that $M_1\neq M_2$, $Z(J)\nsubseteq M_1,M_2$ and $M_1 \cap Z(J)=M_2 \cap Z(J)$. By Lemma \ref{lemmaxJ} there are $N_1,N_2 \in \M_J(A)$ such that $M_1=N_1 \cap J$ and $M_2 =N_2 \cap J$. Since $Z(J)=J\cap Z(A)$ is an ideal of $Z(A)$,  
$$N_1 \cap Z(A), N_2 \cap Z(A) \in \M_{Z(J)}(Z(A))$$
and
$$(N_1 \cap Z(A))\cap Z(J)=M_1 \cap Z(J)= (N_2 \cap Z(A))\cap Z(J),$$
Lemma \ref{lemmaxJ} (applied to $Z(A)$ and its ideal $Z(J)$) implies that $N_1 \cap Z(A)=N_2 \cap Z(A)$. Since $N_1 \neq N_2$, we conclude that $N_1,N_2 \in T_A$, so $J \subseteq N_1 \cap N_2$; a contradiction.

\smallskip

We have now established that $\ker T_A$ is weakly central in the case that $A$ is unital. We suppose next that $A$ is non-unital. Then, by the above arguments, $\ker T_{A^\sharp}$ is a weakly central ideal of $A^\sharp$. Since $\ker T_{A^\sharp} \cap A$ is an ideal of $\ker T_{A^\sharp}$, it is weakly central by Remark \ref{remclosed}. Hence, it suffices to show that 
$$J:=\ker T_A \subseteq \ker T_{A^\sharp} \cap A.$$ 
So let $M \in T_{A^\sharp}$. We only have to show that $M$ contains $J$.
Since $A^\sharp$ is unital, $T_{A^\sharp}=T_{A^\sharp}^2$, so there is $M' \in \M(A^\sharp)$ such that $M' \neq M$ and  $M \cap Z(A^\sharp)=M' \cap Z(A^\sharp)$. We distinguish three possibilities.
\begin{itemize}
\item[-] $M=A$. Then clearly $M$ contains $J$.
\item[-] $M'=A$. Then $M$ does not contain $A$, so by Lemma \ref{lemmaxJ} $M \cap A$ is a modular maximal ideal of $A$ containing $Z(A)$ (since $M'$ does). Therefore, $M \cap A$ is in $T^1_A$ and hence contains $J$.
\item[-] Both $M$ and $M'$ are not $A$. Again, by Lemma \ref{lemmaxJ},  $M \cap A$ and $M' \cap A$ are distinct modular maximal ideals of $A$ having the same intersection with $Z(A)$. So either $M \cap A$ is in $T^1_A$ or it is in $T^2_A$.  In either case $M$ contains $J$.
\end{itemize}
\end{proof}

In the sequel, for any $C^*$-algebra $A$ by $J_{wc}(A)$ we denote the largest weakly central ideal of $A$. By Corollary \ref{corCQWC}, $J_{wc}(A)$ is precisely the largest ideal of $A$ with the CQ-property.

\begin{corollary}
Let $A$ be a $C^*$-algebra.
\begin{itemize}
\item[(a)] For any ideal $I$ of $A$ we have $J_{wc}(I)=I \cap J_{wc}(A)$.
\item[(b)] The sum of any two weakly central ideals of $A$ is a weakly central ideal of $A$.
\end{itemize} 
\end{corollary}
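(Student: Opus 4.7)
The key observation that drives both parts is a standard $C^*$-algebraic fact: if $J$ is a (closed two-sided) ideal of an ideal $I$ of $A$, then $J$ is automatically an ideal of $A$. This follows by taking a bounded approximate unit $(e_\alpha)$ of $I$: for $a \in A$ and $j \in J$ one has $aj = \lim_\alpha (ae_\alpha)j$, and since $ae_\alpha \in I$ while $J \trianglelefteq I$, each $(ae_\alpha) j \in J$, so $aj \in J$. Combined with the existence and maximality of $J_{wc}(A)$ from Theorem \ref{thmJcq}, and with the fact that weak centrality passes to ideals (Remark \ref{remclosed}), this reduces both parts to short formal arguments.

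For (a), I would prove the two inclusions separately. For $I \cap J_{wc}(A) \subseteq J_{wc}(I)$: the intersection $I \cap J_{wc}(A)$ is an ideal of the weakly central $C^*$-algebra $J_{wc}(A)$, hence itself weakly central by Remark \ref{remclosed}; since it is also an ideal of $I$, it is contained in the largest weakly central ideal $J_{wc}(I)$. For the reverse inclusion $J_{wc}(I) \subseteq I \cap J_{wc}(A)$: invoking the ideal-of-an-ideal fact above, $J_{wc}(I)$ is a closed two-sided ideal of $A$; it is weakly central by definition, so by the maximality clause of Theorem \ref{thmJcq} it lies in $J_{wc}(A)$. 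Together with the trivial inclusion $J_{wc}(I) \subseteq I$, this finishes the equality.

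For (b), let $J_1, J_2$ be weakly central ideals of $A$. By the maximality of $J_{wc}(A)$, $J_1, J_2 \subseteq J_{wc}(A)$, so $J_1 + J_2 \subseteq J_{wc}(A)$. Since the sum of two closed two-sided ideals in a $C^*$-algebra is again a closed ideal, $J_1 + J_2$ is an ideal of the weakly central $C^*$-algebra $J_{wc}(A)$, and hence weakly central by Remark \ref{remclosed}.

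No real obstacle is anticipated: the only substantive ingredient beyond invocation of earlier results is the approximate-unit argument that a closed ideal of an ideal is an ideal, which is entirely standard.
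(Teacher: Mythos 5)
Your proof is correct and follows essentially the same route as the paper: both inclusions in (a) are obtained exactly as in the paper (the forward one via the fact that an ideal of an ideal is an ideal of $A$, so $J_{wc}(I)$ is a weakly central ideal of $A$ and hence lies in $J_{wc}(A)$ by maximality; the reverse one via Remark \ref{remclosed} applied to $I\cap J_{wc}(A)$ as an ideal of $J_{wc}(A)$), and (b) is the paper's argument verbatim. The only difference is that you spell out the approximate-unit proof of the ideal-of-an-ideal fact, which the paper uses implicitly.
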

\begin{proof}
(a) Since $J_{wc}(I)$ is a weakly central ideal of $I$, it is also a weakly central ideal of $A$, so  $J_{wc}(I) \subseteq I \cap J_{wc}(A)$.

Conversely, since $I \cap J_{wc}(A)$ is an ideal of $J_{wc}(A)$, it is weakly central by Remark \ref{remclosed}. Hence, $I \cap J_{wc}(A)\subseteq J_{wc}(I)$.

\smallskip

(b) If $I_1$ and $I_2$ are weakly central ideals of $A$,  then by Theorem \ref{thmJcq} both $I_1$ and $I_2$ are contained in $J_{wc}(A)$, so $I_1+I_2 \subseteq J_{wc}(A)$. Thus, $I_1+I_2$ is weakly central by Remark \ref{remclosed}. 
\end{proof}

The next two examples demonstrate that there are non-trivial $C^*$-algebras whose largest weakly central ideal is zero. 

\begin{example}\label{ex:Heis}
Let $A$ be the rotation algebra (the $C^*$-algebra of the discrete three-dimensional Heisenberg group, see \cite{AP} and the references therein). For each $t$ in the unit circle $\mathbb{T}$, there is an ideal $J_t$ of $A$ such that, with $A_t := A/J_t$, $A$ is $*$-isomorphic to a continuous field of $C^*$-algebras $(A_t)_{t\in \mathbb{T}}$ via the assignment $a \mapsto (a+J_t)_{t\in \mathbb{T}}$. This isomorphism maps $Z(A)$ onto $C(\mathbb{T})$. If $t\in \mathbb{T}$ is a root of unity then $A_t$ is a non-simple homogeneous $C^*$-algebra. If $P$ is a primitive ideal of $A$ that contains $J_t$ then
$P \in \M(A)$, $P \cap Z(A) = J_t \cap Z(A) \neq Z(A)$ and hence $P \in T^2_A$.
It follows that $J_{wc}(A)=\ker T_A \subseteq J_t$. Since the roots of unity form a dense subset of $\mathbb{T}$ and the field $(A_t)_{t\in \mathbb{T}}$ is continuous, $J_{wc}(A)=\{0\}$. Consequently, no non-zero ideal of $A$ has the CQ-property. 
\end{example}

\begin{example}\label{ex:freegroup}
Consider the $C^*$-algebra $A=C^*(\mathbb{F}_2)$ (the full $C^*$-algebra of the free group $\mathbb{F}_2$ on two generators). Then by \cite{Cho}, $A$ is a unital primitive residually finite-dimensional $C^*$-algebra (that is, the intersection of the kernels of the finite-dimensional irreducible representations of $A$ is $\{0\}$). As $A$ is unital and primitive, $Z(A)=\C 1$, so $T_A=\M(A)$. In particular, $J_{wc}(A)=\ker T_A$ is contained in the intersection of the kernels of the finite-dimensional irreducible representations of $A$ which is zero. Therefore $J_{wc}(A) = \{0\}$, as with the rotation algebra.
\end{example}

\begin{remark}\label{rem:fin}
Both $C^*$-algebras in Examples \ref{ex:Heis} and \ref{ex:freegroup} are antiliminal. In Example \ref{exRobTh} we shall also give an example of a (separable) continuous-trace $C^*$-algebra $A$ for which $J_{wc}(A)=\{0\}$.

On the other hand, if $A$ is a $C^*$-algebra for which all irreducible representations have finite dimension, then it follows from \cite[Corollary~3.8]{Gog2} that the ideal $J_{wc}(A)$ is essential.
\end{remark}

\smallskip

We record next a slightly surprising result which can be used for a direct argument that the sum of two ideals with the CQ-property has the CQ-property. 

\begin{proposition}
Let $A$ be a $C^*$-algebra and let $J$ and $K$ be ideals of $A$.  If one of $J$ or $K$ has the CQ-property, then $Z(J+K)=Z(J)+Z(K)$.
\end{proposition}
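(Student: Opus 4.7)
The plan is to prove this by a standard trick using the CQ-property of $J$ applied to the ideal $J\cap K$ of $J$, combined with the canonical isomorphism $J/(J\cap K)\cong (J+K)/K$.

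First I would dispose of the easy inclusion $Z(J)+Z(K)\subseteq Z(J+K)$. Since $J$ and $K$ are ideals of $A$, the preliminary fact that $Z(I)=I\cap Z(A)$ for any ideal $I$ gives $Z(J),Z(K)\subseteq Z(A)$, hence every element of $Z(J)+Z(K)$ lies in $(J+K)\cap Z(A)=Z(J+K)$.

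For the nontrivial inclusion, assume (without loss of generality) that $J$ has the CQ-property. Take $z\in Z(J+K)=(J+K)\cap Z(A)$ and write $z=j+k$ with $j\in J$ and $k\in K$. The key observation is that, under the canonical isomorphism
\[
J/(J\cap K)\xrightarrow{\ \cong\ }(J+K)/K,
\]
the coset $j+(J\cap K)$ corresponds to $j+K=z+K$, and the latter lies in $Z(A/K)$ because $z\in Z(A)$. In particular, $j+(J\cap K)$ commutes with the image $(J+K)/K$ of $J/(J\cap K)$, so $j+(J\cap K)\in Z\bigl(J/(J\cap K)\bigr)$. The CQ-property of $J$, applied to its ideal $J\cap K$, now yields
\[
j+(J\cap K)\in \bigl(Z(J)+(J\cap K)\bigr)/(J\cap K),
\]
so there exist $z_1\in Z(J)$ and $m\in J\cap K$ with $j=z_1+m$.

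Finally I would conclude by setting $z_2:=m+k\in K$. Since $z\in Z(A)$ and $z_1\in Z(J)\subseteq Z(A)$, we have $z_2=z-z_1\in Z(A)$, hence $z_2\in K\cap Z(A)=Z(K)$. Therefore $z=z_1+z_2\in Z(J)+Z(K)$, as required. I do not anticipate any real obstacle here: once one identifies $J/(J\cap K)$ with $(J+K)/K$, the CQ-property of $J$ does all the work, and the only subtlety is invoking $Z(I)=I\cap Z(A)$ in the right place to conclude that the ``correcting'' element automatically lies in $Z(K)$.
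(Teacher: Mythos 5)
Your proof is correct and follows essentially the same route as the paper: both arguments use the canonical isomorphism $J/(J\cap K)\cong (J+K)/K$ together with the CQ-property of $J$ applied to its ideal $J\cap K$, and then recover the $Z(K)$-component from the identity $Z(K)=K\cap Z(A)$. The only difference is cosmetic (you decompose $z=j+k$ explicitly, whereas the paper works directly with the coset $z+K$), so there is nothing to add.
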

\begin{proof}
Assume that $J$ has the CQ-property and let $z\in Z(J+K)$. Then $z+K \in Z((J+K)/K)$. Let $\phi: (J+K)/K \to J/(J \cap K)$  be the canonical isomorphism. Then, since $J$ has the CQ-property and $J \cap K$ is an ideal of $J$,
$$
\phi(z+K) \in  Z\left(\frac{J}{J\cap K}\right) = \frac{Z(J) + (J \cap K)}{J \cap K}.$$
So there exists $y\in Z(J)$ such that
$$\phi(z+K) = y + (J \cap K) = \phi(y+K).$$
Hence $z+K = y+K$ and so $z-y \in K \cap Z(A) =Z(K)$. It follows that
$Z(J+K) \subseteq Z(J) + Z(K)$.  For the reverse inclusion, observe that
$$(J \cap Z(A)) + (K \cap Z(A)) \subseteq (J+K) \cap Z(A).$$
\end{proof}

The next example shows that if both ideals $J$ and $K$ of a $C^*$-algebra $A$ fail to satisfy the CQ-property, then $Z(J+K)$ can strictly contain $Z(J)+Z(K)$. 
\begin{example}\label{ex:Dix}
Let $\H$ be a separable infinite-dimensional Hilbert space and let $p \in \BB(\H)$ be any projection with infinite-dimensional kernel and image. Set 
$$A:=\KK(\H)+\C p + \C (1-p)\subset \BB(\H)$$ 
\cite[NOTE 1,~p.257]{Dix}. Then $A$ has precisely two maximal ideals, namely 
$$J:=\KK(\H)+\C p \qquad \mbox{and} \qquad K:=\KK(\H)+\C (1-p).$$ Obviously $Z(J)=Z(K)=\{0\}$, but $Z(J+K)=Z(A)=\C1$.

For later use, we also note that
$$
J_{wc}(A) = \ker T_A = J \cap K = \KK(\H)
$$
and hence $A/J_{wc}(A)$ is abelian.
\end{example}

We finish this section with a generalization of \cite[Theorem~3.1]{Arc2} for arbitrary $C^*$-algebras. For $C^*$-algebras $A_1$ and $A_2$, we denote their algebraic tensor product by $A_1 \odot A_2$. If $\beta$ is any $C^*$-norm on $A_1 \odot A_2$, we denote the completion of $A_1 \odot A_2$ with respect to $\beta$ by $A_1 \otimes_\beta A_2$. 

\begin{theorem}\label{thm:tenprod}
Let $A_1$ and $A_2$ be $C^*$-algebras. The following conditions are equivalent:
\begin{itemize}
\item[(i)] Both $A_1$ and $A_2$ have the CQ-property.
\item[(ii)] $A_1 \otimes_\beta A_2$ has the CQ-property for every $C^*$-norm $\beta$.
\item[(iii)] $A_1 \otimes_\beta A_2$ has the CQ-property for some $C^*$-norm $\beta$. 
\end{itemize} 
\end{theorem}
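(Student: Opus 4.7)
The plan is to reduce both nontrivial implications to the unital case, which is the content of \cite[Theorem~3.1]{Arc2}, via the device of unitizing both factors. The implication (ii) $\Rightarrow$ (iii) is immediate. For the other two, the basic tool is that any $C^*$-norm $\beta$ on $A_1\odot A_2$ extends to a $C^*$-norm $\tilde\beta$ on $A_1^\sharp\odot A_2^\sharp$ in such a way that $A_1\otimes_\beta A_2$ sits as an ideal of the unital $C^*$-algebra $B:=A_1^\sharp\otimes_{\tilde\beta}A_2^\sharp$, the quotient $B/(A_1\otimes_\beta A_2)$ being canonically isomorphic to the pullback $A_1^\sharp\times_{\C}A_2^\sharp$ (which is itself the unitization of $A_1\oplus A_2$).

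For (i) $\Rightarrow$ (ii), I would fix $\beta$, extend to $\tilde\beta$, apply Proposition \ref{propcqunit} to pass the CQ-property from each $A_i$ to $A_i^\sharp$, then use \cite[Theorem~3.1]{Arc2} to conclude the CQ-property for the unital tensor product $B$, and finally invoke Proposition \ref{idquCQ} to descend to the ideal $A_1\otimes_\beta A_2$.

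For (iii) $\Rightarrow$ (i), the plan is symmetric but runs in reverse: show that the unital algebra $B$ has the CQ-property and then, by \cite[Theorem~3.1]{Arc2}, deduce the CQ-property for both $A_i^\sharp$ and hence (Proposition \ref{propcqunit}) for each $A_i$. The natural tool is Theorem \ref{chQC}(v) applied to $B$ with the ideal $J:=A_1\otimes_\beta A_2$: $J$ has the CQ-property by hypothesis, so the remaining tasks are (a) to show that the pullback quotient $B/J\cong A_1^\sharp\times_\C A_2^\sharp$ has the CQ-property and (b) to verify the centre-lifting identity $Z(B/J)=(Z(B)+J)/J$.

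The main obstacle lies in (a), since by Proposition \ref{propcqunit} the CQ-property of the pullback is equivalent to the CQ-property of $A_1\oplus A_2$, hence of both $A_1$ and $A_2$ individually, which is what we set out to prove. To circumvent this apparent circularity, I would avoid Theorem \ref{chQC}(v) in favour of a direct analysis of weak centrality via Definition \ref{defwc}: using Lemma \ref{lemmaxJ}, each $M\in\M(B)$ either contains $J$ (and then corresponds to a modular maximal ideal of the pullback $B/J$, whose structure is transparent) or does not contain $J$ (and then $M\cap J\in\M(J)$), so the weak centrality of $J$ combined with the explicit fibre-product structure of $B/J$ should allow one to verify both conditions (a) and (b) of Definition \ref{defwc} for $B$. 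The centre-lifting computation in (b) should then follow by tracking how central elements of the pullback decompose along the two projections onto $A_1^\sharp$ and $A_2^\sharp$.
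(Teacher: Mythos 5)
Your argument for (i) $\Rightarrow$ (ii) is the paper's argument: extend $\beta$ to a $C^*$-norm on $A_1^\sharp\odot A_2^\sharp$ (the paper cites \cite[Theorem~2]{Arc3} for the existence of such an extension), combine Proposition \ref{propcqunit} with \cite[Theorem~3.1]{Arc2} to get weak centrality of the unital tensor product, and descend to the ideal $A_1\otimes_\beta A_2$ via Proposition \ref{idquCQ}. For (iii) $\Rightarrow$ (i), however, the paper proceeds quite differently from you: it passes to the quotient $A_1\otimes_{\min}A_2$, uses \cite[Corollary~1]{HW} to identify $Z(A_1\otimes_{\min}A_2)$ and $Z((A_1/I)\otimes_{\min}A_2)$ with $Z(A_1)\otimes Z(A_2)$ and $Z(A_1/I)\otimes Z(A_2)$ respectively, and then separates $q_I(Z(A_1))\otimes Z(A_2)$ from $Z(A_1/I)\otimes Z(A_2)$ by a functional of the form $\varphi\otimes\psi$. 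It never returns to the unitized tensor product $B$.

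Your route for (iii) $\Rightarrow$ (i) has a genuine gap, precisely at the step you flag and do not resolve. If $M\in\M(B)$ contains $J:=A_1\otimes_\beta A_2$, then $M\cap J=J\notin\M(J)$, so the weak centrality of $J$ imposes no constraint whatsoever on such $M$; and these are exactly the ideals that carry the content of the theorem, since $\M(B/J)$ contains copies of $\M(A_1^\sharp)$ and $\M(A_2^\sharp)$, so that verifying Definition \ref{defwc} for them \emph{is} verifying the weak centrality of $A_1^\sharp$ and $A_2^\sharp$ --- the desired conclusion. The intermediate implication ``$J$ weakly central $\Rightarrow$ $B$ weakly central'' that your case analysis is meant to deliver is in fact false. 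Take $A_1$ to be the algebra of Example \ref{ex:Dix} and $A_2=\KK(\H)$. Every ideal of $J=A_1\otimes_{\min}\KK(\H)$ has the form $I\otimes_{\min}\KK(\H)$, and every non-zero quotient $(A_1/I)\otimes_{\min}\KK(\H)$ is non-unital (a non-zero $C^*$-algebra of the form $D\otimes_{\min}\KK(\H)$ never has a unit, by a slice-map argument); hence $\M(J)=\emptyset$ and $J$ is vacuously weakly central --- indeed every quotient of $J$ has trivial centre, so $J$ even has the CQ-property. Yet $B=A_1\otimes_{\min}\KK(\H)^\sharp$ maps onto $A_1\otimes\C\cong A_1$, which is not weakly central, so $B$ is not weakly central by Remark \ref{remclosed}. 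No refinement of the bookkeeping with Lemma \ref{lemmaxJ} can recover from this.

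A caution worth recording: the example just described satisfies (iii) (with $\beta=\min$) but not (i), so the difficulty is not confined to your argument. The paper's separation step silently requires a non-zero functional $\psi$ on $Z(A_2)$, i.e. $Z(A_2)\neq\{0\}$; when one of the centres vanishes, the identity (\ref{eqten}) degenerates to $\{0\}=\{0\}$ and gives nothing. Thus the implication (iii) $\Rightarrow$ (i) needs an extra non-degeneracy hypothesis (e.g. that $A_1$ and $A_2$ are unital, where \cite[Theorem~3.1]{Arc2} applies directly), and any correct proof --- by your route or the paper's --- must invoke it.
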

\begin{proof}
(i)  $\Longrightarrow$ (ii). Suppose that $A_1$ and $A_2$ have the CQ-property and that $\beta$ is a $C^*$-norm on $A_1 \odot A_2$. Since $A_i^\sharp \subseteq A_i^{**}$ ($i=1,2$), it follows from \cite[Theorem~2]{Arc3} that there is a $C^*$-norm $\beta'$ on $A_1^\sharp \odot A_2^\sharp$ extending $\beta$ (recall that by our convention $A_i^\sharp=A_i$ if $A_i$ is unital). Since $A_i$ ($i=1,2$) has the CQ-property if and only if $A_i^\sharp$ is weakly central,  by \cite[Theorem~3.1]{Arc2} $A_1^\sharp \otimes_{\beta'} A_2^\sharp$ is weakly central. Hence, $A_1^\sharp \otimes_{\beta'} A_2^\sharp$ has the CQ-property and so does its ideal $A_1 \otimes_{\beta} A_2$ (Proposition \ref{idquCQ}).

\smallskip

(ii)  $\Longrightarrow$ (iii). This is trivial.

\smallskip

(iii)  $\Longrightarrow$ (i). Assume that $A_1 \otimes_\beta A_2$ has the CQ-property for some $C^*$-norm $\beta$. Since the minimal tensor product $A_1 \otimes_{\min} A_2$ is $*$-isomorphic to a quotient of $A_1 \otimes_\beta A_2$, it follows from Proposition  \ref{idquCQ} that $A_1 \otimes_{\min} A_2$ has the CQ-property. We show that $A_1$ has the CQ-property (a similar argument applies to $A_2$). So let $I$ be an ideal of $A_1$ and let 
$q_I : A_1\to A_1/I$ be the canonical map. Using the canonical $*$-epimorphism 
$$
q_I \otimes \mathrm{id}_{A_2} : A_1 \otimes_{\min} A_2 \to (A/I)\otimes_{\min} A_2
$$
and two applications of \cite[Corollary~1]{HW}, we have
\begin{eqnarray}\label{eqten}
Z(A_1/I)\otimes Z(A_2)&=& Z((A_1/I) \otimes_{\min} A_2)= (q_I \otimes \mathrm{id}_{A_2}) (Z(A_1)\otimes Z(A_2)) \nonumber \\
&=& q_I(Z(A_1))\otimes Z(A_2).
\end{eqnarray}
Assume that $q_I(Z(A_1))$ is strictly contained in $Z(A_1/I)$. Then there is a non-zero functional $\varphi \in Z(A_1/I)^*$ that annihilates $q_I(Z(A_1))$. If $\psi$ is any non-zero functional on $Z(A_2)$, then $\varphi \otimes \psi$ is a non-zero functional on  $Z(A_1/I) \otimes Z(A_2)$ that annihilates  $q_I(Z(A_1))\otimes Z(A_2)$, contradicting (\ref{eqten}). Thus
$$Z(A_1/I)=q_I(Z(A_1))=(Z(A_1)+I)/I$$
as desired.

\end{proof}

\section{CQ-elements in $C^*$-algebras}\label{S4}

Motivated by \cite{BG}, and with a view to identifying the individual elements which prevent the CQ-property, we now introduce a local version of the CQ-property.
 
\begin{definition}\label{def:cqel}
Let $A$ be a $C^*$-algebra. We say that an element $a \in A$ is a \emph{CQ-element} of $A$ if for every ideal $I$ of $A$, $a+I \in Z(A/I)$ implies $a \in Z(A)+I$.
\end{definition}

By $\cq(A)$ we denote the set of all CQ-elements of $A$. Obviously $A$ has the CQ-property if and only if $\cq(A)=A$. We also define $V_A:=A \setminus \cq(A)$, which is the set of elements which prevent $A$ from having the CQ-property.

We state the following $C^*$-algebraic version of \cite[Proposition 2.2]{BG}.

\begin{proposition}\label{cqel}
Let $A$ be a  $C^*$-algebra and let $a\in A$. The following conditions are equivalent:
\begin{enumerate}
\item[{\rm (i)}] $a\in \cq(A)$.
\item[{\rm (ii)}] For every $*$-epimorphism $\phi:A\to B$, where $B$ is another $C^*$-algebra, $\phi(a)\in Z(B)$ implies $a\in Z(A) +\ker\phi$.
\item[{\rm (iii)}]  $a\in Z(A) + {\rm Id}_A([a,A])$.
\end{enumerate}
\end{proposition}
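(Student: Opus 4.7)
The plan is to prove the three equivalences using essentially the same strategy as in Proposition \ref{ldef}, since the CQ-element condition is simply the local (pointwise) version of the CQ-property. I expect no substantial obstacle: the argument parallels the one for Proposition \ref{ldef} but restricted to a single element $a$.

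First I would handle (i) $\Longleftrightarrow$ (ii) as a translation between ideals and $*$-epimorphisms. Given any $*$-epimorphism $\phi: A \to B$, set $I := \ker \phi$ and use the induced $*$-isomorphism $\bar{\phi} : A/I \to B$. Then $\phi(a) \in Z(B)$ is equivalent to $a + I \in Z(A/I)$, and $a \in Z(A) + \ker \phi$ is literally $a \in Z(A) + I$. Conversely, every ideal $I$ arises as the kernel of the quotient $*$-epimorphism $q_I : A \to A/I$. So (i) and (ii) are mere reformulations of each other.

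Next I would prove (i) $\Longrightarrow$ (iii). Set $I := \mathrm{Id}_A([a,A])$. For every $x \in A$ we have $[a,x] \in I$, so $(a+I)(x+I) = (x+I)(a+I)$ in $A/I$; hence $a+I \in Z(A/I)$. Applying the CQ-element hypothesis to this ideal yields $a \in Z(A) + I = Z(A) + \mathrm{Id}_A([a,A])$.

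Finally, for (iii) $\Longrightarrow$ (i), suppose $a \in Z(A) + \mathrm{Id}_A([a,A])$ and let $I$ be any ideal of $A$ with $a+I \in Z(A/I)$. Then $[a,x] \in I$ for every $x \in A$, so $[a,A] \subseteq I$, and since $I$ is a (closed two-sided) ideal we obtain $\mathrm{Id}_A([a,A]) \subseteq I$. Combining with the hypothesis gives $a \in Z(A) + \mathrm{Id}_A([a,A]) \subseteq Z(A) + I$, which is precisely the condition defining $a \in \cq(A)$. This closes the loop and completes the proof.
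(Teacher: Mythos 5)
Your proof is correct. The paper omits the argument entirely (citing the algebraic analogue in \cite[Proposition~2.2]{BG}), and what you have written is exactly the standard argument that is being invoked: (i)$\Leftrightarrow$(ii) via the first isomorphism theorem, and the equivalence with (iii) by observing that $\mathrm{Id}_A([a,A])$ is the smallest ideal $I$ for which $a+I$ is central in $A/I$.
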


\begin{proposition}\label{propcq}
Let $A$ be a $C^*$-algebra. 
\begin{itemize}
\item[(a)] $\cq(A)$ is a self-adjoint subset of $A$ that is closed under scalar multiplication.
\item[(b)] $Z(A)+\cq(A)\subseteq \cq(A)$. 
\item[(c)]  If $I$ is an ideal of $A$ then $\cq(I)=I \cap \cq(A)$. In particular, $I$ has the CQ-property if and only if $I \subseteq \cq(A)$.
\item[(d)] If $A$ is unital, then for any $a \in \cq(A)$ and invertible $x \in A$ we have $xax^{-1}\in \cq(A)$. 
\end{itemize}
\end{proposition}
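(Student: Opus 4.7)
The plan is to derive (a) and (b) directly from Definition \ref{def:cqel}, to establish (c) via the characterisation in Proposition \ref{cqel}(iii) together with a key identification of $\I_A([a,A])$ with $\I_I([a,I])$ when $a\in I$, and to prove (d) by a conjugation argument.

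For (a), if $a\in\cq(A)$, $\lambda\in\C$, and $\lambda a+I\in Z(A/I)$ for some ideal $I$, then either $\lambda=0$ (so $\lambda a\in Z(A)+I$ trivially) or $\lambda\neq 0$, in which case multiplying $\lambda a+I$ by $\lambda^{-1}$ gives $a+I\in Z(A/I)$; since $a\in\cq(A)$ this forces $a\in Z(A)+I$, whence $\lambda a\in Z(A)+I$. Self-adjointness follows from $Z(A/I)^*=Z(A/I)$, $Z(A)^*=Z(A)$, and $I^*=I$. For (b), if $z\in Z(A)$, $a\in\cq(A)$, and $(z+a)+I\in Z(A/I)$, then subtracting the central element $z+I$ leaves $a+I\in Z(A/I)$, so $a\in Z(A)+I$ and therefore $z+a\in Z(A)+I$.

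For (c), the crux is the identity $\I_A([a,A])=\I_I([a,I])$ for $a\in I$. To verify it, take an approximate unit $(e_\alpha)$ of $I$; then $[a,b]=\lim[a,be_\alpha]\in\overline{[a,I]}$ for every $b\in A$, so $\I_A([a,A])=\I_A([a,I])$. A second approximate-unit argument (writing $xsy=\lim(xe_\alpha)s(e_\alpha y)$ for $s\in I$ and $x,y\in A^\sharp$) shows that $\I_A(S)=\I_I(S)$ whenever $S\subseteq I$; applied to $S=[a,I]$ this completes the identification. Now if $a\in I\cap\cq(A)$, Proposition \ref{cqel}(iii) gives $a=z+b$ with $z\in Z(A)$ and $b\in\I_A([a,A])\subseteq I$, so $z=a-b\in I\cap Z(A)=Z(I)$ and $b\in\I_I([a,I])$, showing $a\in\cq(I)$. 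Conversely, if $a\in\cq(I)$ then $a\in Z(I)+\I_I([a,I])\subseteq Z(A)+\I_A([a,A])$, so $a\in\cq(A)$. The ``in particular'' clause is immediate: $I$ has the CQ-property if and only if $\cq(I)=I$, equivalently $I=I\cap\cq(A)$, i.e.\ $I\subseteq\cq(A)$.

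For (d), assume $a\in\cq(A)$, $x\in A$ is invertible, and $xax^{-1}+I\in Z(A/I)$. The identity $[x^{-1}yx,c]=x^{-1}[y,xcx^{-1}]x$, together with the $x$-invariance of $I$, shows that for any $y\in A$ the element $y+I$ is central in $A/I$ if and only if $x^{-1}yx+I$ is. Taking $y=xax^{-1}$ yields $a+I\in Z(A/I)$, so $a\in Z(A)+I$ by $a\in\cq(A)$; conjugating by $x$ gives $xax^{-1}\in xZ(A)x^{-1}+xIx^{-1}=Z(A)+I$, using that the centre is pointwise fixed by inner automorphisms and that $I$ is conjugation-invariant. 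Hence $xax^{-1}\in\cq(A)$. The main obstacle is the approximate-unit identity in (c), which while routine requires careful bookkeeping of limits on both sides.
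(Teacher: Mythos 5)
Your proof is correct throughout, and parts (a), (b) and (d) run essentially as in the paper (the paper handles (b) via Proposition \ref{cqel}(iii) and the observation $\I_A([z+a,A])=\I_A([a,A])$, while you subtract the central coset $z+I$ directly in the definition; both are immediate). The one place where your route genuinely differs is the inclusion $I\cap\cq(A)\subseteq\cq(I)$ in (c). The paper argues by approximation: given $a\in I\cap\cq(A)$ and $\varepsilon>0$, it takes a finite sum $z+\sum_i u_i[a,x_i]v_i$ within $\varepsilon/3$ of $a$, replaces $z\in Z(A)$ by a nearby $z'\in Z(I)$ (using $\|z+I\|<\varepsilon/3$), and replaces the $u_i,v_i,x_i$ by elements of $I$ via an approximate identity, concluding from the closedness criterion in Proposition \ref{cqel}(iii). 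You instead prove the exact identity $\I_A([a,A])=\I_I([a,I])$ for $a\in I$ (combining $[a,b]=\lim_\alpha[a,be_\alpha]$ with the standard fact, which your second approximate-unit argument correctly re-proves, that a closed ideal of $I$ generated by a subset of $I$ coincides with the closed ideal of $A$ it generates), and then read off an exact decomposition $a=z+b$ with $z\in I\cap Z(A)=Z(I)$ and $b\in\I_I([a,I])$; this uses implicitly that $Z(A)+\I_A([a,A])$ is closed, which holds since it is the sum of a $C^*$-subalgebra and a closed ideal. Your version is arguably cleaner in that it isolates a reusable exact statement about the ideal generated by $[a,A]$, at the cost of two limit arguments instead of one; the paper's $\varepsilon/3$ bookkeeping avoids having to identify the two ideals exactly. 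Both are valid.
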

\begin{proof}
(a) This is trivial.

\smallskip

(b) Let $a \in \cq(A)$ and $z \in Z(A)$. By Proposition \ref{cqel}, $a \in  Z(A) + {\rm Id}_A([a,A])$, so 
$$z+a \in  Z(A) + {\rm Id}_A([a,A])=Z(A)+{\rm Id}_A([z+a,A]).$$
Using again Proposition \ref{cqel} it follows that $z+a \in \cq(A)$.

\smallskip

(c) Let $a \in \cq(I)$. By Proposition \ref{cqel}, $a \in Z(I)+ {\rm Id}_I([a,I])$. Since $Z(I)=I\cap Z(A) \subseteq Z(A)$ and ${\rm Id}_I([a,I]) \subseteq {\rm Id}_A([a,A])$, we get $a \in Z(A) + {\rm Id}_A([a,A])$.
Therefore $a \in \cq(A)$, so $\cq(I)\subseteq I \cap \cq(A)$.

Conversely, let $a \in I \cap \cq(A)$ and $\ep>0$. By Proposition \ref{cqel} there is a finite number of elements
$u_i,v_i,x_i \in A$ and $z \in Z(A)$ such that
\begin{equation}\label{eqest1}
\left\|a-z-\sum_i u_i [a,x_i]v_i\right\|< \frac{\ep}{3}.
\end{equation}
In particular, $\|z+I\|< \ep/3$, so using the canonical isomorphism $(Z(A)+I)/I \cong Z(A)/(I \cap Z(A))$ we can find an element $z' \in I \cap Z(A)=Z(I)$ such that
\begin{equation}\label{eqest2}
 \|z-z'\|< \frac{\ep}{3}.
\end{equation}
Let $(e_\a)$ be an approximate identity for $I$. Then for all indices $i$ 
\begin{equation}\label{eqest3}
\lim_\a u_ie_\a[a,e_\a x_i]e_\a v_i =u_i[a,x_i]v_i.
\end{equation} 
Indeed, since the multiplication on $A$ is continuous, it suffices to show that for any $x \in A$,
$$\lim_\a e_\a[a,e_\a x]e_\a =[a,x].$$
But this follows directly from the estimate 
\begin{eqnarray*}
\|e_\a[a,e_\a x]e_\a -[a,x]\|&\leq& \|e_\a ([a,e_\a x]-[a,x])e_\a\|+ \|e_\a([a,x]e_\a-[a,x])\|+ \|e_\a[a,x]-[a,x]\| \\
&\leq & \|[a,e_\a x]-[a,x]\|+ \|[a,x]e_\a - [a,x]\|+ \|e_\a [a,x]-[a,x]\|.
\end{eqnarray*}
Hence, by (\ref{eqest3}) there are $u_i',v_i',x_i'\in I$ such that 
\begin{equation}\label{eqest4}
\left\|\sum_i u_i [a,x_i]v_i- \sum_i u_i'[a,x_i']v_i'\right\|< \frac{\ep}{3}.
\end{equation}
Then by (\ref{eqest1}), (\ref{eqest2}) and (\ref{eqest4})
$$\left\|a-z'-\sum_i u_i' [a,x_i']v_i'\right\|< \ep.$$
Invoking again Proposition \ref{cqel}, we conclude that $a \in \cq(I)$, so $I \cap \cq(A)\subseteq \cq(I)$.

\smallskip

(d) Assume that $A$ is unital, $a \in \cq(A)$ and $x \in A$ invertible. If $I$ is an arbitrary ideal of $A$ such that $xax^{-1}+I \in Z(A/I)$, then $a+I\in Z(A/I)$. Since $a \in \cq(A)$, this implies $a \in Z(A)+I$. Then also $xax^{-1}\in Z(A)+I$, so $xax^{-1}\in \cq(A)$.
\end{proof}

\begin{corollary}\label{corsumsub}
If $A$ is a $C^*$-algebra, then $Z(A)+J_{wc}(A) \subseteq \cq(A)$.
\end{corollary}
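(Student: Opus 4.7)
The plan is to assemble this directly from results already in hand, so there is essentially no obstacle; the whole point of Corollary~\ref{corsumsub} is to record a quick consequence of the machinery already developed.

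First, I would invoke the definition of $J_{wc}(A)$ as the largest weakly central ideal of $A$ (Theorem~\ref{thmJcq}). By Corollary~\ref{corCQWC}, weak centrality is equivalent to the CQ-property, so $J_{wc}(A)$ is an ideal of $A$ with the CQ-property.

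Next, I would apply Proposition~\ref{propcq}(c), which says that an ideal $I$ of $A$ has the CQ-property precisely when $I \subseteq \cq(A)$. Applied to $I = J_{wc}(A)$, this yields the inclusion $J_{wc}(A) \subseteq \cq(A)$.

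Finally, I would close the argument with Proposition~\ref{propcq}(b), which asserts $Z(A) + \cq(A) \subseteq \cq(A)$. Combining this with the previous step gives
\[
Z(A) + J_{wc}(A) \;\subseteq\; Z(A) + \cq(A) \;\subseteq\; \cq(A),
\]
as required. No separate treatment of the unital and non-unital cases is needed, since both Propositions~\ref{propcq} and \ref{corCQWC} are stated for arbitrary $C^*$-algebras.
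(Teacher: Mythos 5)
Your proposal is correct and follows exactly the same route as the paper's proof: Theorem~\ref{thmJcq} and Corollary~\ref{corCQWC} give that $J_{wc}(A)$ has the CQ-property, Proposition~\ref{propcq}(c) then yields $J_{wc}(A)\subseteq\cq(A)$, and Proposition~\ref{propcq}(b) finishes the argument. Nothing to add.
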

\begin{proof}
By Theorem \ref{thmJcq} and Corollary \ref{corCQWC} $J_{wc}(A)=\ker T_A$ has the CQ-property, so by Proposition \ref{propcq} (c), $J_{wc}(A) \subseteq \cq(A)$. It remains to apply Proposition \ref{propcq} (b).
\end{proof}

\begin{proposition}\label{prop:commutators}
Let $A$ be a $C^*$-algebra.
\begin{itemize}
\item[(a)] All commutators $[a,b]$ ($a,b \in A$) belong to $\cq(A)$.  In particular, $\cq(A)=Z(A)$ if and only if $A$ is abelian.
\item[(b)] All quasi-nilpotent elements of $A$ belong to $\cq(A)$. Moreover if $a \in A$ is quasi-nilpotent, then $ab, ba \in \cq(A)$ for any $b \in A$.
\end{itemize}
\end{proposition}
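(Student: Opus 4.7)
My plan is to verify each assertion by checking condition (ii) of Proposition \ref{cqel}: given a $*$-epimorphism $\phi:A\to B$ under which the element in question lands in $Z(B)$, I will show that in fact $\phi$ sends it to $0$, which places it in $\ker\phi\subseteq Z(A)+\ker\phi$ and hence in $\cq(A)$. The vanishing will follow in all cases from the principle that a normal quasi-nilpotent element of a $C^*$-algebra must be zero (together with the observation that central elements are automatically normal), so the game is reduced to producing quasi-nilpotence of the image.

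For part (a), set $c:=[a,b]$ and let $\phi:A\to B$ be a $*$-epimorphism with $\phi(c)\in Z(B)$. Then $\phi(c)=[\phi(a),\phi(b)]$ commutes with $\phi(b)$, i.e.\ $[[\phi(a),\phi(b)],\phi(b)]=0$, and the Kleinecke--Shirokov theorem (applied in $B^\sharp$ if $B$ is non-unital) yields that $\phi(c)$ is quasi-nilpotent. Being central it is also normal, hence $\phi(c)=0$, proving $c\in\cq(A)$. The equivalence $\cq(A)=Z(A)\Leftrightarrow A$ abelian follows easily: $\Leftarrow$ is trivial, and if $\cq(A)=Z(A)$ then every $[a,b]$ is central in $A$, so the same Kleinecke--Shirokov argument run inside $A$ itself forces $[a,b]=0$.

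For part (b), first let $a$ be quasi-nilpotent and suppose $\phi(a)\in Z(B)$. Since $\sigma_{B^{\sharp}}(\phi(a))\subseteq\sigma_{A^{\sharp}}(a)=\{0\}$, $\phi(a)$ is quasi-nilpotent; being central (hence normal) it must vanish. For the product $ab$, assume $\phi(ab)\in Z(B)$ and set $c:=\phi(a)\phi(b)=\phi(ab)$. Using that $c$ commutes with $\phi(a)^n$ (as $c$ is central), a short induction yields $c^n=\phi(a)^n\phi(b)^n$ for every $n\geq 1$. Hence
\begin{equation*}
\|c^n\|^{1/n}\leq \|\phi(a)^n\|^{1/n}\,\|\phi(b)\|\longrightarrow 0,
\end{equation*}
so $c$ is quasi-nilpotent and, being normal, equals $0$. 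The case of $ba$ is identical, with the roles of $\phi(a)$ and $\phi(b)$ swapped in the inductive identity.

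The only non-elementary input is the Kleinecke--Shirokov theorem in (a); the rest is a short spectral-radius estimate combined with the ``normal plus quasi-nilpotent equals zero'' principle. I do not foresee any substantial obstacle: the lone technicality is passage to the unitization when $B$ is non-unital, which affects none of the spectral or commutator computations.
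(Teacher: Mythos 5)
Your argument is correct, and it reaches the same intermediate goal as the paper --- showing that whenever the image of the element is central it is actually zero --- but by a genuinely different route. The paper first reduces to primitive quotients $A/P$ (using $I=\ker\P^I(A)$), where centrality means being a scalar multiple of the identity or zero; it then kills the scalar by invoking the fact that the identity of a unital $C^*$-algebra is not a commutator for (a), and the fact that a quasi-nilpotent element is a topological divisor of zero for the products in (b). You instead work in an arbitrary quotient $B$ and exploit the observation that a central element is normal, so it suffices to prove quasi-nilpotence of the image: Kleinecke--Shirokov delivers this for commutators (the paper's ``identity is not a commutator'' is the special case of that theorem it actually needs), the identity $c^{n}=\phi(a)^{n}\phi(b)^{n}$ (proved by commuting the central element $c$ past $\phi(a)^{n}$) delivers it for $\phi(a)\phi(b)$, and the spectral radius formula handles the quasi-nilpotent elements themselves. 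Your induction is sound provided one multiplies by $c$ on the left, i.e.\ $c^{n+1}=c\cdot\phi(a)^{n}\phi(b)^{n}=\phi(a)^{n}\,c\,\phi(b)^{n}=\phi(a)^{n+1}\phi(b)^{n+1}$, which is what you indicate. What your approach buys is the elimination of both the reduction to $\P(A)$ and the topological-divisor-of-zero machinery, at the modest cost of citing Kleinecke--Shirokov in part (a); the two proofs are of comparable length and either could serve.
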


\begin{proof}
If $x \in A$ is a commutator, quasi-nilpotent, or a product by a quasi-nilpotent element,  we claim that for any primitive ideal $P$ of $A$, $x+P \in Z(A/P)$ implies $x \in P$. It then follows that $x \in \cq(A)$. Indeed, assume that $I$ is an ideal of $A$ such that $x+I \in Z(A/I)$. Then $x+P \in Z(A/P)$ for any $P\in \P^I(A)$, so $x \in P$. As $\ker \P^I(A)=I$, it follows that $x \in I$ and thus $x \in \cq(A)$ as claimed.

So assume that $P$ is a  primitive ideal of $A$ such that $x+P \in Z(A/P)$. If $P$ is non-modular, then $Z(A/P)=\{0\}$, so trivially $x \in P$. Hence, assume that $P$ is modular, so that $Z(A/P)\cong \C$. Then there is a scalar $\lambda$ such that 
\begin{equation}\label{eq:primscalar}
x+P=\lambda 1_{A/P}. 
\end{equation}

(a) Assume that $x$ is a commutator, so that $x=[a,b]$ for some $a,b \in A$. Then by (\ref{eq:primscalar}),
$$[a+P,b+P]=x+P=\lambda 1_{A/P}.$$
As $A/P$ is a unital $C^*$-algebra, it is well-known that this is only possible if  $\lambda=0$. Thus $x\in P$ as claimed.

If $A$ is non-abelian, then there are $a,b \in A$ such that $x:=[a,b]\neq 0$.
Then there is $P \in \P(A)$ such that $x \notin P$, so by the above arguments $x+P\notin Z(A/P)$. In particular, $x \in \cq(A)\setminus Z(A)$.

\smallskip

(b) If $x$ is quasi-nilpotent, so is  $x+P$, since by the spectral radius formula
$$\nu(x+P)=\lim_n \|x^n+P\|^{\frac{1}{n}}\leq \lim_{n} \|x^n\|^{^{\frac{1}{n}}}=\nu(x)=0.$$ 
This together with (\ref{eq:primscalar}) forces $\lambda=0$, so $x \in P$. 

Now assume that $x=ab$, where $a,b \in A$ and  $a$ is quasi-nilpotent.  
As $a+P$ is quasi-nilpotent, it is a topological divisor of zero (see e.g.  \cite[Section~XXIX.4]{GGK}). Hence, there is a sequence of elements $(x_n)$ in $A$ such that 
$$\|x_n+P\|=1 \quad \forall n \in \N \qquad \mbox{and} \qquad  \lim_n \|x_n a+P\|=0.$$
Then, by (\ref{eq:primscalar}), for all $n\in \N$ we have
$$\lambda x_n + P =(x_n+P)(x+P)=(x_na+P)(b +P),$$
so
\begin{equation}\label{eq:topzerodiv}
|\lambda|\leq  \|x_na+P\|\|b+P\|.
\end{equation}
Since the right side in (\ref{eq:topzerodiv}) tends to zero as $n$ tends to infinity, we conclude that $\lambda=0$. Therefore $x\in P$ as claimed. 

Finally, using the facts that $a\in A$ is quasi-nilpotent if and only if $a^*$ is quasi-nilpotent and that $\cq(A)$ is a self-adjoint subset of $A$ (Proposition \ref{propcq} (a)), we also conclude that $ba \in \cq(A)$ for any $b \in A$ and quasi-nilpotent $a \in A$. 
\end{proof}

\begin{remark}
By Proposition \ref{prop:commutators} (a), non-abelian $C^*$-algebras always contain non-central CQ-elements. On the other hand, in a purely algebraic setting, there are examples of non-abelian algebras in which all centrally stable elements are central \cite[Examples~2.5 and 2.6]{BG} (where central stability is the algebraic counterpart of the CQ-property). 
\end{remark}

If a $C^*$-algebra $A$ is unital, the following fundamental result gives a necessary and sufficient condition for a central element of $A/I$ to lift to a central element of $A$. Recall that by $\Psi_{A} : Z(A)\to C(\P(A))$ we denote the Dauns-Hofmann isomorphism.

\begin{theorem}\label{thmlift}
Let $A$ be a unital $C^*$-algebra and let $I$ be an ideal of $A$. Assume that an element $a \in A$ satisfies $a+I \in Z(A/I)$. Then $a \in Z(A)+I$ if and only if
\begin{equation}\label{eqpsi}
\Psi_{A/I}(a+I)(P_1/I)=\Psi_{A/I}(a+I)(P_2/I)
\end{equation}
for all $P_1,P_2 \in \P^I(A)$ such that $P_1\cap Z(A)=P_2 \cap Z(A)$.
\end{theorem}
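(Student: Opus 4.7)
The plan is to prove the two implications separately, with the forward direction being an immediate computation with the Dauns-Hofmann isomorphism, and the reverse direction being the substantive step, handled via the complete regularization map $\phi_A : \P(A) \to \G(A)$, the Tietze extension theorem, and the inverse Dauns-Hofmann isomorphism.

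For the forward implication, I would write $a = z + j$ with $z \in Z(A)$ and $j \in I$, so that $a + I = z + I$ in $Z(A/I)$. Using the canonical identification $(A/I)/(P/I) \cong A/P$ for $P \in \P^I(A)$, the defining property of $\Psi_{A/I}$ yields $\Psi_{A/I}(a+I)(P/I) = \Psi_A(z)(P)$. The hypothesis $P_1 \cap Z(A) = P_2 \cap Z(A)$ then forces $z - \Psi_A(z)(P_1) \cdot 1 \in Z(A) \cap P_1 = Z(A) \cap P_2 \subseteq P_2$, and since $z - \Psi_A(z)(P_2) \cdot 1 \in P_2$, properness of $P_2$ gives $\Psi_A(z)(P_1) = \Psi_A(z)(P_2)$, as required.

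For the reverse implication, I would first define a function $h$ on the image $X := \phi_A(\P^I(A)) \subseteq \G(A)$ by the formula $h(\phi_A(P)) := \Psi_{A/I}(a+I)(P/I)$; this is well-defined by hypothesis, since through the homeomorphism $\G(A) \cong \M(Z(A))$ the equivalence class $\phi_A(P)$ is determined by $P \cap Z(A)$. Because $A$ is unital, $\P(A)$ is compact, so the closed subset $\P^I(A)$ is compact and $\phi_A|_{\P^I(A)}$ is a continuous surjection from a compact space onto the Hausdorff subspace $X$, hence a closed and therefore a quotient map. The pullback $h \circ \phi_A|_{\P^I(A)}$ coincides with $\Psi_{A/I}(a+I)$ composed with the canonical homeomorphism $\P^I(A) \to \P(A/I)$, hence is continuous; by the quotient-map property, $h$ is continuous on $X$.

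To finish, I would apply Tietze extension on the compact Hausdorff (hence normal) space $\G(A)$ to extend $h$ to $\tilde{h} \in C(\G(A))$, and then apply the inverse Dauns-Hofmann isomorphism of $A$ to the continuous function $\tilde{h} \circ \phi_A \in C(\P(A))$ to obtain $z \in Z(A)$ with $\Psi_A(z) = \tilde{h} \circ \phi_A$. The forward-direction computation then gives, for every $P \in \P^I(A)$,
\[
\Psi_{A/I}(z+I)(P/I) = \Psi_A(z)(P) = \tilde{h}(\phi_A(P)) = h(\phi_A(P)) = \Psi_{A/I}(a+I)(P/I),
\]
and injectivity of $\Psi_{A/I}$ yields $a - z \in I$, so $a \in Z(A) + I$. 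The main obstacle I anticipate is verifying continuity of $h$ on $X$; this rests on the fact that $\phi_A|_{\P^I(A)}$ is a quotient map onto its image, which in turn uses crucially that $A$ is unital so that $\P(A)$ (and hence $\P^I(A)$) is compact.
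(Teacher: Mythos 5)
Your proposal is correct and follows essentially the same route as the paper: the same computation for the forward direction, and for the converse the same strategy of inducing a function on $\phi_A(\P^I(A))\subseteq\G(A)$, extending it by Tietze, and pulling it back through the Dauns--Hofmann isomorphism to produce the central lift $z$. The only difference is your continuity verification, where you invoke the standard fact that a continuous surjection from a compact space onto a Hausdorff space is closed and hence a quotient map, whereas the paper runs a direct subnet argument; both are valid and rest on the same compactness of $\P^I(A)$.
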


\begin{proof}
First assume that $a\in Z(A)+I$, so that $a-z \in I$ for some $z \in Z(A)$.
Suppose that $P_1, P_2 \in \P^I(A)$ and that $P_1 \cap Z(A) = P_2 \cap Z(A)$.
For $i=1,2$, there exists $\lambda_i \in \C$ such that
$$
a + P_i = z + P_i = \lambda_i 1 + P_i \quad  (\mbox{in } A/P_i).
$$
Then  $z-\lambda_1 1, z- \lambda_2 1 \in P_1 \cap Z(A)$  and so 
$(\lambda_1 - \lambda_2)1 \in  P_1$. It follows that $\lambda_1 = \lambda_2$ ($=\lambda$, say). Hence
$$(a+I)+P_i/I= \lm  (1+I)+ P_i/I \quad (\mbox{in } (A/I)/(P_i/I))$$
and therefore
$$\Psi_{A/I}(a+I)(P_1/I)=\lm=\Psi_{A/I}(a+I)(P_2/I).$$

\smallskip 

Conversely, assume that the equality (\ref{eqpsi}) holds for all $P_1,P_2 \in \P^I(A)$ such that $P_1\cap Z(A)=P_2 \cap Z(A)$. Since $A$ is unital, $\P(A)$ is compact and so the closed subset $\P^I(A)$ is also compact.
Define a function
$$f\in C(\P^I(A)) \qquad \mbox{by the formula} \qquad f(P):=\Psi_{A/I}(a+I)(P/I).$$
Let $\phi_A : \P(A)\to \G(A)$ be the complete regularization map (see Section \ref{prel}). Since $\P^I(A)$ is a compact subspace of $\P(A)$, $\phi_A$ continuous and $\G(A)$ a compact Hausdorff space, $K:=\phi_A(\P^I(A))$ is a compact (hence closed) subset of $\G(A)$.
Define a function 
$$g: K \to \C \qquad \mbox{by} \qquad g(G):=f(P),$$ where $P$ is any primitive ideal in $\P^I(A)$ such that $\phi_A(P)=G$. Since $f(P_1)=f(P_2)$ for any two $P_1,P_2 \in \P^I(A)$ such that $P_1\cap Z(A)=P_2 \cap Z(A)$ (which is equivalent to $\phi_A(P_1)=\phi_A(P_2)$), $g$ is well-defined. We claim that $g$ is continuous on $K$. Indeed, let $(G_\alpha)$ be an arbitrary net in $K$ that converges to some $G_0 \in K$.
By general topology, it suffices to show that for any subnet $(G_{\alpha(\beta)})$ of $(G_\alpha)$ there is a further subnet $(G_{\alpha(\beta(\gamma))})$ such that $(g(G_{\alpha(\beta(\gamma))})$ converges to $g(G_0)$. For each index $\beta$ choose $P_{\alpha(\beta)} \in \P^I(A)$ such that $\phi_A(P_{\alpha(\beta)})=G_{\alpha(\beta)}$. Then $(P_{\alpha(\beta)})$ is a net in the compact space $\P^I(A)$, so it has a subnet $(P_{\alpha(\beta(\gamma))})$ convergent to some $P_0 \in \P^I(A)$. Since $\phi_A$ is continuous and $\G(A)$ Hausdorff, $G_0=\phi_A(P_0)$. Further, since $f$ is continuous on $\P^I(A)$, $(f(P_{\alpha(\beta(\gamma))})$ converges to $f(P_0)$.
Therefore  
$$\lim_\gamma g(G_{\alpha(\beta(\gamma))})=\lim_\gamma f(P_{\alpha(\beta(\gamma))})=f(P_0)=g(G_0).$$

By the Tietze extension theorem, there exists a continuous function $\tilde{g}\in C (\G(A))$ that extends $g$. Then a function
$$\tilde{f}: \P(A)\to \C \qquad \mbox{defined by} \qquad \tilde{f}:=\tilde{g} \circ \phi_A$$
is continuous, so by the Dauns-Hofmann theorem there is $z \in Z(A)$ such that 
$\Psi_A(z)=\tilde{f}$. Since for any $P \in \P^I(A)$ we have $\tilde{f}(P)=f(P)$, we conclude $a-z \in P$. Thus $a-z \in I$, so $a \in Z(A)+I$ as desired.
\end{proof}

We now describe the set $\cq(A)$ for an arbitrary $C^*$-algebra $A$. It is somewhat easier to describe its complement $V_A$. In order to do this, we introduce the following  sets:
\begin{itemize}
\item[-] $V_A^1$ as the set of all $a \in A$ for which there exists $M \in \M(A)$
such that $Z(A) \subseteq M$ and $a+M$ is a non-zero scalar in $A/M$,
\item[-] $V_A^2$ as the set of all $a \in A$ for which there exist $M_1,M_2 \in \M(A)$ and  scalars $\lm_1 \neq \lm_2$  such that $Z(A)\nsubseteq M_i$, $M_1 \cap Z(A)= M_2 \cap Z(A)$ and $a+M_i=\lm_i 1_{A/M_i}$ ($i=1,2$).
\end{itemize}

\begin{theorem}\label{thmcqel}
If $A$ is a $C^*$-algebra then $V_A=V_A^1 \cup V_A^2$.
\end{theorem}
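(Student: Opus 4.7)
The plan is to prove both inclusions separately, handling the reverse inclusion first in the unital case via Theorem \ref{thmlift} and then reducing the non-unital case to the unital one through the unitization $A^\sharp$.

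For the inclusion $V_A^1 \cup V_A^2 \subseteq V_A$, I would exhibit an explicit ideal witnessing the failure of the CQ-condition. If $a \in V_A^1$ with witness $M \in \M(A)$, take $I = M$: then $a + M$ is a nonzero scalar multiple of $1_{A/M}$ (hence central), while $Z(A) \subseteq M$ forces every element of $(Z(A) + M)/M$ to vanish. If $a \in V_A^2$ with witnesses $M_1 \neq M_2$ and distinct scalars $\lambda_i$, take $I = M_1 \cap M_2$; distinct modular maximal ideals are comaximal, so $A/I \cong A/M_1 \oplus A/M_2$, the image of $a$ is plainly central, but any central lift $z \in Z(A)$ maps to the same scalar in both factors (using $M_1 \cap Z(A) = M_2 \cap Z(A)$ together with $Z(A/M_i) = \C 1_{A/M_i}$), contradicting $\lambda_1 \neq \lambda_2$.

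For the reverse inclusion in the unital case, I first observe that $V_A^1 = \emptyset$ (since $1 \in Z(A) \setminus M$ for any proper ideal). Given $a \in V_A$, pick an ideal $I$ with $a + I \in Z(A/I)$ and $a \notin Z(A) + I$. Theorem \ref{thmlift} applied to the unital quotient $A/I$ produces $P_1, P_2 \in \P^I(A)$ with $P_1 \cap Z(A) = P_2 \cap Z(A)$ and $\lambda_i := \Psi_{A/I}(a+I)(P_i/I)$ satisfying $\lambda_1 \neq \lambda_2$; by Dauns-Hofmann, $a + P_i = \lambda_i 1_{A/P_i}$. I extend each $P_i$ to a modular maximal ideal $M_i$ of $A$ (possible since $A$ is unital). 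Because $Z(A) \nsubseteq P_i$ in the unital setting, $P_i \cap Z(A)$ is already a maximal ideal of $Z(A)$, and must therefore equal $M_i \cap Z(A)$; in particular $Z(A) \nsubseteq M_i$ and $M_1 \cap Z(A) = M_2 \cap Z(A)$. The unit-preserving quotient $A/P_i \twoheadrightarrow A/M_i$ yields $a + M_i = \lambda_i 1_{A/M_i}$, and $\lambda_1 \neq \lambda_2$ forces $M_1 \neq M_2$, placing $a \in V_A^2$.

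For the non-unital case, Proposition \ref{propcq}(c) gives $V_A = A \cap V_{A^\sharp}$, and the unital case applied to $A^\sharp$ yields $V_{A^\sharp} = V_{A^\sharp}^2$. Given $a \in V_A$ with witnesses $N_1 \neq N_2 \in \M(A^\sharp)$ and scalars $\lambda_1 \neq \lambda_2$, I apply Lemma \ref{lemmaxJ} to transfer to $A$. If both $N_i \neq A$, then $M_i := N_i \cap A \in \M(A)$, and the computation $N_i \cap Z(A^\sharp) = M_i \cap Z(A)$ (using $1 \notin N_i$ and $Z(A^\sharp) = \C 1 + Z(A)$) transfers the equality of central intersections; $a$ then lands in $V_A^2$ if $Z(A) \nsubseteq M_i$, and otherwise in $V_A^1$ (selecting whichever $M_i$ carries the nonzero scalar). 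If instead $N_1 = A \in \M(A^\sharp)$, then $\lambda_1 = 0$, hence $\lambda_2 \neq 0$, and $Z(A) = A \cap Z(A^\sharp) = N_1 \cap Z(A^\sharp) = N_2 \cap Z(A^\sharp) \subseteq N_2 \cap A = M_2$, placing $a$ in $V_A^1$. The principal obstacle is precisely the bookkeeping in this non-unital reduction: the $V_A^1$ branch of the dichotomy cannot arise in the unital setting and emerges here only because of the anomalous modular maximal ideal $A \in \M(A^\sharp)$, which has no counterpart in $\M(A)$ under Lemma \ref{lemmaxJ}, making the case split on whether $N_i = A$ indispensable.
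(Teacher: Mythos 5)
Your argument is correct and follows essentially the same route as the paper: the direct verification that $V_A^1\cup V_A^2\subseteq V_A$ using the ideals $M$ and $M_1\cap M_2$, Theorem \ref{thmlift} for the unital case, and the reduction through $A^\sharp$ via Proposition \ref{propcq}~(c) and Lemma \ref{lemmaxJ} (your treatment of the case where both $N_i\neq A$ is in fact slightly more careful than the paper's, which does not separate off the possibility $Z(A)\subseteq M_i$). One cosmetic slip: the asserted identity $N_i\cap Z(A^\sharp)=M_i\cap Z(A)$ is not literally true, since $N_i\cap Z(A^\sharp)$ is an ideal of $Z(A^\sharp)=\C 1+Z(A)$ which may contain elements with non-zero scalar part; what you actually need, namely $M_1\cap Z(A)=M_2\cap Z(A)$, follows at once by intersecting $N_1\cap Z(A^\sharp)=N_2\cap Z(A^\sharp)$ with $Z(A)$.
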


\begin{proof}
Assume there exists $a \in V_A^1 \setminus V_A$. 
Then $a \in \cq(A)$ and  there is $M\in \M(A)$ such that $Z(A)\subseteq M$ and $a+M$ is a non-zero scalar in $A/M$. In particular, $a+M \in Z(A/M)$, so the CQ-condition implies $a \in Z(A)+M=M$; a contradiction. This shows $V_A^1 \subseteq  V_A$.

Now assume there exists $a \in V_A^2 \setminus V_A$ and let $M_1,M_2 \in \M(A)$ 
and $\lm_1,\lm_2 \in \C$, $\lm_1 \neq \lm_2$, such that $Z(A)\nsubseteq M_i$, $M_1 \cap Z(A)= M_2 \cap Z(A)$ and $a+M_i=\lm_i 1_{A/M_i}$ ($i=1,2$). Then by maximality of $M_1$ and $M_2$ we have $M_1+M_2=A$, so $A/(M_1\cap M_2)\cong (A/M_1) \oplus (A/M_2)$. Hence $a+ (M_1\cap M_2) \in Z(A/(M_1\cap M_2))$. Since $a\in \cq(A)$, this forces $a \in Z(A)+(M_1 \cap M_2)$. Choose a central element $z \in Z(A)$ such that $a-z \in M_1 \cap M_2$. Obviously, $z+M_i=\lm_i 1_{A/M_i}$ ($i=1,2$). On the other hand, under the canonical isomorphisms 
$$\frac{Z(A)+M_1}{M_1}\cong \frac{Z(A)}{M_1 \cap Z(A)}=\frac{Z(A)}{M_2 \cap Z(A)}\cong \frac{Z(A)+M_2}{M_2},$$
$z+M_1$ is mapped to $z+M_2$. This implies $\lm_1=\lm_2$; a contradiction. Therefore $V_A^2 \subseteq V_A$. 

\medskip

Conversely, let $a \in V_A$.

\emph{Case 1}. $A$ is unital.

In this case $V_A^1=\emptyset$. Since $a \notin\cq(A)$,  there exists an ideal $I$  of $A$ such that $a+I \in Z(A/I)$ but $a \notin Z(A)+I$. For each  $P \in \P^I(A)$
set $$\lm_P:=\Psi_{A/I}(a+I)(P/I),$$
where $\Psi_{A/I}: Z(A/I)\to C(\P(A/I))$ is the Dauns-Hofmann isomorphism. 
By Theorem \ref{thmlift} there are $P_1,P_2 \in \P^I(A)$ such that $P_1\cap Z(A)=P_2 \cap Z(A)$ and  $\lm_{P_1} \neq \lm_{P_2}$. Then $a-\lm_{P_i} 1 \in P_i$ ($i=1,2$). Choose maximal ideals $M_1,M_2$ of $A$ such that $P_1 \subseteq M_1$ and $P_2 \subseteq M_2$. Since $A/M_i$ is a quotient of $A/P_i$, it follows that $\lm_{M_i}=\lm_{P_i}$, so $a-\lm_{P_i} 1 \in M_i$ ($i=1,2$). Therefore,  $a+M_1$ and $a+M_2$ are distinct scalars in $A/M_1$ and $A/M_2$, which implies $a \in V_A^2$.

\smallskip

\emph{Case 2}. $A$ is non-unital. 

In this case we  work inside the unitization $A^\sharp$. By Proposition \ref{propcq} (c) $a \in V_{A^\sharp}$. Then, using Case 1, there are maximal ideals $M_1$ and $M_2$ of $A^\sharp$ and scalars $\lm_1 \neq \lm_2$ such that $M_1 \cap Z(A^\sharp) = M_2 \cap Z(A^\sharp)$ and  $a-\lm_i 1\in M_i$ ($i=1,2$). We have two possibilities.

\emph{Case 2.1}. One of $M_1,M_2$ coincides with $A$. Say $M_1=A$. Then $\lm_1=0$ (since $a$ belongs to $A$), so $\lm_2 \neq 0$. In this case $M_2\neq A$, since otherwise $\lm_1=\lm_2=0$; a contradiction. Then, by Lemma \ref{lemmaxJ}, $N_2:=M_2 \cap A$ is a modular maximal ideal of $A$. Since $Z(A)=A \cap Z(A^\sharp)=M_2 \cap Z(A^\sharp)$, $N_2$ contains $Z(A)$. Under the canonical isomorphism
$$\frac{A}{N_2}\cong \frac{A+M_2}{M_2}= \frac{A^\sharp}{M_2},$$
$a+N_2$ is mapped to $a+M_2=\lm_2 1+M_2$, so 
$a+N_2= \lm_2 1_{A/N_2}$. Since $\lm_2 \neq 0$, we conclude that $a$
belongs to $V_A^1$.

\emph{Case 2.2}. $M_1 \neq A$ and $M_2 \neq A$. Then, by Lemma \ref{lemmaxJ}, $N_i:= M_i \cap A$ ($i=1,2$) are modular maximal ideals of $A$ that have the same intersection with $Z(A)$. Similarly as in Case 2.1, using the canonical isomorphisms $A/N_i \cong 
A^\sharp/M_i$, we conclude that $a+N_i = \lm_i 1_{A/N_i}$ ($i=1,2$), which implies
that $a$ belongs to $V_A^2$.

Therefore $a \in V_A^1 \cup V_A^2$, so $V_A=V_A^1 \cup V_A^2$ as claimed.
\end{proof}

\begin{remark}
Theorem \ref{thmcqel} enables us to recapture Corollary \ref{corCQWC} without using Vesterstr\o m's theorem for the unital case (Theorem \ref{VT}). Indeed, if $A$ is weakly central then clearly $V_A$ is empty and so $A$ has the CQ-property. Conversely, suppose that $A$ is not weakly central. If there exists $M \in \M(A)$ such that $Z(A)\subseteq M$ then, taking any $a \in A$ such that $a+M = 1_{A/M}$, we obtain $a \in V^1_A$. Otherwise, there exist distinct $M_1, M_2 \in \M(A)$ such that $M_1 \cap Z(A) = M_2\cap Z(A) \neq Z(A)$. Since $M_1+M_2=A,$ there exists $a\in M_1$ such that $a+M_2 = 1_{A/M_2}$ and hence $a\in V^2_A$. Thus $V_A$ is non-empty and so $A$ does not have the CQ-property.

Also, the methods of this section enable us to give a short alternative proof of the fact that $\ker T_A$ is weakly central (Theorem \ref{thmJcq}). By the preceding paragraph $\ker T_A$ is weakly central if and only if $V_{\ker T_A}=\emptyset$. By Theorem \ref{thmcqel} and Proposition \ref{propcq} (c) it suffices to show that $a \in \ker T_A$ implies $a \notin V_A^1 \cup V_A^2=V_A$. But this is trivial, since for any $M \in \M(A)$ that contains $Z(A)$ we have $M \in T_A^1$, so $a \in M$ and therefore $a+M$ is zero in $A/M$. Similarly, for all 
$M_1, M_2 \in \M(A)$ such that $M_1 \neq M_2$ and $M_1 \cap Z(A) = M_2\cap Z(A) \neq Z(A)$ we have $M_1,M_2 \in T_A^2$, so $a \in M_1 \cap M_2$
and hence $a+M_i$ is zero in $A/M_i$ ($i=1,2$).

\end{remark}

If $A$ is a $C^*$-algebra then by Proposition \ref{prop:commutators} (a) all commutators $[a,b]$ ($a,b \in A$) belong to $\cq(A)$. Let us denote by $[A,A]$ the linear span of all commutators of $A$ and by $\overline{[A,A]}$ its norm-closure. We now characterise when $\cq(A)$ contains  $\overline{[A,A]}$.

\begin{theorem}\label{thm:comtstates}
Let $A$ be a $C^*$-algebra that is not weakly central.
\begin{itemize}
\item[(a)] If for all $M \in T_A$, $A/M$ admits a tracial state then $\overline{[A,A]} \subseteq \cq(A)$. 
\item[(b)] If there is $M \in T_A$ such that $A/M$ does not admit a tracial state, then $[A,A]\nsubseteq \cq(A)$. 
\end{itemize}
\end{theorem}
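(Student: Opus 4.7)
For part (a) I will invoke the characterisation $V_A = V_A^1 \cup V_A^2$ from Theorem~\ref{thmcqel} and show that no $x \in \overline{[A,A]}$ meets the scalar constraints defining $V_A^1$ or $V_A^2$. Fix such an $x$ and a sequence $x_n \in [A,A]$ with $x_n \to x$. For any $M \in T_A$ the hypothesis provides a tracial state $\tau_M$ on $A/M$, and since every $*$-homomorphism carries commutators to commutators, $\tau_M(x_n + M) = 0$ for all $n$; passing to the limit gives $\tau_M(x + M) = 0$. If $x + M = \lambda 1_{A/M}$ this forces $\lambda = 0$, which immediately rules out the $V_A^1$ case (which requires a nonzero scalar). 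The same argument applied at $M_1, M_2 \in T_A^2$ in the $V_A^2$ case forces $\lambda_1 = \lambda_2 = 0$, contradicting $\lambda_1 \neq \lambda_2$. Thus $\overline{[A,A]} \subseteq \cq(A)$.

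For part (b), fix $M \in T_A$ such that $A/M$ admits no tracial state. Since $A/M$ is simple and unital, Pop's theorem \cite[Theorem~1]{Pop} yields $A/M = [A/M, A/M]$, so we can write $1_{A/M} = \sum_i [u_i, v_i]$ with $u_i, v_i \in A/M$. If $M \in T_A^1$, take any lifts $a_i, b_i \in A$ of $u_i, v_i$ and set $x := \sum_i [a_i, b_i] \in [A,A]$; then $x + M = 1_{A/M}$ is a nonzero scalar, exhibiting $x \in V_A^1$. If instead $M \in T_A^2 \setminus T_A^1$, choose the companion $N \in \M(A)$ furnished by the definition of $T_A^2$ (so $N \neq M$, $Z(A) \not\subseteq N$, and $M \cap Z(A) = N \cap Z(A)$). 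Maximality forces $M + N = A$, so the restriction of the quotient map $A \to A/M$ to $N$ is surjective; lifting each $u_i, v_i$ to an element of $N$ yields $x := \sum_i [a_i, b_i] \in [A,A] \cap N$ with $x + M = 1_{A/M}$ and $x + N = 0$, hence $x \in V_A^2$. In either case $x \in [A,A] \setminus \cq(A)$.

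I expect the subtlest step to be the $T_A^2$ case of part (b), where a single commutator sum in $A$ must simultaneously yield $x + M = 1_{A/M}$ and $x + N = 0$. The key observation that makes this clean is the CRT-style surjectivity of $N \to A/M$ coming from $M + N = A$: lifting every ingredient inside $N$ automatically zeroes $x$ modulo $N$ while realising the prescribed commutator decomposition modulo $M$. Once Theorem~\ref{thmcqel} and Pop's theorem are in hand, the rest of the argument is straightforward bookkeeping with the sets $V_A^1, V_A^2, T_A^1, T_A^2$.
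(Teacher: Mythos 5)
Your proposal is correct and follows essentially the same route as the paper: part (a) combines Theorem~\ref{thmcqel} with the vanishing of tracial states on closures of commutator spans, and part (b) uses Pop's theorem to write $1_{A/M}$ as a sum of commutators, handling the $T_A^2$ case by producing the lifts inside $N$ (your use of the surjectivity of $N \to A/M$ from $M+N=A$ is just the paper's isomorphism $N/(M\cap N) \cong A/M$ in different clothing). No gaps.
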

\begin{proof}
(a) Let $x \in \overline{[A,A]}$. In order to show that $x \in \cq(A)$, it suffices by Theorem \ref{thmcqel} to prove that for each $M\in T_A$, $x+M \in Z(A/M)$ implies $x \in M$. Therefore, fix some $M \in T_A$ and assume that $x+M \in Z(A/M)$, so that  $x+M=\lambda 1_{A/M}$ for some scalar $\lambda$. By assumption $A/M$ admits a tracial state $\tau$. As $x \in \overline{[A,A]}$, clearly $x+M \in \overline{[A/M,A/M]}$. Since $\tau(\overline{[A/M,A/M]})=\{0\}$, we get
$$\lambda = \tau(\lambda 1_{A/M})= \tau(x+M)=0.$$
Thus $x \in M$, as claimed.

(b) Assume that $A/M$ does not admit a tracial state for some $M \in T_A$. 
As $T_A=T_A^1\cup T_A^2$, we have two possibilities.

\emph{Case 1.} $M \in T_A^1$, so that $Z(A)\subseteq M$. By \cite[Theorem~1]{Pop} there is an integer $n>1$, that depends only on $A/M$, such that any element of $A/M$ can be expressed as a sum of $n$ commutators. In particular, there are $a_1, \ldots, a_n, b_1, \ldots , b_n \in A$ such that 
$$\sum_{i=1}^n [a_i,b_i]+M=\sum_{i=1}^n [a_i+M,b_i+M]=1_{A/M},$$ 
so by Theorem \ref{thmcqel}
$$\sum_{i=1}^n [a_i,b_i] \in V_A^1 \subseteq V_A.$$

\emph{Case 2.} $M \in T_A^2$. Then $Z(A)\nsubseteq M$ and there exists $N \in \M(A)$ such that $N \neq M$ and $M \cap Z(A)=N\cap Z(A)$.
By Lemma \ref{lemmaxJ}, $M\cap N$ is a modular maximal ideal of $N$. As $N/(M\cap N) \cong A/M$, $N /(M \cap N)$ also does not admit a tracial state, so by  \cite[Theorem~1]{Pop} there is an integer $n > 1$ and elements $a_1, \ldots, a_n, b_1, \ldots , b_n \in N$ such that   
$$\sum_{i=1}^n [a_i,b_i]+M\cap N=1_{N/(M\cap N)}.$$ 
Using the canonical isomorphism $N/(M\cap N) \cong A/M$, we get
$$\sum_{i=1}^n [a_i,b_i]+M=1_{A/M}$$ 
and thus by Theorem \ref{thmcqel}
$$\sum_{i=1}^n [a_i,b_i] \in V_A^2 \subseteq V_A.$$
\end{proof}

\begin{corollary}\label{cor:postAF}
If $A$ is a postliminal $C^*$-algebra or an AF-algebra, then $\overline{[A,A]} \subseteq \cq(A)$. 
\end{corollary}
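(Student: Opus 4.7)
The plan is to reduce directly to Theorem \ref{thm:comtstates}(a). If $A$ is weakly central then $\cq(A)=A$ by Corollary \ref{corCQWC} and there is nothing to prove, so I shall assume $A$ is not weakly central. It then suffices to verify the tracial hypothesis of Theorem \ref{thm:comtstates}(a): namely that, for every $M \in T_A$, the quotient $A/M$ admits a tracial state. Since $T_A \subseteq \M(A)$, each such $A/M$ is in any case a unital simple $C^*$-algebra, so in both cases I am reduced to producing a tracial state on a particular unital simple quotient of $A$.

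In the postliminal case the argument is immediate: $A/M$ is itself a unital simple postliminal $C^*$-algebra, so Remark \ref{rem:postlimmax} gives $A/M \cong \MM_n(\C)$ for some $n\in\N$, and the normalised trace on $\MM_n(\C)$ supplies the required tracial state.

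For the AF case I would invoke Elliott's theorem that every quotient of an AF-algebra is again an AF-algebra, so that $A/M$ is a unital simple AF-algebra. The classical fact that every unital AF-algebra carries a tracial state — obtained, for example, as a weak$^*$-limit of compatible normalised traces on an increasing sequence of finite-dimensional $C^*$-subalgebras, or equivalently as a positive state on the dimension group $K_0(A/M)$ normalised at the class of the unit — then finishes the verification, and Theorem \ref{thm:comtstates}(a) yields $\overline{[A,A]}\subseteq \cq(A)$. The only input beyond the preceding material of the paper is Elliott's permanence of the AF property under quotients together with the existence of traces on unital AF-algebras; once these standard facts are granted, the argument is routine.
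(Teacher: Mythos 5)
Your proposal is correct and follows essentially the same route as the paper: reduce to Theorem \ref{thm:comtstates}(a) by checking that each $A/M$ ($M\in T_A$) has a tracial state, using Remark \ref{rem:postlimmax} in the postliminal case and the existence of tracial states on unital simple AF-algebras (the paper cites Lin, Proposition~3.4.11) in the AF case. The only cosmetic difference is that you spell out the standard construction of the trace and the permanence of the AF property under quotients, which the paper takes for granted.
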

\begin{proof}
If $A$ is weakly central, then $\cq(A)=A$ so we have nothing to prove. Hence assume that $A$ is not weakly central, so that there is $M \in T_A$. If $A$ is postliminal then by Remark \ref{rem:postlimmax} $A/M\cong \MM_n(\C)$ for some $n \in \N$, so $A/M$ has a (unique) tracial state. If, on the other hand, $A$ is an AF-algebra, then $A/M$ is a unital simple AF-algebra, so it also admits a tracial state (see e.g. \cite[Proposition~3.4.11]{Lin}). Therefore, the assertion follows directly from Theorem \ref{thm:comtstates} (a).
\end{proof}

By Corollary \ref{corsumsub}, for any $C^*$-algebra $A$,  $\cq(A)$ always contains $Z(A)+J_{wc}(A)$. The next result in particular demonstrates that $\cq(A)$ is a $C^*$-subalgebra of $A$ if and only if $\cq(A)=Z(A)+J_{wc}(A)$. In fact, when this does not hold, $\cq(A)$ fails dramatically to be a $C^*$-algebra.

\begin{theorem}\label{thm:cqequiv}
Let $A$ be a $C^*$-algebra. The following conditions are equivalent:
\begin{itemize}
\item[(i)] $A/J_{wc}(A)$ is abelian.
\item[(ii)] $\cq(A)=Z(A)+J_{wc}(A)$.
\item[(iii)] $\cq(A)$ is closed under addition.
\item[(iv)] $\cq(A)$ is closed under multiplication. 
\item[(v)] $\cq(A)$ is norm-closed.
\end{itemize}
\end{theorem}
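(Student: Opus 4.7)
Set $J := J_{wc}(A)$. I take (i) $\Rightarrow$ (ii) $\Rightarrow$ (iii), (iv), (v) as the easy directions and close the circle by proving the contrapositive of each remaining implication.

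For (i) $\Rightarrow$ (ii): when $A/J$ is abelian, $Z(A/J) = A/J$, so for any $a \in \cq(A)$ the hypothesis $a + J \in Z(A/J)$ of the CQ-property is automatic and yields $a \in Z(A) + J$. Combined with $Z(A) + J \subseteq \cq(A)$ from Corollary \ref{corsumsub}, this gives equality. For (ii) $\Rightarrow$ (iii), (iv), (v), I verify that $Z(A) + J$ is a norm-closed $*$-subalgebra: trivially closed under addition; under multiplication because $(z_1 + j_1)(z_2 + j_2) = z_1 z_2 + (z_1 j_2 + j_1 z_2 + j_1 j_2) \in Z(A) + J$; and norm-closed since $(Z(A) + J)/J \cong Z(A)/(Z(A) \cap J)$ is a $C^*$-algebra, hence closed in $A/J$, pulling back to closedness of $Z(A) + J$ in $A$.

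For the converses I argue contrapositively: if $A/J$ is non-abelian, each of (iii), (iv), (v) fails. Since $J = \ker T_A$, the embedding $A/J \hookrightarrow \prod_{M \in T_A} A/M$ forces some $M \in T_A$ with $A/M$ non-abelian (hence a simple unital non-abelian $C^*$-algebra). I concentrate on the principal case $M \in T_A^2$ with partner $N \in T_A$ (so $M \cap Z(A) = N \cap Z(A)$ and $Z(A) \not\subseteq M, N$); the case $M \in T_A^1$ is handled analogously with the identity of $A/M$ playing the role of the distinguishing scalar. Maximality gives $M + N = A$, hence the Chinese remainder theorem yields $A/(M \cap N) \cong A/M \oplus A/N$, and I fix a non-scalar $b \in A/M$, additionally invertible for the multiplicative construction (achieved by replacing $b$ with $b + \mu 1$ for $|\mu|$ larger than the spectral radius).

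\emph{For (iii)}, lift $v_1, v_2 \in A$ with $(v_1 + M, v_1 + N) = (b, 1_{A/N})$ and $(v_2 + M, v_2 + N) = (\lambda 1_{A/M} - b, 0)$ for some $\lambda \neq 1$. Since each $v_i + M$ is non-scalar, Theorem \ref{thmcqel} ensures $v_i \notin V_A^2$ through $(M, N)$; with the lift chosen carefully (using Theorem \ref{thmlift} to control scalar-valued behaviour on the other Glimm classes) one arranges $v_1, v_2 \in \cq(A)$, yet $(v_1 + v_2) + M = \lambda$ and $(v_1 + v_2) + N = 1$ are distinct scalars, placing $v_1 + v_2 \in V_A^2 \subseteq V_A$. \emph{For (iv)}, the prescription $(v_1 + M, v_1 + N) = (b, 0)$ and $(v_2 + M, v_2 + N) = (b^{-1}, 0)$ gives $v_1 v_2$ with distinct scalar images $1$ and $0$ at $M$ and $N$. \emph{For (v)}, let $u \in A$ be a lift of $(1_{A/M}, 0)$ (so $u \in V_A$) and pick $y \in A$ with $y + M$ non-scalar; then $u + \tfrac{1}{n} y \to u$ and each term has non-scalar $M$-image, keeping it in $\cq(A)$ after the same global adjustment.

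\textbf{Main obstacle.} The delicate step in each construction is verifying that the built elements truly lie in $\cq(A)$ globally, not merely outside $V_A^2$ for the single pair $(M, N)$. Overcoming this requires combining Theorem \ref{thmcqel} with the lifting criterion of Theorem \ref{thmlift} to choose lifts whose scalar-valued behaviour is constant on each Glimm class of modular maximal ideals where scalar values occur, preventing any other pair in $T_A^2$ from witnessing $V_A^2$-membership.
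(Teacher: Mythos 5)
Your easy directions (i) $\Rightarrow$ (ii) $\Rightarrow$ (iii), (iv), (v) match the paper's argument. The problem is the converse direction, where you have correctly identified the central obstacle --- guaranteeing that your constructed elements $v_1, v_2, u+\tfrac{1}{n}y$ lie in $\cq(A)$ \emph{globally}, i.e.\ avoid $V_A^1$ and $V_A^2$ for \emph{every} witness, not just the chosen pair $(M,N)$ --- but you have not actually overcome it. Your proposed remedy, ``choose lifts whose scalar-valued behaviour is constant on each Glimm class of modular maximal ideals where scalar values occur,'' is a restatement of the goal rather than a construction. Theorem \ref{thmlift} cannot deliver this: it characterises when an element that is \emph{already central} modulo an ideal lifts to a central element, whereas your $v_i$ are non-scalar at $M$ by design, and the set of modular maximal ideals at which a given lift happens to take a scalar value is not something you can prescribe in advance --- it depends on the lift in an uncontrollable way, and $T_A$ may be large and topologically complicated. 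Without a concrete mechanism forcing every scalar value of $v_i$ at a member of $T_A$ to vanish (or at least to be consistent across each Glimm class), the claim $v_i \in \cq(A)$ is unproved, and this is the entire content of the hard direction.

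The paper resolves exactly this difficulty by a localisation-plus-algebraic-form trick that your proposal lacks. One lifts a nilpotent $\dot q$ of index $2$ from $A/M$ to a nilpotent $q\in A$ of index $2$, passes to the ideal $I$ corresponding to the open set $U=\{P:\|q+P\|>0\}$, and works with the elements $x_\mu := a(a+\mu q)$ where $a\in I$ is self-adjoint with $a+(M\cap I)=1$. The point is that for \emph{any} modular maximal ideal $N'=N\cap I$ of $I$, if $x_\mu + N'$ were a nonzero scalar $\lambda$, then $a+N$ would be invertible and $q+N$ would be a normal nilpotent, hence zero --- contradicting $N\in U$. So every scalar value that $x_\mu$ takes at a modular maximal ideal of $I$ is forced to be $0$, which rules out membership in $V_I^1\cup V_I^2$ by Theorem \ref{thmcqel} with no further case analysis; yet $x_{-1}+x_1=2a^2$ and $x_{-1}x_1$ and $\lim_k x_{1/k}=a^2$ all map to nonzero scalars at $M\cap I$. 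This single device handles all three of (iii), (iv), (v) simultaneously and is the idea your argument is missing. If you want to salvage your approach you would need to replace the vague ``careful lifting'' by some comparable structural constraint on the lifts.
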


\begin{remark}\label{rem:non-ab}
Since $T_A$ is dense in $\P^{J_{wc}(A)}(A)$, it follows from \cite[Proposition~ 3.6.3]{DixB} that $A/J_{wc}(A)$ is non-abelian if and only if there is $M \in T_A$ such that $\dim(A/M)>1$.
\end{remark}

\begin{proof}[Proof of Theorem \ref{thm:cqequiv}]
(i) $\Longrightarrow$ (ii). Assume that $A/J_{wc}(A)$ is abelian. By Corollary \ref{corsumsub} we already know that $Z(A)+J_{wc}(A)\subseteq \cq(A)$, so it suffices to show the reverse inclusion. For any $a \in A$ we have $a+J_{wc}(A)\in A/J_{wc}(A)=Z(A/J_{wc}(A))$,
so if $a \in \cq(A)$, this forces $a \in Z(A)+J_{wc}(A)$. Therefore $\cq(A) = Z(A)+J_{wc}(A)$, as claimed. 

\smallskip

(ii) $\Longrightarrow$ (iii), (iv), (v) is trivial, since $Z(A)+J_{wc}(A)$ is a $C^*$-subalgebra of $A$.

\smallskip 

(iii), (iv) or (v) $\Longrightarrow$ (i). Assume that $A/J_{wc}(A)$ is non-abelian. By Remark \ref{rem:non-ab} there is $M \in T_A$ such that $\dim(A/M)>1$. We  show that $\cq(A)$ is not norm-closed and is neither closed under addition nor closed under multiplication.  As $A/M$ is non-abelian, by \cite[Exercise~4.6.30]{KR} $A/M$ contains a nilpotent element $\dot{q}$ of nilpotency index $2$. By \cite[Proposition~2.8]{AGP} (see also \cite[Theorem~6.7]{OP}), we may lift $\dot{q}$ to a nilpotent element $q \in A$ of the same nilpotency index $2$. As the norm function $\P(A)\ni P \mapsto \|q+P\|$ is lower semi-continuous on $\P(A)$ (see e.g. \cite[Proposition~II.6.5.6 (iii)]{Bla}) and $q \notin M$, the set
\begin{equation}\label{eq:normnilq}
U:=\{P \in \P(A) : \, \|q+P\|>0 \} 
\end{equation} 
is an open neighbourhood of $M$ in $\P(A)$. As $T_A=T_A^1 \cup T_A^2$ we have two possibilities.

\smallskip

\emph{Case 1.} $M \in T_A^1$, so that $Z(A)\subseteq M$. Let $I$ be the ideal of $A$ that corresponds to $U$, so that $U=\P_I(A)$. As $\cq(I)=I \cap \cq(A)$ (Proposition \ref{propcq} (c)), it suffices to show that $\cq(I)$ is not norm-closed and is neither closed under addition nor closed under multiplication. 

By Lemma \ref{lemmaxJ}, $M \cap I$ is a modular maximal ideal of $I$ that  contains $Z(I)=I \cap Z(A)$, so that $M \cap I \in T_I^1$.  Choose a self-adjoint element $a \in I$ such that
\begin{equation}\label{eq:aplusm}
a+(M \cap I)=1_{I/(M\cap I)}.
\end{equation} 
For each non-zero scalar $\mu \in \C$ consider the element 
$$x_\mu:=a\left(a+\mu q\right)\in I.$$ 
We claim that for any $N' \in \M(I)$, $x_\mu+N' \in Z(I/N')$ implies $x_\mu \in N'$, so that $x_\mu \in \cq(I)$ (Theorem \ref{thmcqel}).  Indeed, assume there is $N' \in \M(I)$ such that $x_\mu+N' \in Z(I/N')$. By Lemma \ref{lemmaxJ} there exists $N \in \M_I(A)$ such that $N'=N \cap I$. Then
$$
x_\mu+(N \cap I) = \lambda 1_{I/(N\cap I)}
$$
for some scalar $\lambda$, so using the canonical isomorphism $I/(N\cap I)\cong A/N$ we get 
\begin{equation}\label{eq:xkquot}
(a+N)\left((a+N)+ \mu (q+N)\right)=x_\mu+N=\lambda 1_{A/N}.
\end{equation}
Suppose that $\lambda \neq 0$. Then, by (\ref{eq:xkquot}), the element $a+N$ is right invertible in $A/N$. Since $a+N$ is self-adjoint, it must be invertible in $A/N$.
As $\mu \neq 0$,  (\ref{eq:xkquot}) implies 
\begin{equation}\label{eq:qplusn}
q+N=\frac{1}{\mu}\left(\lambda (a+N)^{-1}-(a+N)\right).
\end{equation} 
The right side in (\ref{eq:qplusn}) defines a normal element of $A/N$, as a linear combination of two commuting self-adjoint elements of $A/N$. Hence, $q+N$ is a normal nilpotent element of $A/N$ which implies $q\in N$.
But as $N \in \M_I(A)$, $N$ belongs to $U$, which contradicts (\ref{eq:normnilq}). Thus $\lambda = 0$ and so $x_\mu \in \cq(I)$ as claimed. 

We claim that 
$$x_{-1}+x_{1} \notin \cq(I) \qquad \mbox{and} \qquad x_{-1}x_1 \notin \cq(I).$$ Indeed, by (\ref{eq:aplusm}) we have
\begin{eqnarray*}
x_{-1}+x_{1} + (M \cap I) &=& (a(a-q)+(M \cap I)) + (a(a+q)+ (M \cap I)) = 2a^2 + (M \cap I)\\
&=&2 1_{I/(M\cap I)}.  
\end{eqnarray*}
Further, since $q^2=0$, we have
\begin{eqnarray*}
x_{-1}x_1+(M\cap I)&=&(a(a-q)+(M\cap I))(a(a+q)+(M\cap I))\\
&=&(1_{I/(M\cap I)}-(q+(M\cap I)))(1_{I/(I\cap M)}+(q+(M\cap I)))\\
&=& 1_{I/(M\cap I)}.
\end{eqnarray*}
Therefore, both $x_{-1}+x_{1}$ and $x_{-1}x_1$ belong to $V_I^1 \subseteq V_I=I\setminus \cq(I)$ (Theorem \ref{thmcqel}), which shows that $\cq(I)$ is neither closed under addition nor closed under multiplication. 

It remains to show that  $\cq(I)$ is not norm-closed. In order to do this, consider the sequence 
$$y_k:=x_{\frac{1}{k}}=a\left(a + \frac{1}{k}q\right) \qquad (k \in \N).$$
Then $(y_k)$ is a sequence in $\cq(I)$ that converges to $a^2$. As  $a^2 + (M \cap I)=1_{I/(M\cap I)}$ (by (\ref{eq:aplusm})),  we conclude that $a^2 \in V_I^1 \subseteq V_I$  (Theorem \ref{thmcqel}), so the proof for this case is finished. 

\smallskip

\emph{Case 2.} $M \in T_A^2$. Then $Z(A)\nsubseteq M$ and there exists $N \in \M(A)$ such that $N \neq M$ and $M \cap Z(A)=N\cap Z(A)$.
As singleton subsets of $\M(A)$ are closed in $\P(A)$, $U':=U\setminus \{N\}$ is also an open  neighbourhood of $M$ in $\P(A)$. Let $J$ be the ideal of $A$ that corresponds to $U'$. Then, by Lemma \ref{lemmaxJ}, $M\cap J \in \M(J)$ and $\dim(J/(M\cap J))=\dim(A/M)>1$. Further, since $N \notin U'$, $J \subseteq N$, so 
$$Z(J)=(N \cap Z(A)) \cap J = (M \cap Z(A)) \cap J.$$
This implies $Z(J)\subseteq M \cap J$ and therefore $M \cap J \in T_{J}^1$. Also, by (\ref{eq:normnilq}), we have trivially $\|q+P\|>0$ for all $P \in U'$. By applying the method of Case 1 to $J$ in place of $I$, we conclude that $\cq(J)$ is not norm-closed and is neither closed under addition nor closed under multiplication. As $\cq(J)=J \cap \cq(A)$ (Proposition \ref{propcq} (c)), the same is true for $\cq(A)$.  
\end{proof}

\begin{remark}
Observe that Theorem \ref{thm:cqequiv} applies to Example \ref{ex:Dix}, giving $\cq(A) =\C 1 +\KK(\H)$.
\end{remark}

\begin{corollary}\label{2-subcq}
If $A$ is a $2$-subhomogeneous $C^*$-algebra, then $\cq(A)=Z(A)+J_{wc}(A)$.
\end{corollary}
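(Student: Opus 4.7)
The plan is to combine Theorem \ref{thm:cqequiv} with Remark \ref{rem:non-ab}: it suffices to show that $A/J_{wc}(A)$ is abelian, which in turn amounts to verifying that every $M \in T_A$ satisfies $\dim(A/M) = 1$. Since $A$ is $2$-subhomogeneous and each $A/M$ $(M \in \M(A))$ is a simple unital $C^*$-algebra, $A/M$ is isomorphic to $\C$ or $\MM_2(\C)$; the task therefore reduces to excluding $M \in T_A$ with $A/M \cong \MM_2(\C)$.

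The key auxiliary object is the ideal
$$I_2 := \ker\{P \in \P(A) : \dim(A/P) = 1\},$$
the intersection of the kernels of the one-dimensional irreducible representations of $A$. Since $A$ is $2$-subhomogeneous, every irreducible representation of $I_2$ has dimension exactly $2$, so $I_2$ is a $2$-homogeneous (hence non-zero) ideal of $A$. By Example \ref{exqc}(b), $I_2$ is quasi-central with Hausdorff primitive ideal space, so by Remark \ref{remcent} it is in fact central. Under the canonical homeomorphism $\P_{I_2}(A) \cong \P(I_2)$, $P \mapsto P \cap I_2$, the open subset $\{P \in \P(A) : A/P \cong \MM_2(\C)\}$ corresponds to $\P(I_2)$.

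Now suppose, for contradiction, that $M \in T_A$ with $A/M \cong \MM_2(\C)$, so that $M \cap I_2 \in \P(I_2)$. If $M \in T_A^1$, then $Z(A) \subseteq M$ yields $Z(I_2) = I_2 \cap Z(A) \subseteq M \cap I_2$, contradicting the quasi-centrality of $I_2$. If instead $M \in T_A^2$, choose $N \in \M(A)$ with $N \neq M$, $Z(A) \not\subseteq N$, and $M \cap Z(A) = N \cap Z(A)$, and then split on whether $I_2 \subseteq N$. In the case $I_2 \not\subseteq N$, both $M \cap I_2$ and $N \cap I_2$ lie in $\P(I_2)$ and are distinct (by injectivity of $P \mapsto P \cap I_2$), yet
$$(M \cap I_2) \cap Z(I_2) = (M \cap Z(A)) \cap I_2 = (N \cap Z(A)) \cap I_2 = (N \cap I_2) \cap Z(I_2),$$
violating the centrality of $I_2$. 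In the case $I_2 \subseteq N$, we have $Z(I_2) \subseteq N \cap Z(A) = M \cap Z(A) \subseteq M$, so $Z(I_2) \subseteq M \cap I_2 \in \P(I_2)$, again contradicting the quasi-centrality of $I_2$.

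The main obstacle is the $T_A^2$ case: a Glimm-partner $N$ of an $\MM_2$-quotient ideal $M$ could \emph{a priori} belong either to $\P_{I_2}(A)$ or to $\P^{I_2}(A)$, and the two possibilities must be excluded using the two separate structural properties of $I_2$, namely centrality (when $I_2 \not\subseteq N$) and quasi-centrality (when $I_2 \subseteq N$).
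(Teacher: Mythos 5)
Your proof is correct. You construct exactly the same key object as the paper, namely the ideal $I_2=\ker\{P\in\P(A):\dim(A/P)=1\}$, and establish the same key structural fact, that $I_2$ is $2$-homogeneous and hence central by Example \ref{exqc}(b) and Remark \ref{remcent}. Where you diverge is in how you convert this into the abelianness of $A/J_{wc}(A)$: the paper simply observes that a central ideal is weakly central, hence $I_2\subseteq J_{wc}(A)$ by Theorem \ref{thmJcq}, and then $A/J_{wc}(A)$ is a quotient of the abelian algebra $A/I_2$; you instead invoke Remark \ref{rem:non-ab} and run a case-by-case contradiction showing no $M\in T_A$ can satisfy $I_2\not\subseteq M$. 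Your case analysis (splitting on $M\in T_A^1$ versus $T_A^2$, and within the latter on whether the partner $N$ contains $I_2$) is in effect a re-derivation, specialised to $J=I_2$, of Lemma \ref{lem:cqcont}, which states that $\ker T_A$ contains \emph{every} weakly central ideal and whose proof proceeds by exactly this kind of splitting. So your argument is sound but does by hand what the general machinery of Lemma \ref{lem:cqcont} and Theorem \ref{thmJcq} already provides; citing those results would collapse your second and third paragraphs to two lines.
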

\begin{proof}
Since 
$$\{P \in \P(A): \, \dim(A/P)=1\}$$ 
is a closed subset of $\P(A)$ (see e.g. \cite[Proposition~3.6.3]{DixB}), $A$ has a $2$-homogeneous ideal $I$ such that $A/I$ is abelian. Then $I$ is a central $C^*$-algebra (Remark \ref{remcent}) and so $I \subseteq J_{wc}(A)$. Hence $A/J_{wc}(A)$ is abelian and so the result follows from Theorem \ref{thm:cqequiv}.
\end{proof}

In the case when a $C^*$-algebra $A$ is postliminal or an AF-algebra, we also show that the conditions (i)-(v) of Theorem \ref{thm:cqequiv} are equivalent to one additional condition.

\begin{corollary}\label{cor:postlim}
If $A$ is a postliminal $C^*$-algebra or an AF-algebra, then the conditions (i)-(v) of Theorem \ref{thm:cqequiv} are also equivalent to:
\begin{itemize}
\item[(vi)] For any $x \in \cq(A)$, $x^n \in \cq(A)$ for all $n \in \N$. 
\end{itemize}
\end{corollary}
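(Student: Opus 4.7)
The implication (ii) $\Rightarrow$ (vi) is immediate, since if $\cq(A) = Z(A) + J_{wc}(A)$ then $\cq(A)$ is a $C^*$-subalgebra of $A$ and hence closed under taking $n$-th powers. For the substantive direction (vi) $\Rightarrow$ (i), I would proceed by contrapositive, constructing $x \in \cq(A)$ with $x^k \notin \cq(A)$ for some $k \geq 2$ whenever $A/J_{wc}(A)$ is non-abelian. By Remark \ref{rem:non-ab} there is $M \in T_A$ with $\dim(A/M) > 1$; arguing as in Case 2 of the proof of Theorem \ref{thm:cqequiv}, I first pass to a suitable ideal $J$ of $A$ to ensure $M \cap J \in T_J^1$ with $J/(M \cap J) \cong A/M$. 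Since postliminality and the AF property pass to ideals, and since $\cq(J) = J \cap \cq(A)$ by Proposition \ref{propcq}(c), producing an element $x \in \cq(J)$ with $x^k \notin \cq(J)$ suffices; so I may effectively assume $Z(A) \subseteq M$.

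My plan is to produce a single commutator $y = [c,d] \in A/M$ lying inside a unital finite-dimensional $\ast$-subalgebra $D \subseteq A/M$ and satisfying $y^k = 1_{A/M}$ for some $k \geq 2$. Lifting $c, d$ to $\tilde c, \tilde d \in A$ and setting $x := [\tilde c, \tilde d]$, one has $x \in \cq(A)$ by Proposition \ref{prop:commutators}(a), while $x^k + M = y^k = 1_{A/M}$ is a non-zero scalar. Because $Z(A) \subseteq M$, Theorem \ref{thmcqel} forces $x^k \in V_A^1 \subseteq V_A$, so $x^k \notin \cq(A)$, contradicting (vi).

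Producing $y$ is easy in the postliminal case: Remark \ref{rem:postlimmax} gives $A/M \cong \MM_n(\C)$ with $n \geq 2$, so I take $D := A/M$ and $y := \mathrm{diag}(\zeta, \zeta^2, \ldots, \zeta^n)$ with $\zeta = e^{2\pi i/n}$; this element is traceless, hence a single commutator in $\MM_n$ by the classical theorem of Shoda, and satisfies $y^n = I_n$. In the AF case, given a unital finite-dimensional $\ast$-subalgebra $D = \bigoplus_i \MM_{d_i} \subseteq A/M$ with every $d_i \geq 2$, setting $y := \bigoplus_i \mathrm{diag}(\zeta_i, \ldots, \zeta_i^{d_i})$ with $\zeta_i = e^{2\pi i/d_i}$ and $k := \mathrm{lcm}(d_i)$ works: each component is traceless and hence a commutator in its $\MM_{d_i}$-summand by Shoda, so $y$ is a single commutator in $D$, while $y^k = 1_D = 1_{A/M}$.

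The main obstacle is to produce such a $D$ in the AF case when $A/M$ is infinite-dimensional and $[1]$ fails to be divisible in $K_0(A/M)$, so that no unital $\MM_d$ embeds for $d \geq 2$ (e.g.\ for the Fibonacci AF algebra). Nevertheless, the Bratteli diagram still supplies suitable direct sums: for Fibonacci, the unital subalgebras $\MM_{F_n} \oplus \MM_{F_{n-1}}$ already work for $n \geq 3$. In general, I plan an inductive construction exploiting simplicity: if $p$ is the minimal central projection of a unital finite-dimensional subalgebra of $A/M$ corresponding to an $\MM_1$-summand, then $p$ cannot be minimal in the infinite-dimensional simple $A/M$ (else $A/M$ would be a finite matrix algebra), so $p(A/M)p$ is itself a non-abelian simple unital AF algebra; combining a unital finite-dimensional $\ast$-subalgebra of $p(A/M)p$ having no $\MM_1$-summand (produced by iteration) with the $\MM_{d \geq 2}$-summands of the original subalgebra yields the desired $D$.
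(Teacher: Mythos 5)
Your overall architecture is the paper's: reduce the case $M\in T_A^2$ to the case $Z\subseteq M$ by passing to an ideal (the paper uses $N$ itself and Lemma \ref{lemmaxJ}), then lift a commutator $x=[\tilde c,\tilde d]$ with $x^k+M=1_{A/M}$, so that $x\in\cq(A)$ by Proposition \ref{prop:commutators}(a) while $x^k\in V_A^1$ (or $V_A^2$) by Theorem \ref{thmcqel}. The roots-of-unity construction inside a co-unital finite-dimensional subalgebra with no abelian summand, using Shoda's theorem that trace-zero matrices are single commutators, is exactly the paper's Lemma \ref{lem:postmaxqot}, and the postliminal case via Remark \ref{rem:postlimmax} is identical.

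The genuine gap is in the AF case, and it sits precisely at the point you identify as ``the main obstacle'': the existence, in a unital simple non-abelian AF-algebra $C$, of a co-unital finite-dimensional $*$-subalgebra with no abelian summand. Your proposed construction is circular. You take a co-unital finite-dimensional $B\subseteq C$, isolate a minimal central projection $p$ carrying an abelian summand, observe (correctly, though this itself needs an argument via $C=\overline{CpC}$ and Morita equivalence with $\C$) that $pCp$ is again a unital simple non-abelian AF-algebra, and then propose to find a co-unital finite-dimensional subalgebra of $pCp$ with no abelian summand ``by iteration''. But that is exactly the statement you are trying to prove, now for $pCp$ instead of $C$, and there is no decreasing parameter or base case: the corner $pCp$ is another infinite-dimensional simple unital AF-algebra of the same type, so the recursion never bottoms out. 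The paper closes this gap with a separate lemma (Lemma \ref{lem:AFcounit}): if every algebra $B_k$ in a generating nest had an abelian summand, each would carry a multiplicative state, and a weak$^*$-limit of extensions of these would be a multiplicative state on $C$, contradicting simplicity plus non-abelianness. (An alternative repair, closer in spirit to your Fibonacci example, is via the Bratteli diagram: by simplicity some level $m$ is fully connected to a level $k$ with $\dim B_k>1$, whence every block of $B_m$ has size at least $\sum_i d_i\geq 2$.) Without some such argument, condition (vi)~$\Rightarrow$~(i) is not established for AF-algebras.
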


In the proof of Corollary \ref{cor:postlim} we shall use the next two facts. In the sequel we say that a $C^*$-subalgebra $A$ of a unital $C^*$-algebra $B$ is \emph{co-unital} if $A$ contains the identity of $B$.

\begin{lemma}\label{lem:AFcounit}
Let $B$ be a unital simple non-abelian AF-algebra. Then $B$ contains a co-unital finite-dimensional $C^*$-subalgebra with no abelian summand.
\end{lemma}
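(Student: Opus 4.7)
The plan is to argue according to whether $B$ is finite-dimensional. If $B$ is finite-dimensional, then, being unital and simple, $B\cong\MM_n(\C)$ for some $n\geq 1$; non-abelianness forces $n\geq 2$, and $B$ itself is the required subalgebra. Assume then that $B$ is infinite-dimensional, let $G:=K_0(B)$ with order unit $u:=[1_B]$, and recall that for a simple unital AF-algebra $(G,G_+,u)$ is a simple dimension group; infinite-dimensionality rules out $(G,u)=(\Z,n)$, so $G\neq\Z u$.

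The aim is to construct a co-unital copy of $\MM_2(\C)\oplus\MM_3(\C)$ inside $B$. In $K_0$-terms this amounts to finding $a,b\in G_+\setminus\{0\}$ with $u=2a+3b$, because then the AF-structure of $B$ yields mutually orthogonal projections $p_{1,1},p_{1,2},p_{2,1},p_{2,2},p_{2,3}\in B$ with $[p_{1,j}]=a$, $[p_{2,j}]=b$, and $\sum_{i,j}p_{i,j}=1_B$ (using the Riesz decomposition property of $G$ together with the fact that $K_0$-classes of AF-algebras are realised by projections), and matrix units obtained from these projections and partial isometries witnessing the Murray-von Neumann equivalences generate the desired subalgebra. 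By B\'ezout's identity $1=-2+3$, the equation $2a+3b=u$ always admits solutions in $G$, with general form $a=-u+3c$, $b=u-2c$ for $c\in G$, and the positivity conditions $a,b>0$ together amount to the single condition $u/3<c<u/2$ in $G$.

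Such $c$ will be produced by a density argument. The natural embedding $\iota:G\to\mathrm{Aff}(T(B))$, $\iota(v)(\tau):=\tau(v)$ (with $T(B)$ the Choquet simplex of tracial states on $B$), has dense image for simple unital infinite-dimensional AF-algebras --- this goes back to the Effros-Handelman-Shen representation of simple dimension groups. Since the order on $G$ is detected by $T(B)$, any $c\in G$ whose image $\iota(c)$ lies in a sufficiently small sup-norm neighbourhood of the constant affine function $5/12$ satisfies $\tau(c)\in(1/3,1/2)$ for every $\tau\in T(B)$, which is precisely $u/3<c<u/2$ in $G$. The main technical step is this density statement: in the single-trace case it simply reduces to density of a non-cyclic subgroup of $\R$, but in the multi-trace case one must invoke simple dimension group theory to rule out pathological images of $\iota$.
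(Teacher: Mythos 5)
Your argument is correct, but it is a genuinely different route from the paper's. The paper's proof is elementary and non-constructive: it takes any increasing filtration $(B_k)$ of co-unital finite-dimensional subalgebras and shows that if every $B_k$ had a $\C$-summand, then the corresponding multiplicative states on the $B_k$ would have a weak$^*$-cluster point which is a multiplicative state on all of $B$, contradicting simplicity and non-abelianness; the only tools are the definition of an AF-algebra and weak$^*$-compactness of $\S(B)$. You instead produce an explicit co-unital copy of $\MM_n(\C)$ (finite-dimensional case) or $\MM_2(\C)\oplus\MM_3(\C)$ (infinite-dimensional case), which is a stronger and more concrete conclusion, but at the cost of substantial machinery: that $(K_0(B),K_0(B)_+,[1_B])$ is a noncyclic simple dimension group when $B$ is infinite-dimensional (which implicitly uses that $(\Z,\Z_+,n)$ only arises from $\MM_n(\C)$), that positivity in a simple dimension group is detected by states, that $T(B)$ realises $S(K_0(B),[1_B])$ for unital AF-algebras, the realisation of $K_0$-data by orthogonal projections and partial isometries in $B$, and above all the density of the image of $K_0(B)$ in $\mathrm{Aff}(S(K_0(B),[1_B]))$ for noncyclic simple dimension groups. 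That last density statement, which you rightly flag as the main technical step, is a genuine theorem (Effros' CBMS notes on dimension groups, or Goodearl's book on partially ordered abelian groups with interpolation) rather than a direct consequence of the Effros--Handelman--Shen representation, so it deserves a precise citation; the B\'ezout trick $u=2a+3b$ for circumventing divisibility is a nice touch. In short: your proof works and yields more, but the paper's two-paragraph compactness argument is considerably more economical for the stated lemma.
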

\begin{proof}
Let $(B_k)_{k \in \N}$ be an increasing sequence of finite-dimensional $C^*$-subalgebras of $B$ such that $1_{B}\in B_k$ for all $k \in \N$ and 
$$B=\overline{\bigcup_{k \in \N} B_k}.$$ 
We claim that there exists $k  \in \N$ such that $B_k$ has no direct summand $*$-isomorphic to $\C$. On a contrary, suppose that for every $k \in \N$, $B_k$ has a direct summand $*$-isomorphic to $\C$ and hence a multiplicative state $\omega_k$. For each $k \in  \N$ let $\psi_k\in \S(B)$ be an extension of $\omega_k$. By the weak$^*$-compactness of $\S(B)$ there exists $\psi \in \S(B)$ and a subnet $(\psi_{k(\alpha)})$ of $(\psi_k)$ such that
$$\psi =w^*-\lim_{\alpha}\psi_{k(\alpha)}.$$
Let $a \in B_{j_0}$ and $b \in B_{k_0}$ for some $j_0,k_0 \in \N$. There exists an index  $\alpha_0$ such that $k(\alpha)\geq \max\{j_0,k_0\}$ for all $\alpha \geq \alpha_0$. Thus 
$$
\psi_{k(\alpha)}(ab)=\psi_{k(\alpha)}(a)\psi_{k(\alpha)}(b)
$$
for all $\alpha \geq \alpha_0$ and so 
\begin{equation}\label{eq:multst}
\psi(ab)=\psi(a)\psi(b).
\end{equation} 
Now suppose that $a \in B_{j_0}$ for some $j_0 \in \N$, that $b \in B$ and that $\varepsilon>0$. Then there exists $k_0 \in \N$ and $b_0 \in B_{k_0}$ such that 
$$\|b-b_0\|< \frac{\varepsilon}{2(1+\|a\|)}.$$
Then
\begin{eqnarray*}
|\psi(ab)-\psi(a)\psi(b)| &\leq & |\psi(ab)-\psi(ab_0)|+ |\psi(a)\psi(b_0)-\psi(a)\psi(b)|\\
&\leq& 2 \|a\|\|b-b_0\|< \varepsilon.
\end{eqnarray*}
Thus, again (\ref{eq:multst}) holds. A similar approximation in the first variable shows that  (\ref{eq:multst}) holds for all $a,b \in B$. Thus, $B$ has a multiplicative state, contradicting the fact that $B$ is simple but not $*$-isomorphic to $\C$.
\end{proof}

\begin{lemma}\label{lem:postmaxqot}
Let $A$ be a $C^*$-algebra such that for some $M \in \M(A)$, $A/M$ contains a co-unital finite-dimensional $C^*$-subalgebra with no abelian summand.
Then there are $a,b \in A$ and an integer $n >1$  such that
\begin{equation}\label{eq:lemlift1}
[a,b]^n+M=1_{A/M}.
\end{equation}
\end{lemma}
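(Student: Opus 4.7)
The plan is to construct the required elements first inside the quotient $A/M$, exploiting the finite-dimensional structure of the co-unital subalgebra, and then lift them back via the quotient map $q : A \to A/M$.

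First, since $B$ is a finite-dimensional $C^*$-algebra with no abelian summand, the Wedderburn decomposition yields
\[
B \;\cong\; \bigoplus_{\alpha=1}^{k} \MM_{n_\alpha}(\C) \quad \text{with each } n_\alpha \geq 2,
\]
and the co-unitality hypothesis gives $1_B = 1_{A/M}$. This is the only place the hypotheses on $B$ are used; from here the argument is purely matricial.

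Next, in each block I would exhibit an explicit commutator of finite order. Consider the cyclic shift
\[
C_\alpha \;:=\; e_{12} + e_{23} + \cdots + e_{n_\alpha - 1,\, n_\alpha} + e_{n_\alpha, 1} \;\in\; \MM_{n_\alpha}(\C),
\]
which satisfies $C_\alpha^{n_\alpha} = I_{n_\alpha}$ and has zero diagonal. Put $X_\alpha := \di(1, 2, \ldots, n_\alpha)$. Since $X_\alpha$ has pairwise distinct eigenvalues, the formula $[X_\alpha, Y]_{ij} = (i - j) Y_{ij}$ shows that $\mathrm{ad}\, X_\alpha$ surjects onto the subspace of zero-diagonal matrices; hence there exists $Y_\alpha \in \MM_{n_\alpha}(\C)$ with $[X_\alpha, Y_\alpha] = C_\alpha$.

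Combining blocks, set $x := \bigoplus_\alpha X_\alpha$ and $y := \bigoplus_\alpha Y_\alpha$ inside $B \subseteq A/M$, and let $n := \mathrm{lcm}(n_1, \ldots, n_k)$, which is $\geq 2$ since every $n_\alpha \geq 2$. Then
\[
[x, y]^n \;=\; \bigoplus_{\alpha} [X_\alpha, Y_\alpha]^n \;=\; \bigoplus_{\alpha} C_\alpha^n \;=\; \bigoplus_{\alpha} I_{n_\alpha} \;=\; 1_B \;=\; 1_{A/M}.
\]
Finally, choose any $a, b \in A$ with $q(a) = x$ and $q(b) = y$. Then $[a,b]^n + M = [q(a), q(b)]^n = [x, y]^n = 1_{A/M}$, yielding (\ref{eq:lemlift1}).

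There is no substantial obstacle; the work is elementary linear algebra. The hypotheses enter only to guarantee that (i) each Wedderburn block of $B$ has size at least $2$, so that every block admits a cyclic-shift commutator of order exceeding $1$, and (ii) the identity of $B$ coincides with $1_{A/M}$, so that the direct-sum identity is precisely what appears on the right-hand side of (\ref{eq:lemlift1}).
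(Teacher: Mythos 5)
Your proof is correct and follows essentially the same route as the paper: Wedderburn decomposition of $B$, a commutator in each block $\MM_{n_\alpha}(\C)$ whose $n_\alpha$-th power is the identity, and the least common multiple of the block sizes. The only (cosmetic) difference is that the paper takes the diagonal matrix of $n_\alpha$-th roots of unity and invokes the fact that trace-zero matrices are commutators, whereas you use the unitarily equivalent cyclic shift and exhibit the commutator explicitly via $\mathrm{ad}\,X_\alpha$.
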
 
\begin{proof}
Assume that $B$ is a co-unital finite-dimensional $C^*$-subalgebra of $A/M$ with no abelian summand. Then there are  integers $n_1, \ldots , n_k>1$ and a $*$-isomorphism  $\phi : B \to \MM_{n_1}(\C) \oplus \cdots \oplus \MM_{n_k}(\C)$. For each $i=1, \ldots, k$ let $\{\alpha_1^{(i)}, \ldots, \alpha^{(i)}_{n_i}\}$ be the set of all $n_i$-th roots of unity. It is well-known that the set of all commutators of $\MM_{n_i}(\C)$ consists precisely of all matrices of trace zero. Hence, as $\alpha_1^{(i)} + \ldots + \alpha^{(i)}_{n_i} =0$ for all $i=1, \ldots, k$, there are elements $a,b \in A$ such that $a+M,b+M \in B$ and  
$$\phi([a,b]+M)=\di(\alpha^{(1)}_1,\ldots, \alpha^{(1)}_{n_1}) \oplus \cdots \oplus \di(\alpha^{(k)}_1,\ldots, \alpha^{(k)}_{n_k}).$$ 
Let $n$ be the least common multiple of $n_1, \ldots, n_k$. Then 
$$\phi([a,b]+M)^n=1_{\MM_{n_1}(\C)} \oplus \cdots \oplus 1_{\MM_{n_k}(\C)}.$$  Since $B$ is co-unital in $A/M$, this is equivalent to (\ref{eq:lemlift1}).
\end{proof}

\begin{proof}[Proof of Corollary \ref{cor:postlim}]
By Theorem \ref{thm:cqequiv} we only have to prove the implication (vi) $\Longrightarrow$ (i). Assume that (i) does not hold. By Remark \ref{rem:non-ab} there is $M \in T_A$ such that $\dim(A/M)>1$. If $A$ is postliminal or AF (respectively), then $A/M$ is a unital simple $C^*$-algebra that is postliminal or AF (respectively). Hence, by Remark \ref{rem:postlimmax} and Lemma \ref{lem:AFcounit} $A/M$ certainly contains a co-unital finite-dimensional $C^*$-subalgebra with no abelian summand. As $T_A=T_A^1\cup T_A^2$ we have two possibilities.

\emph{Case 1.} $M \in T_A^1$, so that $Z(A)\subseteq M$. By Lemma \ref{lem:postmaxqot}, there are $a,b \in A$ and an integer $n>1$ such that for $x:=[a,b]$ we have $x^n+M=1_{A/M}$. 
In particular $x^n \in V_A^1$ so, by Theorem  \ref{thmcqel}, $x^n \notin \cq(A)$. On the other hand, by Proposition \ref{prop:commutators} (a), $x \in \cq(A)$. 

\smallskip

\emph{Case 2.} $M \in T_A^2$. Then $Z(A)\nsubseteq M$ and there exists $N \in \M(A)$ such that $N \neq M$ and $M \cap Z(A)=N\cap Z(A)$.
By Lemma \ref{lemmaxJ}, $M\cap N$ is a modular maximal ideal of $N$ and $N/(M\cap N) \cong A/M$. 

By Lemma \ref{lem:postmaxqot} (applied to $N$) there are 
$a,b \in N$ and an integer $n>1$ such that for $x:=[a,b]$ we have $x^n+M\cap N=1_{N/(M\cap N)}$.  
Then, using the canonical isomorphism $N/(M\cap N) \cong A/M$, we get 
$x^n+M=1_{A/M}$.  As $x^n \in N$, we have $x^n \in V^2_A$, so $x^n \notin \cq(A)$ by Theorem  \ref{thmcqel}. On the other hand, by Propositions \ref{prop:commutators} (a) and \ref{propcq} (c), $x \in \cq(N)\subseteq \cq(A)$.
\end{proof}

The next example shows that $2$-subhomogeneity in Corollary \ref{2-subcq} cannot be replaced by $n$-subhomogeneity, where $n>2$. It also provides an example of a liminal $C^*$-algebra for which the six equivalent conditions of Corollary \ref{cor:postlim} fail to hold. 
\begin{example}\label{ex3sub}
Let $A$ be the $C^*$-algebra consisting of all functions $a \in C([0,1],\MM_3(\C))$
such that 
$$a(1)=\begin{pmatrix}
\lambda_{11}(a) & \lambda_{12}(a) & 0 \\
\lambda_{21}(a) & \lambda_{22}(a) & 0 \\
0 & 0 & \mu(a)
\end{pmatrix},$$
for some complex numbers $\lambda_{ij}(a), \mu(a)$ ($i,j=1,2$). Then $A$ is a unital $3$-subhomogeneous $C^*$-algebra such that
$$Z(A)=\{\di(f,f,f) : \, f \in C([0,1])\}$$
and
$$T_A=T_A^2=\{\ker \pi, \ker \mu\},$$ 
where $\pi : A \to \MM_2(\C)$ and $\mu : A \to \C$ are irreducible representations of $A$ defined by the assignments
$\pi : a \mapsto (\lambda_{ij}(a))$ and $\mu : a \mapsto \mu(a)$. 
Hence, by Theorem \ref{thmJcq}, 
$$J_{wc}(A)=\ker T_A=\{a \in A: \, a(1)=0\}$$
and so 
$$Z(A)+J_{wc}(A)=\{a \in A : \, a(1) \mbox{ is a scalar matrix}\}.$$
As $A/\ker \pi \cong \MM_2(\C)$, it follows from Theorem \ref{thm:cqequiv} and the proofs of Lemma \ref{lem:postmaxqot} and Corollary \ref{cor:postlim} that $\cq(A)$ is not closed under addition and is not norm-closed, and there is $x \in \cq(A)$ such that $x^2 \notin \cq(A)$. To show this explicitly, first by Theorem \ref{thmcqel} we have
$$V_A=A \setminus \cq(A)=\{a \in A  : \exists \lm, \mu \in \C, \, \lm \neq \mu, \mbox{ such that } a(1)=\di(\lm, \lm, \mu)\}.$$
In particular, $\cq(A)$ strictly contains $Z(A)+J_{wc}(A)$. Let $b:=\di(1,0,0)$ and $c:=\di(0,1,0)$ be elements of $A$, considered as constant functions. Then, $b,c \in \cq(A)$, but $b+c=\di(1,1,0)\notin \cq(A)$. Similarly,  the constant function $x:=\di(-1,1,0)$ belongs to $\cq(A)$ but $x^2=\di(1,1,0)$ does not.

We now show that $\cq(A)$ is not norm-closed in $A$. In fact, we shall show that $\cq(A)$ is norm-dense in $A$, so as $A$ is not weakly central, $\cq(A)$ cannot be norm-closed (for a more general argument see Proposition \ref{propcqden}). Choose any $a \in A \setminus \cq(A)$. Then $a(1)=\di(\lm,\lm,\mu)$ for some distinct scalars $\lm$ and  $\mu$. For any $\ep >0$, let $b_\ep:=\di(\ep,0,0)$ (as a constant function in $A$). Then $a+b_\ep \in \cq(A)$ and $\|(a+b_\ep)-a\|=\|b_\ep\|=\ep$. 
\end{example}

We now demonstrate that Corollary \ref{cor:postlim} can fail when $A$ is not assumed to be postliminal or an AF-algebra. In order to do this, first recall that a $C^*$-algebra $B$ is said to be \emph{projectionless} if $B$ does not contain non-trivial projections. The first example of a simple projectionless $C^*$-algebra was given by Blackadar \cite{Bla1} (the non-unital example) and 
\cite{Bla2} (the unital example). Also, the prominent examples of simple projectionless $C^*$-algebras include the reduced $C^*$-algebra $C_r^*(\mathbb{F}_n)$ for the free group $\mathbb{F}_n$ on $n< \infty$ generators \cite{PV} and the Jiang-Su algebra $\mathcal{Z}$ \cite{JiSu}, which also has the important property that it is KK-equivalent to $\C$.  

\begin{lemma}\label{lem:seppoly}
Let $B$ be a unital  projectionless $C^*$-algebra and let $p \in \C[z]$ be a separable polynomial. An element $b \in B$ satisfies $p(b)=0$ if and only if $b=\mu 1$, where $\mu$ is a root of $p$.  
\end{lemma}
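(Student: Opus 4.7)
The reverse direction is immediate: if $b = \mu 1$ with $p(\mu) = 0$, then $p(b) = p(\mu) 1 = 0$. So the content is in the forward direction.

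The plan is to use Lagrange interpolation on the distinct roots of $p$ to extract idempotents from $b$, and then invoke the projectionless hypothesis to conclude that all but one of these idempotents is zero. Write $\mu_1, \ldots, \mu_n$ for the distinct roots of $p$ and set
$$e_i(z) := \prod_{j \neq i} \frac{z - \mu_j}{\mu_i - \mu_j} \qquad (1 \leq i \leq n).$$
Since these are the Lagrange interpolating polynomials at the nodes $\mu_1, \ldots, \mu_n$, the polynomials $\sum_i e_i - 1$ and $e_i e_j - \delta_{ij} e_i$ vanish at every $\mu_k$ and therefore are divisible by $p$. Likewise $(z - \mu_i) e_i(z) = c_i p(z)$ for a scalar $c_i$.

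Applying the functional calculus for polynomials and using $p(b) = 0$, the elements $q_i := e_i(b) \in B$ satisfy
$$q_1 + \cdots + q_n = 1, \qquad q_i q_j = \delta_{ij} q_i, \qquad (b - \mu_i 1) q_i = 0.$$
Thus the $q_i$ are pairwise orthogonal idempotents summing to $1_B$. Now I would invoke the standard fact that in any $C^*$-algebra every idempotent is similar to a self-adjoint projection (given $q = q^2$, the element $h := 1 + (q-q^*)(q^*-q)$ is positive and invertible, and $q' := qq^*h^{-1}$ is a projection similar to $q$). Because $B$ is projectionless, the only projections in $B$ are $0$ and $1$, so the only idempotents in $B$ are $0$ and $1$. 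Since the $q_i$ are mutually orthogonal and sum to $1$, precisely one of them—say $q_{i_0}$—equals $1$, and the remaining $q_i$ are zero. From $(b - \mu_{i_0} 1) q_{i_0} = 0$ we conclude $b = \mu_{i_0} 1$.

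The only step that requires a non-trivial appeal is the similarity of idempotents to projections in a $C^*$-algebra, and this is a standard result; everything else is polynomial bookkeeping and the definition of projectionless. So there is essentially no hard part, but the cleanest presentation is to list the three properties of the $e_i$ first and then read them off after substituting $b$.
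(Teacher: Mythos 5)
Your proof is correct, but it takes a genuinely different route from the paper's. The paper first establishes that every element of a unital projectionless $C^*$-algebra has connected spectrum: if $\sigma(b)$ were disconnected, the holomorphic functional calculus would produce a non-trivial idempotent, hence (by the same similarity-to-a-projection fact you invoke, cited there as \cite[Proposition~4.6.2]{BlaKT}) a non-trivial projection. It then applies the spectral mapping theorem to $p(b)=0$ to get $\sigma(b)\subseteq\{\mu_1,\dots,\mu_n\}$, concludes $\sigma(b)=\{\mu_k\}$ by connectedness, and finishes by inverting the factors $b-\mu_i 1$ for $i\neq k$ in the factorisation $0=p(b)=\alpha(b-\mu_1 1)\cdots(b-\mu_n 1)$. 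Your Lagrange-interpolation argument instead manufactures the idempotents explicitly by polynomial algebra, bypassing the holomorphic functional calculus and the spectral mapping theorem entirely; the only non-algebraic input is the similarity of idempotents to projections. What the paper's route buys is the reusable intermediate fact that all spectra in $B$ are connected; what yours buys is a more elementary argument that would work verbatim in any unital algebra with no non-trivial idempotents. Your polynomial identities all check out: each of the stated differences vanishes at every root of $p$, and since $p$ is separable it is (up to the leading coefficient) the product of the distinct linear factors $z-\mu_k$, so divisibility by $p$ follows. The final bookkeeping is also sound: each $q_i$ is $0$ or $1$, orthogonality forbids two of them being $1$, and $\sum_i q_i=1$ forces exactly one to be $1$, whence $(b-\mu_{i_0}1)q_{i_0}=0$ gives $b=\mu_{i_0}1$.
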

\begin{proof}
First note that since $B$ is projectionless, all elements of $B$ have connected spectrum. Indeed, otherwise by \cite[Corollary~3.3.7]{KR} $B$ would contain a non-trivial idempotent $e$ and then by \cite[Proposition~4.6.2]{BlaKT}, $e$ would be similar to a (necessarily non-trivial) projection. 

If $p\in \C[z]$ is a separable polynomial of degree $n$, we can factorize
$$p(z)=\alpha (z-\mu_1) \cdots (z-\mu_n),$$
where $\alpha \in \C \setminus \{0\}$ and $\mu_1, \ldots, \mu_n \in \C$ are distinct roots of $p$. If $b \in B$ satisfies $p(b)=0$, then the spectral mapping theorem implies $\s(b)\subseteq \{\mu_1, \ldots, \mu_n\}$. As $\s(b)$ is connected,  this forces $\s(b)=\{\mu_k\}$ for some $1 \leq k \leq n$.
Then for all $i \in \{1, \ldots, n\}\setminus \{k\}$, the element $b-\mu_i 1$ is invertible so
$$0=p(b)=\alpha(b-\mu_1 1)\cdots (b-\mu_n 1)$$
implies $b=\mu_k 1$ as claimed. The converse is trivial. 
\end{proof}

\begin{example}\label{ex:JiSutp}
Let $B$ be any unital simple projectionless non-abelian $C^*$-algebra (e.g. $B=\Z$, the Jiang-Su algebra). 

Consider the $C^*$-algebra $C$ of all continuous functions $x: [0,1]\to \MM_2(B) $ such that $x(1)=\di(b(x),0)$ for some $b(x)\in B$ (note that $C$ can be identified with the tensor product 
$A \otimes B$, where $A$ is the $C^*$-algebra from Example \ref{ex2subnqc}, which is nuclear). As $B$ is unital and simple, $Z(B)=\C1_B$, so 
$$Z(C)=\{\di(f 1_B, f 1_B) : \ f \in C([0,1]), \, f(1)=0\},$$
where $(f 1_B)(t)=f(t) 1_B$, for all $t \in [0,1]$. Consider the ideal $M$ of $C$ defined by 
$$
M:=\{x \in C : \, x(1)=0\}=C_0([0,1), \MM_2(B)).
$$
As $C/M \cong B$, $M$ is a modular maximal ideal of $C$ that contains $Z(C)$, so that $M \in T_C^1$. Since $Z(M) \cong C_0([0,1))$ and $\P(M)$ is canonically homeomorphic to $[0,1)$, it is easy to check directly that $M$ is a central $C^*$-algebra (alternatively, $M\cong C_0([0,1))\otimes \MM_2(B)$ is weakly central by Theorem \ref{thm:tenprod}). Therefore,
$$\qquad J_{wc}(C)=M \qquad \mbox{and} \qquad T_{C}=T_{C}^1=\{M\}.$$
By Theorem \ref{thmcqel} we have
$$\cq(C)=\{x \in C : \, b(x) \mbox{ is not a non-zero scalar}\}.$$
As $C/J_{wc}(C)\cong B$ is non-abelian, by Theorem \ref{thm:cqequiv} $\cq(C)$ is not norm-closed and is neither closed under addition nor closed under multiplication.

On the other hand we claim that for any $x \in \cq(C)$, $x^n \in \cq(C)$ for all $n \in \N$. On a contrary, assume that there exists $x \in \cq(C)$ such that $x^n \notin \cq(C)$ for some $n>1$. Then, by Theorem \ref{thmcqel}, there is a non-zero $\lambda \in \C$ such that $b(x)^n=\lambda 1_B$. Consider the polynomial $p(z):=z^n - \lambda$. As  $\lambda \neq 0$, $p$ is separable. Since $B$ is projectionless and $p(b(x))=0$,  Lemma \ref{lem:seppoly} implies that $b(x)=\mu 1_B$, where $\mu$ is some $n$-th root of $\lambda$. But this contradicts the fact that $x \in \cq(C)$.
\end{example}

If a unital $C^*$-algebra $A$ is not weakly central then, even though $\cq(A)$ might be a $C^*$-subalgebra of $A$ (and hence equal to $Z(A)+J_{wc}(A)$ by Theorem \ref{thm:cqequiv}), one may use matrix units to show that $\cq(\MM_2(A))$ is neither closed under addition nor closed under multiplication (for the algebraic counterpart, see the comment following \cite[Remark~3.6]{BG}). In fact, this is a special case of the following more general result.

\begin{proposition}\label{prop:tenprod}
Let $A$ be a unital $C^*$-algebra and let $B$ be a unital simple exact $C^*$-algebra. 
\begin{itemize}
\item[(a)] $J_{wc}(A\otimes_{\min} B)=J_{wc}(A)\otimes_{\min} B$.
\item[(b)] Suppose that $A$ is not weakly central and that $B$ is not abelian (that is, $B$ is not $*$-isomorphic to $\C$). Then $\cq(A\otimes_{\min} B)$ is not norm-closed and is neither closed under addition nor closed under  multiplication. In particular, $\cq(\MM_n(A))$ is not a $C^*$-subalgebra of $\MM_n(A)$ for any $n>1$.
\end{itemize}
\end{proposition}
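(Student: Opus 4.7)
Set $C := A \otimes_{\min} B$. The whole argument rests on the rigid ideal structure of $C$: since $B$ is exact and simple, Kirchberg's slice map property (SMP) ensures that the assignment $I \mapsto I \otimes_{\min} B$ is a lattice isomorphism between the closed ideals of $A$ and the closed ideals of $C$, with canonical quotient $C/(I \otimes_{\min} B) \cong (A/I) \otimes_{\min} B$. I would begin (a) by assembling three consequences of this, together with simplicity of $B$ and the Haydon--Wassermann formula \cite{HW}. First, since $B$ is simple unital, $\P(B)$ is a singleton and Dauns--Hofmann gives $Z(B) = \C 1_B$, hence $Z(C) = Z(A) \otimes \C 1_B$, which I identify with $Z(A)$ via $z \mapsto z \otimes 1_B$. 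Secondly, the minimal tensor product of two simple unital $C^*$-algebras is simple unital, so $\M(C) = \{M \otimes_{\min} B : M \in \M(A)\}$. Thirdly, a direct slice-map computation shows that $Z(C) \subseteq M \otimes_{\min} B$ iff $Z(A) \subseteq M$, and that $(M_1 \otimes_{\min} B) \cap Z(C) = (M_2 \otimes_{\min} B) \cap Z(C)$ iff $M_1 \cap Z(A) = M_2 \cap Z(A)$.

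Together these give $T_C = \{M \otimes_{\min} B : M \in T_A\}$. To finish (a) via Theorem \ref{thmJcq}, I would compute
\begin{equation*}
\ker T_C \;=\; \bigcap_{M \in T_A} (M \otimes_{\min} B) \;=\; \Bigl(\bigcap_{M \in T_A} M\Bigr) \otimes_{\min} B \;=\; (\ker T_A) \otimes_{\min} B,
\end{equation*}
where the key middle equality is the SMP: for $x \in C$ and an ideal $I \subseteq A$, one has $x \in I \otimes_{\min} B$ iff the slice $(\mathrm{id}_A \otimes \varphi)(x)$ lies in $I$ for every $\varphi \in B^*$, which passes to intersections of ideals. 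As an independent sanity check, Theorem \ref{thm:tenprod} already gives the inclusion $\supseteq$: both $J_{wc}(A)$ and the simple $B$ have the CQ-property, so their minimal tensor product has the CQ-property, is therefore weakly central by Corollary \ref{corCQWC}, and so sits inside $J_{wc}(C)$.

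Part (b) is then essentially formal. By (a) and the ideal-quotient identification,
\begin{equation*}
C/J_{wc}(C) \;\cong\; (A/J_{wc}(A)) \otimes_{\min} B.
\end{equation*}
Since $A$ is not weakly central, $J_{wc}(A) \neq A$, so $A/J_{wc}(A)$ is a non-zero unital $C^*$-algebra. The right-hand side therefore contains the unital subalgebra $1 \otimes B$ and is non-abelian because $B$ is. Theorem \ref{thm:cqequiv} now delivers all three failures of closure for $\cq(C)$. The final assertion about $\MM_n(A)$ follows by taking $B = \MM_n(\C)$, which is unital, simple, exact (even nuclear) and non-abelian whenever $n > 1$.

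The main obstacle is the intersection formula used to identify $\ker T_C$: it is precisely here that the exactness of $B$ is used, in the guise of the SMP. Without exactness, $C$ could admit ``exotic'' ideals not of the form $I \otimes_{\min} B$, and both the correspondence $T_A \leftrightarrow T_C$ and the required intersection identity could collapse, breaking the bridge from $J_{wc}(A)$ to $J_{wc}(C)$.
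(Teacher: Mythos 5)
Your proposal is correct in substance and, for the key computations, follows the same route as the paper: the Haydon--Wassermann identification $Z(C)=Z(A)\otimes\C 1_B$, Takesaki's theorem to see that $(A/M)\otimes_{\min}B$ is simple (so $M\otimes_{\min}B\in\M(C)$), slice maps plus exactness to get $\bigcap_{M\in T_A}(M\otimes_{\min}B)=\bigl(\bigcap_{M\in T_A}M\bigr)\otimes_{\min}B$, and Theorem \ref{thm:cqequiv} applied to $C/J_{wc}(C)\cong (A/J_{wc}(A))\otimes_{\min}B$ for part (b). The one genuine divergence is that you claim the full equalities $\M(C)=\{M\otimes_{\min}B:M\in\M(A)\}$ and $T_C=\{M\otimes_{\min}B:M\in T_A\}$, which require knowing that \emph{every} closed (equivalently, every maximal) ideal of $A\otimes_{\min}B$ is of the form $I\otimes_{\min}B$. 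That is true for simple exact $B$, but it is a strictly stronger statement than Kirchberg's slice-map identity $F(I,B)=I\otimes_{\min}B$: the slice-map property gives $J\subseteq I_J\otimes_{\min}B$ for the ideal $I_J$ generated by the slices of $J$, while the reverse containment $I_J\otimes_{\min}B\subseteq J$ needs a separate argument exploiting simplicity of $B$ (or a citation such as Blanchard--Kirchberg). The paper deliberately avoids this: it proves only the inclusion $\{M\otimes_{\min}B:M\in T_A\}\subseteq T_C$, which needs no classification of the ideals of $C$ and already yields $J_{wc}(C)=\ker T_C\subseteq J_{wc}(A)\otimes_{\min}B$, and then obtains the reverse inclusion from Theorem \ref{thm:tenprod} together with Corollary \ref{corCQWC} (weak centrality of $J_{wc}(A)\otimes_{\min}B$). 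Since you supply exactly this latter argument as your ``sanity check,'' your proof is complete without the lattice-isomorphism claim; if you wish to keep the full equality of $T_C$ as the main route, you should either cite the ideal-lattice result explicitly or supply the missing half of its proof, as the SMP alone does not deliver it.
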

\begin{proof}
(a) If $A$ is weakly central then, since $B$ is weakly central, we have that
$A\otimes_{\min} B$ is weakly central (see \cite[Theorem~3.1]{Arc2} and Theorem \ref{thm:tenprod}). So we now assume that $A$ is not weakly central, so that $T_A \neq \emptyset$. Let $M \in T_A$. Then there is $N \in \M(A)$ such that $N \neq M$ and $M\cap Z(A)=N \cap Z(A)$. Since $B$ is exact, 
$$\frac{A \otimes_{\min} B}{M \otimes_{\min} B} \cong \frac{A}{M} \otimes_{\min} B,$$
which is a simple $C^*$-algebra (see \cite[Corollary]{Tak}). Thus $M \otimes_{\min} B \in \M(A \otimes_{\min} B)$ and similarly  $N \otimes_{\min} B \in \M(A \otimes_{\min} B)$. Let $x \in (M \otimes_{\min} B)\cap (Z(A)\otimes \C1_B)$. For a state $\omega\in \S(B)$ let $L_\omega : A \otimes_{\min} B \to A$ be the corresponding left slice map (i.e. $L_\omega(a \otimes b)=\omega(b)a$, see \cite{Wass}). There exists $z \in Z(A)$ such that $x=z \otimes 1_{B}$ and hence $z=L_\omega(x)\in M$. Thus
\begin{eqnarray*}
(M \otimes_{\min} B) \cap (Z(A)\otimes \C1_B) &=& (M \cap Z(A)) \otimes \C 1_B =  (N \cap Z(A)) \otimes \C 1_B \\
&=&(N \otimes_{\min} B) \cap (Z(A)\otimes \C1_B).
\end{eqnarray*}
Note that also $M \otimes_{\min} B \neq N \otimes_{\min} B$ (for otherwise, by using $L_\omega$, we would obtain $M \subseteq N$ and $N \subseteq M$). Since by \cite[Corollary~1]{HW}, $Z(A)\otimes \C 1_B =Z(A\otimes_{\min} B)$, we have shown that $M \otimes_{\min} B \in T_{A\otimes_{\min} B}$.  By Theorem \ref{thmJcq}
\begin{equation}\label{eq:tenpr}
J_{wc}(A \otimes_{\min} B) \subseteq \bigcap_{M \in T_A} (M \otimes_{\min} B)=J_{wc}(A)\otimes_{\min} B.
\end{equation}
For the equality in (\ref{eq:tenpr}), let $y \in \bigcap_{M \in T_A} (M \otimes_{\min} B)$ and $\psi \in B^*$. Then 
$$L_\psi(y)\in \bigcap_{M \in T_A} M =J_{wc}(A).$$
Hence
$$0 = q(L_{\psi}(y)) = {\mathcal L}_{\psi}((q \otimes \mathrm{id}_B)(y)),$$
where $q: A \to A/J_{wc}(A)$ is the canonical map and ${\mathcal L}_{\psi}: (A/J_{wc}(A))  \otimes_{\min} B \to A/J_{wc}(A)$ is the left slice map. It follows that 
$$y \in \ker (q \otimes  \mathrm{id}_B) = J_{wc}(A) \otimes_{\min} B,$$ 
since $B$ is exact.

On the other hand, it follows from Theorem \ref{thm:tenprod} and Corollary \ref{corCQWC} that $J_{wc}(A) \otimes_{\min} B$ is weakly central. Thus $J_{wc}(A\otimes_{\min} B)=J_{wc}(A)\otimes_{\min} B$, as claimed.

\smallskip

(b) Since $B$ is exact, by (a)
$$\frac{A \otimes_{\min} B}{J_{wc}(A \otimes_{\min} B)}=\frac{A \otimes_{\min} B}{J_{wc}(A)\otimes_{\min} B}\cong \frac{A}{J_{wc}(A)} \otimes_{\min} B,$$
which is non-abelian. The result now follows from Theorem \ref{thm:cqequiv}.
\end{proof}

\smallskip

In contrast to the second paragraph of Remark \ref{rem:fin} we now demonstrate there are even separable continuous-trace $C^*$-algebras $A$ such that $Z(A)=J_{wc}(A)=\{0\}$, while $\cq(A)$ is norm-dense in $A$.  In order to do this, we shall use the following facts.

\begin{lemma}\label{lemma:primdense}
Let $A$ be a $C^*$-algebra such that all primitive ideals of $A$ are maximal and both sets of all modular and non-modular primitive ideals are dense in $\P(A)$. Then $Z(A)=J_{wc}(A)=\{0\}$.
\end{lemma}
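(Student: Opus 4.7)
The plan is to handle the two statements $Z(A)=\{0\}$ and $J_{wc}(A)=\{0\}$ separately, with the first feeding into the second. Neither direction requires any new machinery; everything is a direct combination of the hypotheses with facts recorded in the excerpt.

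For $Z(A)=\{0\}$, I would simply invoke the second half of Remark \ref{nmod}: every non-modular primitive ideal contains $Z(A)$, and the set of such ideals is assumed dense in $\P(A)$. Hence $Z(A)$ is contained in every primitive ideal of $A$, and so $Z(A)\subseteq \ker\P(A)=\{0\}$.

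For $J_{wc}(A)=\{0\}$ the plan is to identify $T_A$ explicitly using $Z(A)=\{0\}$ and then apply Theorem \ref{thmJcq}. Since all primitive ideals of $A$ are maximal by hypothesis, $\M(A)$ coincides with the set of modular primitive ideals of $A$. With $Z(A)=\{0\}$ in hand, every $M\in \M(A)$ trivially contains $Z(A)$, so $T_A^1=\M(A)$, while $T_A^2$ is empty because its defining condition $Z(A)\nsubseteq M$ can never hold. Therefore $T_A=\M(A)$, which is dense in $\P(A)$ by hypothesis, and consequently
\[
\ker T_A \subseteq \bigcap_{P\in\P(A)} P \;=\;\{0\}.
\]
Theorem \ref{thmJcq} then gives $J_{wc}(A)=\ker T_A=\{0\}$.

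There is no genuine obstacle here; the only conceptual point is to notice that the density assumption on non-modular primitive ideals does the heavy lifting for $Z(A)=\{0\}$, which in turn forces $T_A^2=\emptyset$ and reduces $T_A$ to the (dense) set of modular primitive ideals.
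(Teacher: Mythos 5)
Your proof is correct, and it takes a genuinely different route to $J_{wc}(A)=\{0\}$ than the paper does. You compute the set $T_A$ explicitly: since $Z(A)=\{0\}$ (obtained, as in the paper, from Remark \ref{nmod} and the density of the non-modular primitive ideals), the condition $Z(A)\nsubseteq M$ is vacuous, so $T_A^2=\emptyset$ and $T_A^1=\M(A)$, which by the maximality hypothesis is exactly the (dense) set of modular primitive ideals; density then gives $\ker T_A=\{0\}$ and Theorem \ref{thmJcq} finishes the argument. The paper instead never touches $T_A$: it takes an arbitrary non-zero ideal $I$, notes $Z(I)=I\cap Z(A)=\{0\}$, uses density of the modular primitive ideals to find one, $P$, with $I\nsubseteq P$, and then applies Lemma \ref{lemmaxJ} (using maximality of $P$) to produce $P\cap I\in\M(I)$ containing $Z(I)$, so that $I$ violates condition (a) of Definition \ref{defwc} and cannot be weakly central. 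The paper's argument is the more elementary one, relying only on Lemma \ref{lemmaxJ} and the definition of $J_{wc}(A)$ as the largest weakly central ideal, and it exposes directly \emph{why} each ideal fails; yours is shorter and arguably cleaner given that the formula $J_{wc}(A)=\ker T_A$ of Theorem \ref{thmJcq} is already available at this point in the paper, and it makes transparent how each density hypothesis is consumed (non-modular for $Z(A)=\{0\}$, modular for $\ker T_A=\{0\}$). Both are complete proofs.
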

\begin{proof}
That $Z(A)=\{0\}$ follows from Remark \ref{nmod}. Let $I$ be a non-zero ideal of $A$. Then $Z(I)= I\cap Z(A) =\{0\}$. On the other hand, the dense set of modular primitive ideals of $A$ meets the open set $\P_I(A)$. If $P$ is any modular primitive ideal of $A$ that does not contain $I$ then, by assumption, $P$ is maximal,  so by Lemma \ref{lemmaxJ} $P \cap I$ is a modular primitive ideal of $I$ such that $\{0\}=Z(I) \subseteq P \cap I$. Therefore, $I$ is not weakly central.
\end{proof}

\begin{proposition}\label{propcqden} 
Let $A$ be a $C^*$-algebra.
\begin{itemize}
\item[(a)] If either there is $M \in \M(A)$ of codimension $1$ such that $Z(A)\subseteq M$ or there are distinct $M_1,M_2 \in \M(A)$ of codimension $1$ that satisfy $M_1 \cap Z(A)=M_2 \cap Z(A)\neq Z(A)$, then  $\cq(A)$ is not norm-dense in $A$.
\item[(b)] The converse of (a) is true if $T_A$ is countable.
\end{itemize}
\end{proposition}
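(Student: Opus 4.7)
The plan for part (a) is to apply Theorem \ref{thmcqel} in each of the two cases to exhibit a proper closed subset of $A$ that contains $\cq(A)$. In the first case, if $M \in \M(A)$ has codimension $1$ and contains $Z(A)$, then $M \in T_A^1$, and for any $a \notin M$ the element $a+M$ is a non-zero scalar in $A/M \cong \C$, so $a \in V_A^1$. Hence $\cq(A) \subseteq M$, a proper ideal, so $\cq(A)$ is not dense. In the second case, with distinct $M_1, M_2 \in \M(A)$ of codimension $1$ satisfying $M_1 \cap Z(A) = M_2 \cap Z(A) \neq Z(A)$, both $M_1, M_2$ lie in $T_A^2$. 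Writing $a + M_i = \lambda_i(a) 1_{A/M_i}$ defines continuous linear functionals $\lambda_1, \lambda_2$ on $A$, and whenever $\lambda_1(a) \neq \lambda_2(a)$ one has $a \in V_A^2$ via the pair $M_1, M_2$. Thus $\cq(A) \subseteq \ker(\lambda_1 - \lambda_2)$, which is a proper closed hyperplane since $M_1 \neq M_2$ forces $\lambda_1 - \lambda_2 \neq 0$, and so $\cq(A)$ fails to be dense.

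For part (b), I would argue via the Baire category theorem. Assume that $T_A$ is countable and that both clauses of the hypothesis in (a) fail. By Theorem \ref{thmcqel}, $V_A = V_A^1 \cup V_A^2$ decomposes as a union indexed by $T_A^1$ (for $V_A^1$) together with pairs of distinct elements of $T_A^2$ having equal intersection with $Z(A)$ (for $V_A^2$). Since $T_A$ is countable, so is this decomposition. For each $M \in T_A^1$, the failure of the first clause of (a) forces $\dim(A/M) > 1$, so $\C 1_{A/M}$ is a proper subspace of $A/M$, and its preimage under the quotient map $A \to A/M$ is a proper closed subspace of the Banach space $A$; this preimage contains the contribution of $M$ to $V_A^1$, and any proper closed subspace of a Banach space has empty interior, so this contribution is nowhere dense. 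An analogous argument handles $V_A^2$: for a relevant pair $(M_1, M_2)$, the failure of the second clause of (a) forces at least one $M_i$ to have $\dim(A/M_i) > 1$, and the corresponding contribution to $V_A^2$ lies inside the preimage of $\C 1_{A/M_i}$, hence again in a proper closed subspace of $A$. Therefore $V_A$ is a countable union of nowhere dense sets, hence meager, and the Baire category theorem yields that $\cq(A) = A \setminus V_A$ is dense.

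The main obstacle is largely bookkeeping rather than conceptual: one must verify carefully that the failure of each clause of (a) precisely rules out codimension $1$ in the relevant quotients, and confirm that every $a \in V_A^2$ arising from a pair $(M_1,M_2)$ indeed lies in the preimage of $\C 1_{A/M_i}$ under the quotient $A \to A/M_i$. The automatic property $M_i \cap Z(A) \neq Z(A)$ for $M_i \in T_A^2$ ensures that the configurations arising in the Baire argument are precisely those excluded by failure of (a)'s clauses, so the topological reduction to a countable union of proper closed subspaces then concludes the proof.
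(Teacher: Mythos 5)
Your proof is correct, and part (a) matches the paper's argument exactly (the paper phrases the second case as $\cq(A)\subseteq \C 1+(M_1\cap M_2)$, which is your $\ker(\lambda_1-\lambda_2)$). For part (b) you use the same Baire-category strategy as the paper, but dualised and with a different elementary ingredient: the paper shows that the sets $U_M=\{a\in A:\ a+M \text{ is not a scalar in } A/M\}$, indexed by the $M\in T_A$ with $\dim(A/M)>1$, are open and dense --- the density being proved by lifting a non-central norm-one element of $A/M$ to a norm-one element of $A$ via \cite[Lemma~17.3.3]{WO} and perturbing --- and then intersects them to get a dense subset of $\cq(A)$. You instead show directly that the complements $q_M^{-1}(\C 1_{A/M})$ are proper closed linear subspaces of the Banach space $A$, hence nowhere dense, and conclude that $V_A$ is meager. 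Your observation buys a slightly more self-contained argument, avoiding the lifting lemma entirely; the paper's version of the density step is more in the spirit of its other quotient-lifting arguments but is otherwise equivalent. One small point worth making explicit in your write-up: in the definition of $V_A^2$ the condition $\lambda_1\neq\lambda_2$ already forces $M_1\neq M_2$ and places both $M_i$ in $T_A^2$, so the pairs you index over really do form a countable set when $T_A$ is countable (and in fact it suffices to take the union of $q_M^{-1}(\C 1_{A/M})$ over the single ideals $M\in T_A$ with $\dim(A/M)>1$, since every witnessing pair contains at least one such $M$).
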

\begin{proof}
(a) Assume there is $M \in \M(A)$ of codimension $1$ that contains $Z(A)$. Since $A/M\cong \C$, by Theorem \ref{thmcqel} for any $a \in \cq(A)$, $a+M$ is zero in $A/M$, so $a \in M$. Thus, $\cq(A)\subseteq M$, so $\cq(A)$ is clearly not norm-dense in $A$.

Alternatively, assume there are distinct $M_1,M_2 \in \M(A)$ of codimension $1$ such that $M_1 \cap Z(A)=M_2 \cap Z(A)\neq Z(A)$. Since $A/ (M_1 \cap M_2) \cong (A/M_1) \oplus (A/M_2) \cong \C \oplus \C$, Theorem \ref{thmcqel} implies $\cq(A)\subseteq \C 1+(M_1 \cap M_2)$, so $\cq(A)$ is not norm-dense in $A$.

(b) Now assume that all $M \in \M(A)$ that contain $Z(A)$ have codimension greater than $1$ and for all distinct $M_1,M_2 \in \M(A)$ that satisfy $M_1 \cap Z(A)=M_2 \cap Z(A)\neq Z(A)$, at least one $M_i$ has codimension greater than $1$. We may assume that $T_A\neq \emptyset$, for otherwise $\cq(A) = A$, which is certainly dense in $A$.

For each $M \in T_A$ such that $\dim(A/M)>1$, set 
$$U_M:=\{a \in A : \, a+M  \mbox{ is not a scalar in } A/M\}.$$
Evidently, $U_M$ is an open subset of $A$. We claim that $U_M$ is norm-dense in $A$. Let $a\in A \setminus U_M$, so that $a+M$ is a scalar in $A/M$. Let $\ep >0$. Since $A/M$ is non-abelian, there is a non-central element $\dot{b}$ of norm one in $A/M$. Then by \cite[Lemma~17.3.3]{WO}, there is a norm one element $b \in A$ such that $b+M=\dot{b}$. Then the element $a+ (\ep/2)b$ lies in  $U_M$ and its distance from $a$ is $\ep/2$.  

If $T_A$ is countable, then the Baire category theorem implies that 
$$U:=\bigcap\{U_M: \, M \in T_A, \, \dim(A/M)>1\}$$
is a dense subset of $A$. Let $a \in U$. If $M \in T^1_A$ then $a \in U_M$ and so $a+M$ is not a scalar in $A/M$. Also if  $M_1,M_2 \in \M(A)$, such that $M_1 \neq M_2$ and $M_1 \cap Z(A)=M_2 \cap Z(A)\neq Z(A)$, then for some $i \in \{1,2\}$ we have $\dim(A/M_i)>1$ so that $a\in U_{M_i}$ and hence $a+M_i$ is not a scalar in $A/M_i$. Thus, by Theorem \ref{thmcqel},  $U \subseteq \cq(A)$, so $\cq(A)$ is norm-dense in $A$.
\end{proof}

The next example is a slight variant of \cite[Example~4.4]{ArcTh} where we have changed the quotient $A(1)$ in order to avoid an abelian quotient.

\begin{example}\label{exRobTh}
Let $\H$ be a separable infinite-dimensional Hilbert space with orthonormal basis $\{e_n : \, n \geq 0\}$. For each $n$ let $E_n$ be the projection from $\H$ onto the linear span of the set $\{e_0, e_1, \ldots, e_n\}$. We define
$A$ to be the subset of $C([0,1],\KK(\H))$ consisting of all elements $a \in C([0,1],\KK(\H))$ which satisfy the following requirement: For any dyadic rational $t=p/2^q\in [0,1)$, where $p,q$ are positive integers such that $2 \not | p$, then
$$a(t)=E_qa(t)=a(t)E_q.$$
Then $A$ is a closed self-adjoint subalgebra of $C([0,1],\KK(\H))$ and so is itself a $C^*$-algebra.

As in  \cite{ArcTh}, standard arguments show that $A$ is a continuous-trace $C^*$-algebra whose primitive ideal space can be identified with $[0,1]$, via the homeomorphism
$$[0,1]\ni t \mapsto P_t:=\ker \pi_t\in \P(A),$$ where for each $t \in [0,1]$ and $a \in A$, $\pi_t(a):=a(t)$. Moreover, if for each $t \in [0,1]$ we denote the fibre of $A$ at $t$ by $A(t)$ (i.e. $A(t)=\{a(t) : \, a \in A\}$), then
$$
A(t) =
\begin{cases}
 \{K \in \KK(\H) : \, E_qK=KE_q=K\}\cong \MM_{q+1}(\C), & \mbox{if } t=p/2^q \mbox{ as above} \\
 \KK(\H), & \mbox{otherwise}.
\end{cases}
$$
In particular, all primitive ideals of $A$ are maximal. Further, the sets of modular and non-modular primitive ideals of $A$ are both dense in $\P(A)$, and so Lemma \ref{lemma:primdense} implies $Z(A)=J_{wc}(A)=\{0\}$.
On the other hand, since
$$T_A=T_A^1=\{P_t : \, t  \in [0,1) \mbox{ is a dyadic rational}\}$$
is countable and the codimension of each $P_t \in T_A$ is larger than $1$, Proposition \ref{propcqden} implies that $\cq(A)$ is norm-dense in $A$. 
\end{example}

\section*{Funding}
The second-named author was fully supported by the Croatian Science Foundation under the project IP-2016-06-1046.


\begin{thebibliography}{99}

\bibitem{AGP} C.~A.~Akemann and G.~K.~Pedersen, \emph{Ideal perturbations of elements in $C^*$-algebras}, Math. Scand. \textbf{41} (1977), 117--139. 

\bibitem{AP} J.~Anderson, W.~Paschke, \emph{The rotation algebra}, Houston J. Math. \textbf{15} (1989), 1--26.

\bibitem{Arc2} R.~J.~Archbold, \emph{On commuting $C^*$-algebras of operators},  Math. Scand. {\bf 29} (1971), 106--114.

\bibitem{ArcTh} R.~J.~Archbold, \emph{Certain properties of operator algebras}, Ph.D. Thesis, University of Newcastle upon Tyne, 1972.

\bibitem{Arc} R.~J.~Archbold, \emph{Density theorems for the centre of a
$C^*$-algebra}, J. London Math. Soc. \textbf{10} (1975), 189--197.

\bibitem{Arc3} R.~J.~Archbold, \emph{On the centre of a tensor product of $C^*$-algebras}, J. London Math. Soc. \textbf{10} (1975), 257--262. 

\bibitem{Arc4} R.~J.~Archbold, \emph{On the norm of an inner derivation of a $C^*$-algebra}, Math. Proc. Camb. Phil. Soc. \textbf{84} (1978), 273--291.

\bibitem{ART} R.~J.~Archbold, L.~Robert and A.~Tikuisis, \emph{The Dixmier property and tracial states for $C^*$-algebras}, J. Funct. Anal. \textbf{273} (8) (2017), 2655--2718.


\bibitem{AS} R.~J.~Archbold and D.~W.~B.~Somerset, \emph{Quasi-standard
$C^*$-algebras}, Math. Proc. Cambridge Philos. Soc. \textbf{107} (1990), 349--360.

\bibitem{AS1} R.~J.~Archbold and D.~W.~B.~Somerset, \emph{Separation properties in the primitive ideal space of a multiplier algebra}, Israel J. Math. \textbf{200} (2014), 389--418.

\bibitem{Bla1} B.~Blackadar, \emph{A simple $C^*$-algebra with no nontrivial projections}, Proc. Amer. Math. Soc. \textbf{78} (1980), 504--508.

\bibitem{Bla2} B.~Blackadar, \emph{A simple unital projectionless $C^*$-algebra}, J. Oper. Theory \textbf{5} (1981),  63--71.

\bibitem{BlaKT} B.~Blackadar, \emph{K-Theory for Operator Algebras}, Cambridge University Press, 1998.

\bibitem{Bla}  B.~Blackadar, \emph{Operator algebras}, Encyclopaedia of Mathematical Sciences 122, Springer-Verlag, Berlin, 2006.

\bibitem{BLM} D.~P.~Blecher and C.~Le~Merdy, \emph{Operator algebras and Their modules}, Clarendon Press, Oxford, 2004.

\bibitem{BG} M. Bre\v sar and I. Gogi\'c, \emph{Centrally Stable Algebras},  J. of Algebra \textbf{537} (2019), 79--97.

\bibitem{Cho} M.~D.~Choi, \emph{The full $C^*$-algebra of the free group on two generators}, Pacific J. Math. \textbf{87} (1980), 41--48.

\bibitem{Del1} C.~Delaroche, \emph{Sur les centres des $C^*$-alg\`{e}bres}, Bull. Sc. Math. \textbf{91} (1967), 105--112.

\bibitem{Del2} C.~Delaroche, \emph{Sur les centres des $C^*$-alg\`{e}bres II}, Bull. Sc. Math. \textbf{92} (1968), 111--128.

\bibitem{Dix} J.~Dixmier, \emph{Les anneaux d'op\'erateurs de classe finie}, Ann. Sci. \'Ecole Norm. Sup. (3) \textbf{66} (1949), 209--261.

\bibitem{DixB} J.~Dixmier, \emph{$C^*$-algebras}, North-Holland, Amsterdam, 1977.

\bibitem{Fell} J.~M.~G.~Fell, \emph{The structure of algebras of operator fields}, Acta Math. \textbf{106} (1961), 233--280.

\bibitem{Gog} I.~Gogi\'c, \emph{Derivations which are inner as completely bounded maps}, Oper. Matrices \textbf{4} (2010), 193--211.

\bibitem{Gog2} I.~Gogi\'c, \emph{The local multiplier algebra of a $C^*$-algebra with finite dimensional irreducible representations}, J. Math. Anal. Appl. \textbf{408} (2013), 789--794. 

\bibitem{GGK} I.~Gohberg, S.~Goldberg and M.~A.~Kaashoek, \emph{Classes of Linear Operators}, Vol. II, Birkh\"{a}user Verlag, Basel, 1993.

\bibitem{HZ} U.~Haagerup and L.~Zsid\'o, \emph{Sur la propri\'et\'e de Dixmier pour les $C^*$-alg\`ebres}, C. R. Acad. Sci. Paris
S\'er. I Math. \textbf{298} (1984), 173--176.

\bibitem{HW} R.~Haydon and S.~Wassermann, \emph{A commutation result for tensor products of $C^*$-algebras}, Bull. London Math. Soc. \textbf{5} (1973), 283--287.

\bibitem{JiSu} X.~Jiang and H.~Su, \emph{On a simple unital projectionless $C^*$-algebra}, American J. Math. \textbf{121} (1999), 359--413. 

\bibitem{KR} R.~V.~Kadison and J.~R.~Ringrose, \emph{Fundamentals of the theory of operator algebras}, Vol. 1, Graduate Studies in Mathematics, Amer. Math. Soc. Providence, RI, 1997.

\bibitem{Kan} E.~Kaniuth, \emph{A Course in Commutative Banach Algebras}, Graduate Texts in Mathematics, Springer, 2009.

\bibitem{Kap1} I.~Kaplansky, \emph{Normed algebras}, Duke Math. J. \textbf{16} (1949), 399--418.

\bibitem{Kap2} I.~Kaplansky, \emph{The structure of certain operator algebras}, Trans. Amer. Math. Soc. \textbf{70} (1951), 219--255.

\bibitem{Lin} H.~Lin, \emph{An Introduction to the Classification of Amenable $C^*$-algebras}, World Scientific, New Jersey, London, Singapore, Hong Kong, 2001.

\bibitem{Lo} V.~Losert, \emph{On the center of group $C^*$-algebras}, J. reine angew. Math. \textbf{554} (2003), 105--138.

\bibitem{Mag} B.~Magajna, \emph{On weakly central $C^*$-algebras}, J. Math. Anal. Appl. \textbf{342} (2008), 1481--1484.

\bibitem{MN} Y.~Misonou and M.~Nakamura, \emph{Centering of an operator algebra}, T\^{o}hoku Math. J. (2) \textbf{3} (1951), 243--248.

\bibitem{Mis} Y.~Misonou, \emph{On a weakly central operator algebra}, T\^{o}hoku Math. J. (2) \textbf{4} (1952), 194--202.

\bibitem{OP} C.~L.~Olsen and G.~K.~Pedersen, \emph{Corona $C^*$-algebras and their applications to lifting problems}, Math. Scand. \textbf{64} (1989), 63--86.

\bibitem{PV}  M.~Pimsner and D.~Voiculescu, \emph{K-groups of reduced crossed products by free groups}, J. Oper. Theory \textbf{8} (1982), 131--156.

\bibitem{Pop} C.~Pop, \emph{Finite sums of commutators}, Proc. Amer. Math. Soc. \textbf{130} (2002), 3039--3041. 



\bibitem{RW} I.~Raeburn and D.~P.~Williams, \emph{Morita Equivalence and
Continuous-Trace $C^*$-Algebras}, Mathematical Surveys and Monographs 60, Amer. Math. Soc., Providence, RI, 1998.

\bibitem{Tak} M.~Takesaki, \emph{On the cross-norm of the direct product of $C^*$-algebras},  T\^ohoku Math. J. (2) \textbf{16} (1964), 111--122.

\bibitem{TT} J.~Tomiyama and M.~Takesaki, \emph{Applications of fibre bundles to the certain class of $C^*$-algebras}, T\^ohoku Math. J. (2) \textbf{13} (1961), 498--522.

\bibitem{Vest} J.~Vesterstr\o m, \emph{On the homomorphic image of the center of a $C^*$-algebra}, Math. Scand. \textbf{29} (1971), 134--136.

\bibitem{Wass} S.~Wassermann, \emph{The slice map problem for $C^*$-algebras}, Proc. London Math. Soc. \textbf{32} (1976), 537--559.

\bibitem{WO} N.~E.~Wegge--Olsen, \emph{$K$-Theory and $C^*$-Algebras - A Friendly Approach}, Oxford Univ. Press, Oxford, 1993.

\end{thebibliography}
\end{document}